\newcommand{\numberset}[1]{\ensuremath{\mathbb{#1}}}    %amsfonts
\newcommand{\C}{\numberset{C}}  % amsfonts
\newcommand{\Q}{\numberset{Q}} 
\newcommand{\R}{\numberset{R}}  % amsfonts
\newcommand{\Z}{\numberset{Z}}  % amsfonts
\newcommand{\PP}{\numberset{P}}  % amsfonts
\newcommand{\T}{\numberset{T}}
\newcommand{\FF}{\numberset{F}}
\newcommand{\F}{\mathcal F}
\newcommand{\inn}[2]{ \langle {#1}, {#2} \rangle}
\theoremstyle{definition}
\newtheorem{thm}{Theorem}[section]
\newtheorem{prop}[thm]{Proposition}
\newtheorem{lem}[thm]{Lemma}
\newtheorem{cor}[thm]{Corollary}
\newtheorem{rem}[thm]{Remark}
\newtheorem{ex}[thm]{Example}
\newtheorem{defi}[thm]{Definition}
\DeclareMathOperator{\supp}{supp}
\DeclareMathOperator{\conv}{Conv} 
\DeclareMathOperator{\cone}{Cone} 
\DeclareMathOperator{\codim}{codim}
\DeclareMathOperator{\Hom}{Hom}
\DeclareMathOperator{\pic}{Pic}
\DeclareMathOperator{\dv}{Div}
\DeclareMathOperator{\aff}{Aff}
\DeclareMathOperator{\inter}{Int}
\newcommand{\mycomments}[1]{
           \ifthenelse{\boolean{mynotes}}
                      {#1}{}
           }
\begin{document}

\title[{Mirror symmetry and patchworking}]{{Mirror symmetry for tropical hypersurfaces and patchworking}}
%\date{July 4, 2022}
\author{Diego Matessi, Arthur Renaudineau}

\begin{abstract}
In the first part of the paper, we prove a mirror symmetry isomorphism between integral tropical homology groups of a pair of mirror tropical Calabi-Yau hypersurfaces. We then apply this isomorphism to prove that a primitive patchworking of a central triangulation of a reflexive polytope gives  a connected real Calabi-Yau hypersurface if and only if the corresponding divisor class on the mirror is not zero.

\end{abstract}

\maketitle

\tableofcontents

\section{Introduction}

\subsection{Tropical mirror symmetry} Mirror symmetry is a duality between a pair of $n$-dimensional Calabi-Yau varieties $X$ and $X^{\circ}$, whereby invariants of the complex geometry of $X^{\circ}$ match with invariants of the symplectic geometry of $X$. There is, by now, a vast literature on mirror symmetry and the phenomenon can be studied at various levels of depth. A first indication of mirror symmetry is that $X$ and $X^{\circ}$ should satisfy the following mirror equality of Hodge numbers
\begin{equation}  \label{mirror:hdg} 
     h^{p,q}(X) = h^{n-p,q}(X^{\circ}).
\end{equation}
Notice that when $p=q=1$, this equality expresses the fact that the complex deformation space of $X^{\circ}$ must have the same dimension as the K\"ahler cone of $X$.  A beautiful construction of mirror pairs of manifolds was discovered by Batyrev \cite{batyr:dual_pol} by considering anti-canonical hypersurfaces in toric varieties associated to dual reflexive polytopes. The construction was later generalized by Borisov \cite{borisov:comp_int} to the case of complete intersections. 
We point out that in general the theory is complicated by the fact that in many cases the Calabi-Yau varieties constructed by this method are singular, therefore the Hodge numbers in \eqref{mirror:hdg} must be replaced by the so called stringy Hodge numbers. The equality is proved for the hypersurface and complete intersection cases in \cite{BB}. 

 In this paper we consider the case of hypersurfaces, assuming that there exists a smooth crepant resolution of both $X$ and $X^{\circ}$. In this context, our first result is a proof of  \eqref{mirror:hdg} using tropical geometry. More precisely, if $\Delta$ and $\Delta^{\circ}$ are two dual $(n+1)$-dimensional reflexive polytopes, we assume that there exists unimodular central subdivisions of both polytopes. These provide resolutions of the ambient toric varieties together with crepant resolutions of the anticanonical hypersurfaces, thus defining a mirror pair of smooth Calabi-Yau varieties $X$ and $X^{\circ}$. The same data can be used to define the tropical versions of the hypersurfaces $X_{\text{trop}}$ and $X_{\text{trop}}^{\circ}$ (see Section \ref{combmirror}). Itenberg, Katzarkov, Mikhalkin and Zharkov (IKMZ)  \cite{trop_hom} defined the tropical homology groups $H_{q}(X_{\text{trop}}; \mathcal F_p)$ for any non-singular tropical variety, as the cellular homology of a certain non-constant cosheaf $\mathcal F_p$ of $\Z$-modules defined over $X_{\text{trop}}$. 
The main theorem in \cite{trop_hom} implies in our context that if $X_{\text{trop}}$ is projective then
\begin{equation} \label{trop:hodge}
	\dim H_{q}(X_{\text{trop}}; \mathcal F_p \otimes \Q) = \dim H^{p,q}(X).
\end{equation}
Our first result is then 
\begin{thm} \label{main:1} Let $A$ be a commutative ring. Given central unimodular subdivisions of both $\Delta$ and $\Delta^{\circ}$ with associated mirror tropical hypersurfaces $X_{\text{trop}}$ and $X^{\circ}_{\text{trop}}$, we have canonical isomorphisms 
\[ H_{q}(X_{\text{trop}}; \mathcal F_p \otimes A ) \cong H_{q}(X_{\text{trop}}^{\circ}; \mathcal F_{n-p} \otimes A). \]
\end{thm}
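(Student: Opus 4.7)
The plan is to prove the isomorphism at the level of cellular chain complexes, by showing that both sides are computed from the same combinatorial datum --- the pair of unimodular central subdivisions of $\Delta$ and $\Delta^{\circ}$ --- and by exhibiting an explicit combinatorial duality between the two complexes. In the Batyrev-type construction, $X_{\text{trop}}$ is a tropical hypersurface with Newton polytope $\Delta$ (endowed with its central subdivision) embedded in the tropical toric variety whose fan comes from the subdivision of $\Delta^{\circ}$, while $X_{\text{trop}}^{\circ}$ is the symmetric object obtained by swapping the roles of $\Delta$ and $\Delta^{\circ}$.

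Cells of $X_{\text{trop}}$ should then be indexed by compatible pairs $(\sigma, \tau)$, where $\sigma$ is a positive-dimensional face of the subdivision of $\Delta$ and $\tau$ is a face of the subdivision of $\Delta^{\circ}$, the compatibility being dictated by polytope duality (roughly, $\sigma$ must lie in the face of $\Delta$ dual to the smallest face of $\Delta^{\circ}$ containing $\tau$). Cells of $X_{\text{trop}}^{\circ}$ are indexed by the swapped pairs $(\tau, \sigma)$, with dimensions matched so that $k$-cells on one side correspond to $(n-k)$-cells on the other. Writing down this bijection carefully --- including checking that the "star" of a cell on one side corresponds to the "link" of the matched cell on the other --- is the first concrete task.

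Next, I would identify the IKMZ cosheaves combinatorially on each cell. At a cell indexed by $(\sigma, \tau)$, the stalk of $\mathcal{F}_p$ on $X_{\text{trop}}$ is built from $p$-th exterior powers of the integer tangent lattice of $\sigma$, together with quotient data coming from faces of the subdivision of $\Delta$ containing $\sigma$. The canonical duality pairing between the lattices $M$ and $N$ identifies the integer tangent lattice of $\sigma$ with the annihilator of that of $\tau$ (reflexivity and unimodularity are essential here), and this should extend to an $A$-module isomorphism between $\mathcal{F}_p$ at the cell $(\sigma,\tau)$ of $X_{\text{trop}}$ and $\mathcal{F}_{n-p}$ at the cell $(\tau,\sigma)$ of $X_{\text{trop}}^{\circ}$. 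One then checks that these stalkwise identifications assemble into an isomorphism of the full cellular chain complexes: both the cellular boundary maps and the cosheaf extension maps along face inclusions should commute with the swap $(\sigma,\tau) \leftrightarrow (\tau,\sigma)$, up to a uniform sign coming from orientation reversal on dual cells. Passing to homology then gives the theorem.

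The hardest step, I expect, is pinning down the correct combinatorial description of the $\mathcal{F}_p$-stalks in this mirror setting and verifying that the tangent directions of $\sigma$ and $\tau$ are mutual annihilators under the duality pairing between $M$ and $N$. Unimodularity of the subdivisions is precisely what turns this pairing into an $A$-module isomorphism for arbitrary coefficient rings $A$, rather than merely a rational statement, and this is why the integral hypothesis is indispensable in the theorem.
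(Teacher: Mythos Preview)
Your proposal contains a genuine gap, and it originates in the sentence ``with dimensions matched so that $k$-cells on one side correspond to $(n-k)$-cells on the other.'' A dimension-reversing bijection of cells, even if it intertwined the cosheaves perfectly, would yield $H_q \cong H_{n-q}$ (a Poincar\'e-type statement), not the asserted $H_q \cong H_q$. The theorem needs a \emph{dimension-preserving} correspondence. This is not merely a slip: once you try to write down a dimension-preserving bijection between the cells of $X_{\text{trop}}$ and those of $X_{\text{trop}}^\circ$, you find that none exists in general, because the two CW structures have different numbers of cells in each dimension.

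The paper resolves this by passing to a common refinement $\mathcal{J}(T^\circ,T)$ of the cell poset, under which there \emph{is} a dimension-preserving involution $j_{T^\circ,T}$ swapping the roles of $\Delta$ and $\Delta^\circ$. But even on this refinement the $\mathcal F_p$-stalks do not match $\mathcal F_{n-p}$-stalks directly: the quotient by $\langle C(\tau)\rangle$ in the definition of $\mathcal F_p(\tau,\sigma)$ is not symmetric in the two variables, so your proposed stalkwise identification via lattice annihilators cannot be made to commute with the cosheaf maps. The paper's fix is to introduce an auxiliary ``mirror'' cosheaf $\mathcal M_p$, supported only on the bounded (spherical) part of the refinement, defined as a quotient of $\mathcal F_p$ by exactly the piece that obstructs the symmetry. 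Two short exact sequences of cosheaves, together with an acyclicity argument for their kernel and cokernel, give $H_q(\mathcal F_p)\cong H_q(\mathcal M_p)$; then a contraction map $[z]\mapsto[\iota_{v\wedge z}\Omega_N]$ with the integral volume form furnishes the isomorphism $\mathcal M_p \cong \mathcal M_{n-p}$ under the swap. Unimodularity enters precisely here, to make this contraction an isomorphism over $\Z$ and hence over any $A$. Your instinct that lattice duality and unimodularity are the engine is correct, but the mechanism is this two-step reduction rather than a direct chain-complex isomorphism.
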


This theorem corresponds to Theorem \ref{thm:main1}  and Corollary  \ref{cor:main1} where we state and prove it in a more general combinatorial context. Indeed, using the definition of tropical homology given in \cite{brugalle2022combinatorial}, we do not require the subdivision to be convex, i.e. the resolutions of the toric varieties need not be projective.  In the convex case and with $A = \Q$, the mirror relation of Hodge numbers follows from \eqref{trop:hodge}. It is not clear to us what the result means, in the non convex case, in terms of the Hodge numbers of the corresponding (non-projective) resolutions of the hypersurfaces, see Section \ref{subsection:mirrordefi} for details. We also expect that the groups $H_{q}(X_{\text{trop}}; \mathcal F_p \otimes A)$ reppresent graded pieces of a filtration over the homology (with $A$ coefficients) of the complex hypersurface $X$, but this has been proved, for any non-singular projective tropical variety, only for $A = \Q$ \cite{trop_hom}. In particular, when the integral homology of $X$ or $X^{\circ}$ has torsion the ranks of these groups with $A = \FF_2$ may differ from the Hodge numbers. In our case it follows from \cite{lefschetz:ARS} that the tropical homology groups have no torsion when the anticanonical bundle is ample, i.e. when $\Delta$ is a Delzant polytope. For examples of Calabi-Yau varieties whose integral homology has torsion see  \cite{batyr:integral}. Another aspect of Theorem \ref{main:1} is that the isomorphism between the tropical homology groups is canonical, on the other hand it is not known whether there exists a canonical isomorphism between the corresponding Hodge groups, except in some special cases (see \cite{cox:katz}, Section 4.1.3). 

The proof of Theorem \ref{main:1} is inspired by the work of Gross-Siebert \cite{G-Siebert2003} and Yamamoto \cite{yam:trop_contr}, for more details see Section \ref{rel:wrks}.

\subsection{Patchworking and mirror symmetry} Another implication of mirror symmetry is the expectation that invariants of Lagrangian submanifolds in $X$ are reflected into invariants of coherent sheaves in $X^{\circ}$. Roughly speaking, this is the content of Kontsevich's homological mirror symmetry conjecture, which is abstractly formulated into the idea that the derived Fukaya category of Lagrangian submanifolds in $X$ is equivalent to the derived category of coherent sheaves on $X^{\circ}$. Given a real Calabi-Yau variety $X$, i.e. one which is cut out by equations with real coefficients or more generally endowed with an anti-symplectic involution, the real part (or the fixed point set of the involution) is a Lagrangian submanifold. The articles \cite{CBMS}, \cite{arg_princ:realCY}, \cite{matessi:RealCY}, construct and study three dimensional real Calabi-Yau varieties $X$ where the involution preserves a Lagrangian fibration. Using the Strominger-Yau-Zaslow interpretation of mirror symmetry as a duality between Lagrangian fibrations, a family of real structures on $X$ is parametrized by Lagrangian sections, or equivalently by $\FF_2$-divisor classes $D$ in the mirror $X^{\circ}$ \cite{CBMS}.  If we denote by $\R X_D$ the corresponding real part, $\R X_D$ is connected if and only if $D \neq 0$. Moreover, the $\FF_2$-cohomology of $\R X_D$ is determined by the cohomology of the mirror $X^{\circ}$ and the homomorphism $\beta: H^2(X^{\circ}, \FF_2) \rightarrow H^4(X^{\circ}, \FF_2)$ given by $\beta(E) = E^2 + DE$, \cite{arg_princ:realCY}, \cite{matessi:RealCY}. 
 
 It turns out that the map $\beta$ is a boundary map of a spectral sequence which is similar to the one discovered, in any dimension, by the second author and Shaw \cite{ren_shaw:bound} in the context of real hypersurfaces in toric varieties constructed from primitive patchworking. In the second part of this paper we study real Calabi-Yau hypersurfaces, of any dimension $n$, arising from patchworking associated to a primitive central triangulation $T$ of a reflexive polytope $\Delta$. In this case the sign distribution on the integral points of $\Delta$ uniquely determines a divisor $D$ in the Picard group, with $\FF_2$ coefficients, of the resolved dual toric variety $\C \Sigma_T$ associated to $\Delta^{\circ}$. Indeed the integral points on the boundary of $\Delta$ correspond to rays in the fan of $\C \Sigma_T$, and therefore to toric divisors. The divisor $D$ is formed by considering the sum of toric divisors whose rays have the same sign as the center of $\Delta$ (see Figure \ref{pw_divisors} for a simple example). If we denote by $\R X_D$ the real part of the hypersurface $X_D$ constructed from patchworking, it turns out (see Proposition \ref{div:pw}) that if two divisors $D$ and $D'$ are linearly equivalent (over $\FF_2$), then $\R X_D$ and $\R X_{D'}$ are the same up to an automorphism of the ambient toric variety $\C \Sigma_T$. Indeed the signs corresponding to $D$ and $D'$ correspond to two octants of the same patchworking.
 
 The main result of the second part is the following.   

\begin{thm} \label{main:2} Let $X$ be a real Calabi-Yau hypersurface arising from a central, primitive patchworking of a reflexive polytope $\Delta$ such that the tropical homology groups $H_{n}(X_{\text{trop}}; \mathcal F_k \otimes \FF_2)$ vanish for all $0 < k < n$.  Then $\R X_D$ is connected if and only if the tropical divisor class $D_{| X_{\text{trop}}^{\circ}} \in H_{n-1}(X_{\text{trop}}^{\circ}; \mathcal F_{n-1} \otimes \FF_2)$ is zero. 
\end{thm}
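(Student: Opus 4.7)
The approach is to combine the spectral sequence of the second author and Shaw~\cite{ren_shaw:bound} with the mirror symmetry isomorphism of Theorem~\ref{main:1}. The Renaudineau--Shaw spectral sequence, for a real hypersurface $\R X_D$ arising from a primitive patchworking, converges to $H_*(\R X_D; \FF_2)$ and has $E_1$ page given by the tropical $\FF_2$-homology groups $H_q(X_{\text{trop}}; \mathcal F_p \otimes \FF_2)$; its differentials are explicit combinations of Steenrod-type squaring operations and multiplication by the patchworking divisor class $D_{|X_{\text{trop}}}$. Since $\R X_D$ is a smooth closed $n$-manifold, hence $\FF_2$-orientable, connectedness is equivalent to $\dim_{\FF_2} H_0(\R X_D; \FF_2) = 1$.

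The first step is to use the vanishing hypothesis $H_n(X_{\text{trop}}; \mathcal F_k \otimes \FF_2) = 0$ for $0 < k < n$, together with its mirror consequence $H_n(X_{\text{trop}}^\circ; \mathcal F_k \otimes \FF_2) = 0$ for $0 < k < n$ (immediate from Theorem~\ref{main:1}), to collapse the spectral sequence. After this collapse the only potential contributions to the relevant degree of the abutment come from the extremal pieces $H_n(X_{\text{trop}}; \mathcal F_0 \otimes \FF_2) \cong \FF_2$ and $H_n(X_{\text{trop}}; \mathcal F_n \otimes \FF_2) \cong \FF_2$, and connectedness reduces to whether one of these is killed by a single nontrivial differential. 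By the vanishing hypothesis this differential must have target $H_{n-1}(X_{\text{trop}}; \mathcal F_1 \otimes \FF_2)$.

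Applying Theorem~\ref{main:1} with $(p,q) = (1, n-1)$ then yields the canonical identification
\[
H_{n-1}(X_{\text{trop}}; \mathcal F_1 \otimes \FF_2) \;\cong\; H_{n-1}(X_{\text{trop}}^\circ; \mathcal F_{n-1} \otimes \FF_2),
\]
which is precisely the group in which $D_{|X_{\text{trop}}^\circ}$ naturally lives. The central technical point, and in my view the main obstacle of the proof, is to verify that under this mirror identification the image of the relevant generator by the Renaudineau--Shaw differential coincides with $D_{|X_{\text{trop}}^\circ}$. Both sides are determined cell-by-cell by the sign distribution of the patchworking on the central triangulation of $\Delta$: the differential via the combinatorial Renaudineau--Shaw formula on $X_{\text{trop}}$, and $D_{|X_{\text{trop}}^\circ}$ via its expression as a sum of toric boundary divisors in the dual fan $\Sigma_T$ restricted to $X_{\text{trop}}^\circ$. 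Matching them requires tracing each through the cellular construction of the mirror isomorphism of Theorem~\ref{main:1}. Once this identification is established, $\R X_D$ is connected if and only if $D_{|X_{\text{trop}}^\circ} = 0$, as claimed.
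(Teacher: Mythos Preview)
Your overall strategy---feeding the Renaudineau--Shaw spectral sequence through the mirror isomorphism of Theorem~\ref{main:1}---is exactly the paper's. You correctly isolate the $q=n$ row, observe that only $E^{1}_{0,n}\cong\FF_2$ and $E^{1}_{n,n}\cong\FF_2$ survive the vanishing hypothesis, and identify the key technical computation as showing that $\delta^{[1]}(S)$ corresponds to $D_{|X_{\text{trop}}^{\circ}}$ under the mirror map. That is precisely Theorem~\ref{thm:connectedness} in the paper, and it gives one direction: if $D_{|X_{\text{trop}}^{\circ}}\neq 0$ then $E^{\infty}_{0,n}=0$ and $\R X_D$ is connected.

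The gap is in the other direction. You write that ``connectedness reduces to whether one of these is killed by a single nontrivial differential'' and that ``by the vanishing hypothesis this differential must have target $H_{n-1}(X_{\text{trop}};\mathcal F_1\otimes\FF_2)$''. But the vanishing hypothesis concerns only the $q=n$ row; it says nothing about the targets $E^{k}_{k,n-k}$ of the higher differentials $\delta^{[k]}$ out of $E^{k}_{0,n}$, which are subquotients of $H_{n-k}(X_{\text{trop}};\mathcal F_k\otimes\FF_2)$ and have no reason to vanish. So when $\delta^{[1]}(S)=0$ you cannot yet conclude that $S$ survives to $E^{\infty}$: one must rule out $\delta^{[2]}(S),\delta^{[3]}(S),\ldots$ separately. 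The paper devotes an entire theorem to this. It uses the hypothesis $[\gamma_1]=0$ in $H_{n-1}(\mathcal J;\mathcal M_1^{\FF_2})$ to write $\gamma_1=\partial\zeta$, extracts from the coefficients of $\zeta$ vectors $v_\rho\in\mathcal E^D_{(\tau,\rho)}$, builds a \emph{new} lift $\tilde S$ of $S$ in the sign cosheaf, and then exhibits an explicit chain $\alpha$ with $\partial\alpha=\partial\tilde S$ inside $\mathcal K^D_1$. This shows $[\partial\tilde S]=0$ already in $H_{n-1}(\mathcal J;\mathcal K^D_1)$, which kills all higher differentials at once. Your plan does not contain this step, and without it the ``only if'' direction is unproved.

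A minor remark: the description of the $E_1$ differentials as ``Steenrod-type squaring operations and multiplication by the divisor class'' is not how the paper (or \cite{ren_shaw:bound}) presents them; they are defined purely through the filtration $\mathcal K_\bullet$ of the sign cosheaf, and the paper's computation of $\delta^{[1]}(S)$ is a direct chain-level boundary calculation.
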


\begin{rem} We expect that the tropical divisor class $D_{| X_{\text{trop}}^{\circ}}$ vanishes if and only if the complex divisor $D_{|X^{\circ}}$ also vanishes in the $\FF_2$-Picard group, but we could not find this result in current literature. We point out that the hypothesis on the vanishing of the tropical homology groups is satisfied, for instance, whenever $\Delta$ is a Delzant polytope, i.e. when it corresponds to a smooth toric variety with ample anti-canonical bundle \cite{lefschetz:ARS}, \cite{brugalle2022combinatorial}. Under this hypothesis, the number of connected components of $\R X_D$ can be at most two.
\end{rem}

The theorem corresponds to Theorem \ref{connectedness}, where the result is stated in a more general combinatorial context where the subdivision is not assumed to be convex. To prove it we consider the first page of the spectral sequence in \cite{ren_shaw:bound} converging to the $\FF_2$-homology of the real part $\R X_D$.

The part of the spectral sequence which determines the number of connected components starts with the map $\delta^{[1]}_{D}: H_n( X_{\text{trop}}; \mathcal F_0 \otimes \FF_2) \rightarrow H_{n-1}( X_{\text{trop}}; \mathcal F_1 \otimes \FF_2)$. Now $H_n( X_{\text{trop}};  \mathcal F_0 \otimes \FF_2)$ is one dimensional and generated by a fundamental class $S$. It turns out that if we apply the mirror symmetry isomorphism of Theorem \ref{main:1} to the class $\delta^{[1]}_{D}(S)$, we find that it is the tropical divisor class $D_{| X_{\text{trop}}^{\circ}} \in H_{n-1}( X_{\text{trop}}^{\circ}; \mathcal F_{n-1} \otimes \FF_2)$. If this class is not zero, then $\R X_D$ is connected. If instead it is zero, then we study the maps $\delta^{[k]}_{D}(S)$ in higher pages of the spectral sequence to show that in this case $\R X_D$ has two connected components. 

\subsection{Related works} \label{rel:wrks}
A proof of Theorem \ref{main:1} may also follow by combining the work of Gross-Siebert \cite{G-Siebert2003} on mirror symmetry via integral affine manifolds with singularities, with the work of Yamamoto \cite{yam:trop_contr}. Indeed Gross and Siebert associate to a toric degeneration of Calabi-Yau manifolds, an integral affine manifold with singularities (IAMS), i.e. a topological manifold $B$ with an atlas of integral affine coordinates away from a closed, codimension $2$ subset of $B$ (the dicriminant locus). The manifold $B$ is the dual intersection complex of the special fibre of the toric degeneration and plays a role similar to the tropical variety $X_{\text{trop}}$. Over $B$ there are natural sheaves of $\Z$-modules, denoted $\iota_{\ast} \Lambda^{\wedge p}$, where $\Lambda$ is a local system induced by the affine coordinates and $\iota$ is the inclusion of the complement of the discriminant locus inside $B$. The cohomology groups $H^q(B, \iota_{\ast} \Lambda^{\wedge p})$ play the same role as the tropical homology groups. Two mirror symmetric toric degenerations give dual pairs of integral affine manifolds with singularities $B$ and $B^{\circ}$ related to each other by discrete Legendre transform (see Section 1.4 of \cite{G-Siebert2003}). If $\Lambda^*$ denotes the dual local system, it is shown that the pairs $(B, \iota_{\ast} \Lambda^{\wedge p})$ and $(B^{\circ}, \iota_{\ast} (\Lambda^*)^{\wedge p})$ are isomorphic (see Proposition 1.50 of \cite{G-Siebert2003}). On the other hand, contraction with a global section of $\iota_{\ast} \Lambda^{\wedge n}$ gives isomorphisms  $(\Lambda^*)^{\wedge p} \cong  (\Lambda)^{\wedge {n-p}}$. Thus we obtain the isomorphism $H^q(B, \iota_{\ast} \Lambda^{\wedge p}) \cong H^q(B^{\circ}, \iota_{\ast} \Lambda^{\wedge n-p})$, which is analogous to our Theorem \ref{main:1}. The last step is to link the pairs $(B, \iota_{\ast} \Lambda^{\wedge p})$ with $(X_{\text{trop}}, \mathcal F_p)$, this is achieved by the work of Yamamoto  \cite{yam:trop_contr}. Indeed to an integral affine manifold with singularities, Yamamoto associates an (abstract) tropical variety $X_{\text{trop}}$ together with a correspondence between the sheaves. 

\medskip

Our proof of Theorem \ref{main:1} is inspired by the above works, but it is self-contained and the isomorphism is more explicit since it is expressed in terms of cellular homology. We define cosheaves $\mathcal M_p$ which are supported on a cellular refinement of the union of the bounded cells of $X_{\text{trop}}$, which is topologically a sphere $S$.  The proof then consists of two steps. First we show that $\mathcal M_p$ and $\mathcal F_p$ are linked together by the short exact sequences \eqref{seq:1} and \eqref{seq:2}. These provide the isomorphisms of homology groups $H^q(X_{\text{trop}}, \mathcal F_p) \cong H^q(S, \mathcal M_p)$. If $S^{\circ}$ is the sphere associated to $X_{\text{trop}}^{\circ}$, the next step is to define a cellular isomorphism between $S$ and $S^\circ$ such that the pairs $(S, \mathcal M_p)$ and $(S^{\circ}, \mathcal M_{n-p})$ are isomorphic. The two steps are the content of Theorem \ref{thm:main1} and together they give the isomorphism in Theorem \ref{main:1}.  Notice that the pairs $(S, \mathcal M_p)$ should be isomorphic to the pairs $(B, \iota_{\ast} \Lambda^{\wedge p})$ from the work of Gross-Siebert. 

\medskip 

Our approach is also inspired by the work \cite{brugalle2022combinatorial} where the authors define tropical homology groups also in the case of non-convex unimodular subdivisions, where there is no associated tropical variety. They also extend to this setting the definition of real phase structures from  \cite{ren_shaw:bound}, together with the spectral sequence computing the cohomology of the real part. 

\subsection{Outlook} To prove Theorem \ref{main:2} we explicitly compute the boundary map $\delta^{[1]}_{D}: H_n( X_{\text{trop}}; \mathcal F_0 \otimes \FF_2) \rightarrow H_{n-1}( X_{\text{trop}}; \mathcal F_1 \otimes \FF_2)$ of the spectral sequence from \cite{ren_shaw:bound} and then apply the mirror symmetry isomorphism of Theorem \ref{main:1}. More generally we have the other boundary maps 
\[ \delta^{[1]}_{D}: H_q( X_{\text{trop}}; \mathcal F_p \otimes \FF_2) \rightarrow H_{q-1}( X_{\text{trop}}; \mathcal F_{p+1} \otimes \FF_2). \]
If we compose with the mirror isomorphisms we get maps on the tropical homology of the mirror 
\[ \delta^{[1]}_{D}: H_q( X^{\circ}_{\text{trop}}; \mathcal F_p \otimes \FF_2) \rightarrow H_{q-1}( X^{\circ}_{\text{trop}}; \mathcal F_{p-1} \otimes \FF_2). \]
We expect that these maps have a geometric description involving the divisor class $D$ and standard operations on (co)-homology (such as intersection products), similar to the maps found in  \cite{arg_princ:realCY}, \cite{matessi:RealCY}. We are confident that the explicit description of the mirror symmetry isomorphism given in proof of Theorem \ref{main:1} is suitably adapted to do this computation.

\subsection*{Acknowledgments} 
The authors wish to thank Erwan Brugallé, Mark Gross and Kris Shaw for useful discussions. Diego Matessi was partially supported by the national research project ``Moduli spaces and special varieties'' (PRIN 2022) and he is a member of the INDAM research group GNSAGA. Arthur Renaudineau acknowledges support from the Labex CEMPI (ANR- 11-LABX-0007-01), and from ANR, project ANR-22-CE40-0014.

\section{Toric, tropical and combinatorial geometry}

In this section we review some background material on toric and tropical geometry. Some references are \cite{fulton:toric}, \cite{cox:little:schenck}, \cite{batyr:dual_pol}, \cite{cox:katz} for toric geometry; \cite{BIMS}, \cite{mik_rau:trop} for tropical geometry and \cite{trop_hom}, \cite{brugalle2022combinatorial} for tropical homology.

\subsection{Polytopes and toric varieties}
Let $M \cong \Z^{n+1}$ be a lattice and let $N = \Hom(M, \Z)$ be its dual lattice. We define $M_{\R} = M \otimes \R$ and $N_{\R} = N \otimes \R$. A lattice polytope $\Delta$ in $M_{\R}$ is the convex hull of a finite number of points of $M$. In this paper we will always assume that $\Delta$ is full dimensional. We will denote by greek letters ($\sigma$, $\tau$, etc.) the faces of $\Delta$. The normal fan $\Sigma_{\Delta}$ of $\Delta$ is the collection of convex rational cones $\{ C_{\sigma}\}_{\sigma \preceq \Delta}$ in $N_{\R}$ given by
\[  C_{\sigma} = \{ v \in N_{\R} \, | \, \inn{v}{w-u} \geq 0, \quad \forall u \in \sigma \ \text{and} \ w \in \Delta \}. \]
We will work with complex, real and tropical toric varieties. Let us briefly recall how they are constructed. Let $\Sigma$ be a rational strongly convex polyhedral fan in $N_{\R}$. For any cone $\sigma$ of $\Sigma$ consider the dual cone
$$
\sigma^{\vee} = \{ u \in M_{\R} \, | \, \inn{u}{v} \geq 0, \quad \forall v \in \sigma\}. 
$$
By Gordon's Lemma, the intersection $S_\sigma=\sigma^{\vee}\cap M$ is a finitely generated semigroup. So for any semigroup $S$, one can consider the set $X_\sigma^S:=\Hom(S_\sigma,S)$ of semigroup homomorphisms. If moreover the semigroup $S$ has a topology, we equipp the set $X_\sigma^S$ with the coarsest topology such that for any $\lambda\in S_\sigma$, the  corresponding evaluation map $X_\sigma^S\rightarrow S$ is continuous. In fact, it is enough to do it for a set of generators of $S_\sigma$ (say of cardinality $N$), and the topology coincides with the induced topology from $S^N$. If $\sigma\subset\tau$, there is a continuous restriction map from $X_\sigma^S$ to $X_\tau^S$. 
\begin{defi}
The toric variety associated to the fan $\Sigma$ over the semigroup $S$ is the direct limit 
$$
S\Sigma := \varinjlim X_\sigma^S.
$$
\end{defi}

Classically, the most common semigroups are either $\R$, $\C$ or $\R_+$ (for multiplication). We will also consider the tropical semigroup $\T:=\R\cup{-\infty}$ with the tropical multiplication being the addition. The neutral element $1_\T$ is equal to $0$, while the zero element $0_{\T}$ is $-\infty$. All these semigroups are equipped with the euclidean topology. The torus $N\otimes S^*$ is open in $S\Sigma$, acts on each $X^S_\sigma$ and this action extends to all of $S\Sigma$. The orbits are in correspondence with the cones of $\Sigma$ via the map $\sigma\rightarrow \mathcal{O}^S_\sigma$, where $\mathcal{O}^S_\sigma$ is the orbit of the point $x_\sigma\in X^S_\sigma$ defined by 
 $$x_\sigma(m)=\left\lbrace \begin{array}{c}
      1_S \text{ if } -m\in S_\sigma, \\ 0_S \text{ otherwise }.\end{array}\right.$$ 
For $S=\C$, we obtain the classical construction of complex toric varieties, see Section 1.3 of \cite{fulton:toric}. If $\Sigma=\Sigma_\Delta$ is the normal fan of a lattice polytope, the variety $\C\Sigma$ is projective.  A fan $\Sigma$ is said to be \emph{regular} if every cone can be generated by a subset of a basis of $N$, then the toric variety $\C\Sigma$ is smooth if and only if $\Sigma$ is regular. A polytope in $M_{\R}$ is called \emph{non-singular} if its normal fan is regular. We say $\Sigma$ is \emph{complete} if its support $|\Sigma|$, i.e. the union of its cones, is equal to $N_{\R}$. Completeness is necessary and sufficient for the toric variety $\C \Sigma$ to be compact. For any fan $\Sigma$, there is a natural map from $\C\Sigma$ to $\R_+\Sigma$ induced by the absolute value from $\C$ to $\R_+$ (composition with the logarithm will identify $\R_+\Sigma$ with $\T\Sigma$). In the case where $\Sigma=\Sigma_\Delta$, one has $\R_+\Sigma_{\Delta}=\Delta$ and the map $\C\Sigma_{\Delta}$ to $\Delta$ is the moment map. In general, the real toric variety $\R \Sigma$ can be reconstructed combinatorially directly form the fan $\Sigma$ as follows (see for example \cite{bihan2006}). For any group homomorphism $\xi:M\rightarrow \mathbb{F}_2$, consider $\R_+\Sigma(\xi)$ a copy of $\R_+\Sigma$. Then
\begin{equation}\label{eq:real part}
\mathbb{R} \Sigma\simeq \bigsqcup_{\xi\in \Hom(M,\mathbb{F}_2)}\R_+\Sigma(\xi) / \sim,
\end{equation}
where $(p,\xi)\sim (p',\xi')$ if and only if $p=p'$ and $\xi\vert_{\sigma^{\perp}}=\xi'\vert_{\sigma^{\perp}}$ for the unique cone $\sigma$ such that $p\in \mathcal{O}^{\R_+}_\sigma$.
\subsection{Reflexive polytopes and mirror symmetry}\label{subsection:mirrordefi}
A lattice polytope $\Delta$ is called reflexive if it is defined by a system of inequalities $\inn{v}{u}\leq 1$ with $v\in N$ and $u\in M_{\R}$. In particular, it implies that $\inter(\Delta) \cap M = \{ 0 \}$, i.e.  $0$ is the unique interior lattice point of $\Delta$. 
The dual polytope $ \Delta^{\circ} \subset N_{\R}$ is defined as
\[ \Delta^{\circ} = \{ u \in N_{\R} \, | \, \inn{u}{v} \leq 1, \quad \forall v \in \Delta \} \]
If $\Delta$ is reflexive, then $\Delta^{\circ}$ is a lattice polytope which is also reflexive and $(\Delta^{\circ})^\circ = \Delta$.

A reflexive polytope $\Delta \subset M_{\R}$ defines another fan, in the same space $M_{\R}$ called the face fan of $\Delta$ denoted by $\Xi_{\Delta}$ whose cones are the cones over the faces of $\Delta$.  We have the following relations
\[ \Xi_{\Delta} = \Sigma_{\Delta^{\circ}} \quad \text{and} \quad  \Xi_{\Delta^{\circ}} = \Sigma_{\Delta}.\]

If $\Delta$ is a non-singular reflexive polytope, then the cones of $\Xi_{\Delta^{\circ}}$ are regular. So all faces of $\Delta^{\circ}$ are simplices with no integer points in the interior. Moreover, if $\sigma$ is a facet of $\Delta^{\circ}$, then the convex hull of the origin in $N$ with $\sigma$ is a \emph{unimodular} simplex, meaning that its volume is $\frac{1}{(n+1)!}$, the minimal possible volume among lattice simplices.

Given a pair of dual polytopes $\Delta$ and $\Delta^{\circ}$, we can consider anticanonical subvarieties $X$ and $X^{\circ}$ inside  $\C\Sigma_{\Delta}$ and $\C\Sigma_{\Delta^{\circ}}$ respectively. These are both Calabi-Yau varieties and they constitute a so called \emph{mirror pair}. Typically these varieties, as well as the ambient toric varieties, will be singular. Sometimes singularities can be resolved in the realm of Calabi-Yau varieties by a crepant resolution. The easiest way to do this, if possible, is to consider unimodular convex central subdivisions. 

\begin{defi}
Let $\Delta \subset M_{\R}$ be a reflexive polytope. A triangulation of $\Delta$ with vertices in $M$ is called central if it is obtained by taking convex hulls of the origin in $M$ and any element of a triangulation of the facets of $\Delta$. We say that the subdivision is convex if it is induced by an integral convex piecewise linear function on $\Delta$. We will also denote by $\partial T$ the simplices of $T$ which are contained in the boundary $\partial \Delta$. 
\end{defi}
Let $T$ be a unimodular central triangulation of $\Delta$. Denote by $\Sigma_T$ the regular fan constructed by taking the cones from the origin of $M$ over the simplices in the subdivision of $\Delta$. This fan is a refinement of the fan $\Xi_{\Delta}=\Sigma_{\Delta^{\circ}}$. So it defines a desingularization $ \C\Sigma_{T}$ of $\C\Sigma_{\Delta^{\circ}}$.  

In this article we assume there exist unimodular triangulations $T$ and $T^{\circ}$ of $\Delta$ and $\Delta^{\circ}$ respectively, we do not always assume they are convex. In this case generic anti-canonical hypersurfaces in $\C\Sigma_{T^{\circ}}$ or $\C\Sigma_{T}$ are smooth Calabi-Yau varieties. From now on we will denote by $X$ and $X^{\circ}$ smooth anticanonical hypersurfaces in $\C\Sigma_{T^{\circ}}$ and $\C\Sigma_{T}$ respectively, instead of the singular ones in $\C\Sigma_{\Delta}$ and $\C\Sigma_{\Delta^{\circ}}$. When the triangulations are not convex these may be non-projective.
 In the projective case, i.e. when $T$ and $T^{\circ}$ are both convex, Batyrev and Borisov  \cite{BB}  proved that the Hodge numbers of $X$ and $X^{\circ}$ satisfy the following mirror identity (see also \cite{cox:katz})
\[ h^{p,q}(X) = h^{n-p,q}(X^{\circ}). \]
This relation is in fact proved in Theorem 4.15 of \cite{BB} in the case where the Hodge numbers are replaced by the so called stringy-Hodge numbers, which only depend on the singular Calabi-Yau. Then, Proposition 1.1 of  \cite{BB} states that the stringy Hodge numbers coincide with the Hodge numbers of the smooth, projective crepant resolutions given by $T$ and $T^{\circ}$.  We expect that the mirror identity of Hodge numbers should also hold in the non-projective case, although we could not find a proof of this in the literature. 

\subsection{Combinatorial mirror pairs}\label{combmirror}
Let us introduce first some important notations we will use all along the paper. Given a simplex $\sigma \in T$ 
\begin{itemize}
\item denote by $C(\sigma) \in \Sigma_{T}$ the cone over $\sigma$;
\item denote by $\sigma^{\perp}$ the subspace of $N$  orthogonal to the integral tangent space of $\sigma$;
\item if $\sigma \neq 0$, let $$\sigma_{\infty} = \sigma \cap \partial \Delta. $$
\end{itemize}

Given a cone $\rho \in \Sigma_{T}$ 
\begin{itemize}
\item denote by $S(\rho)$ the simplex $\rho \cap \Delta$;
\item denote by $\hat{\sigma}$ the simplex $S(C(\sigma))$, i.e. the convex hull of $0$ and $\sigma$.
\end{itemize} 
In particular, when $0 \in \sigma$, then $\hat{\sigma}=\sigma$ and $\sigma_{\infty}$ is the unique facet of $\sigma$ contained in $\partial \Delta$. If $0 \notin \sigma$,  then $\sigma_{\infty} = \sigma$.  

Given two fans $\Sigma$ and $\Sigma'$ such that $\Sigma$ is a refinement of $\Sigma'$,  

\begin{itemize}
\item  for every cone $\rho \in \Sigma$, denote by $\min(\rho)$ the smallest cone of $\Sigma'$ containing $\rho$.
\end{itemize}
If $\Sigma$ is the normal fan of a polytope $\Delta$ 
\begin{itemize}
\item denote by $\rho^{\vee}$ the face of $\Delta$ which is normal to $\rho$, and by $F^\vee$ the cone of $\Sigma$ which is normal to the face $F$ of $\Delta$.
\end{itemize}

Assume for a moment that $T$ is a unimodular convex central triangulation of $\Delta$. If $\mu:\Delta\cap M\rightarrow \Z_{\geq 0}$ is a convex function ensuring the convexity of $T$, one can consider its Legendre transform (defined over $N_\R$):
\begin{equation}\label{tropicalhypersurface}
\mu^*(x)=\max_{y\in\Delta\cap M} ( \langle x,y\rangle -\mu(y)),
\end{equation}
and the set
$$
V_\mu:=\left\lbrace x\in N_\R \mid \exists y_1\neq y_2, \:\: \mu^*(x) = \langle x,y_1\rangle -\mu(y_1)=\langle x,y_2\rangle -\mu(y_2)\right\rbrace.
$$
In other words, the set $V_\mu$ is the set of points where the maximum in (\ref{tropicalhypersurface}) is attainted at least twice. 
It is the so-called tropical hypersurface associated to $\mu$: it is a weighted rational balanced polyhedral complex, inducing a polyhedral subdivision of $N_\R$ which is dual (as a poset) to the triangulation $T$ (see for example \cite{BIMS} for more details). Moreover, the compactification of $V_\mu$ in the tropical toric variety $\T\Sigma_\Delta$ induces a subdivision of $\T\Sigma_\Delta$ with underlying poset the subposet of $\Sigma_\Delta\times T$:
\begin{equation} \label{poset:delta}
\Phi:=\left\lbrace (\rho,\sigma)\in\Sigma_\Delta\times T \mid \sigma\subset \rho^\vee \right\rbrace.
\end{equation}
The order on the poset is the inverse of the inclusion, meaning that $(\rho,\sigma)\leq (\rho',\sigma')$ iff $\rho'\subset\rho$ and $\sigma'\subset\sigma$ (see \cite{brugalle2022combinatorial} Remark 2.3).

\begin{defi} \label{dual:trop}
Let $T$ be an unimodular convex central triangulation of $\Delta$ and $T^\circ$ be an unimodular convex central triangulation of $\Delta^\circ$, induced respectively by convex functions $\mu$ and $\mu^\circ$.
We denote by $X_{\text{trop}}$ the compactification of $V_\mu$ inside $\T\Sigma_{T^\circ}$ and by $X_{\text{trop}}^\circ$ the compactification of $V_{\mu^\circ}$ inside $\T\Sigma_{T}$.
\end{defi} 
The tropical variety $X_{\text{trop}}$ induces a subdivision of $\T\Sigma_{T^\circ}$ with underlying poset the subposet of $T^\circ\times T$ defined by
$$\mathcal{P}(T^\circ,T):=\left\lbrace (\tau,\sigma)\in T^\circ\times T \: \mid \: 0\in\tau \text{ and } \sigma\subset \min(C(\tau))^\vee \right\rbrace . $$
Recall that $C(\tau)$ is the cone (in $\Sigma_{T^\circ}$) over $\tau$. The fan $\Sigma_{T^\circ}$ is a refinement of the fan $\Xi_{\Delta^\circ}$, which is the normal fan of $\Delta$. Then by definition $\min(C(\tau))^\vee$ is a face of $\Delta$. The order on the poset $\mathcal{P}(T^\circ,T)$ is also the inverse of the inclusion.
\begin{ex}
\label{ex:cubicanddual}
Let
 \[ \Delta = \conv((-1,-1), \ (-1,2), \ (2,-1)). \] 
 It is a reflexive polygon and its dual is given by 
 \[ \Delta^\circ = \conv((1,1), (-1,0),(0,-1)).\]
  Denote by $T$ and $T^\circ$ their unique primitive central triangulations. They are both convex. We draw in Figure \ref{refl_pol_subdiv} the subdivision of $\T\Sigma_{T^\circ}$ induced by a tropical curve dual to $T$ and some examples of cells in the underlying poset $\mathcal{P}(T^\circ,T)$.
\begin{figure}[!ht] 
\begin{center}
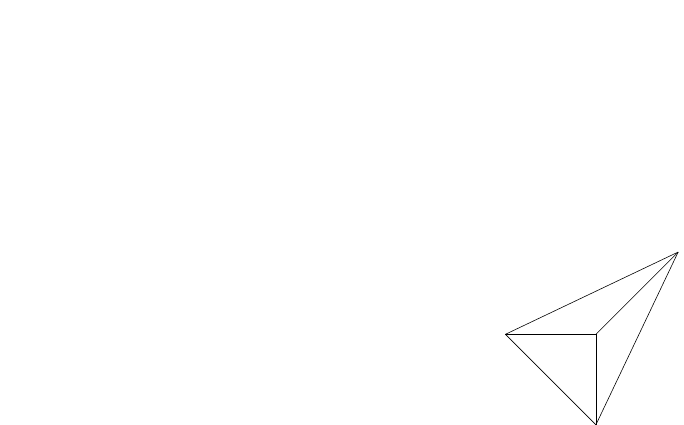
\caption{The subdivision of $\T\Sigma_{T^\circ}$ induced by a tropical curve dual to $T$.}
\label{refl_pol_subdiv}
\end{center}
\end{figure}

\end{ex}
In fact in this paper we do not assume any convexity hypothesis. For any $T$ and $T^\circ$ as above (not necessarily convex), one can still consider the posets $\mathcal{P}(T,T^{\circ})$ and $\mathcal{P}(T^{\circ},T)$. Recall that to any poset $P$ is associated an abstract simplicial complex, called the order complex, whose vertices are the elements of $P$ and whose simplices are the (finite) nonempty chains of $P$. Moreover, to any simplicial complex, there is a topological space called the \emph{geometric realization} defined by formal convex combinations of vertices belonging to a simplex. The geometric realization of a poset $P$ will be denoted by $\vert P\vert $. By definition, the geometric realization is subivided into simplices: for any chain $x_\bullet=(x_1 < \cdots < x_n )$ in  $P$, we denote the corresponding simplex of $\vert P\vert$ by $\vert x_\bullet \vert$. We will consider here a coarser subdivision, defined as follows. For any $x\in P$, consider the set of chains whose elements are smaller than $x$:
$$
x_\leq=\left\lbrace x_\bullet \text{ chains in } P \: \vert  \: x_k\leq x \:\: \forall k \right\rbrace,
$$
and 
$$
F(x)= \bigcup_{x_\bullet \in x_\leq}\vert x_\bullet \vert.
$$
The coarser subdivision of $\vert P\vert$ we consider is defined by 
\begin{equation} \label{poset:subdiv}
\vert P\vert := \bigcup_{x\in P} F(x).
\end{equation}

\begin{ex}
Let $\Sigma$ be a fan. It defines a poset with the order given by reversing the inclusion of cones. Then $\vert \Sigma \vert$ with the coarser subdivision \eqref{poset:subdiv} is isomorphic (as a CW-complex) to the tropical toric variety $\T\Sigma$. The isomorphism is given by $F(\sigma)\rightarrow \overline{\mathcal{O}_\sigma^\T}$ for any cone $\sigma$.
\end{ex}
 The cells $F(\tau,\sigma)$ associated to elements $(\tau, \sigma)$ in the poset $\mathcal{P}(T^\circ,T)$, form a regular CW-structure on $\T\Sigma_{T^\circ}$.
\begin{prop}\label{prop:tropicaltoricposet}
The poset $\mathcal{P}(T^\circ,T)$ is the face poset of a regular CW-structure on $\T\Sigma_{T^\circ}$ with cells $F(\tau,\sigma)$.
\end{prop}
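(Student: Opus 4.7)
The plan is to construct a regular CW-decomposition of $\T\Sigma_{T^\circ}$ whose face poset, under the reverse-inclusion convention, is exactly $\mathcal{P}(T^\circ, T)$. By the general correspondence between regular CW-complexes and their face posets, the closed cells of such a decomposition are reconstructed as the closed dual cells in the order complex, which coincide with the sets $F(\tau, \sigma)$ of the coarser subdivision \eqref{poset:subdiv}. This identification will yield the proposition.

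I would refine the standard CW-structure on $\T\Sigma_{T^\circ}$ from the preceding example, whose cells are the closed torus orbits $\overline{\mathcal{O}^\T_{C(\tau)}}$ indexed by $\tau \in T^\circ$ with $0 \in \tau$. Each such orbit closure is itself a tropical toric variety, for the quotient fan $\Sigma_{T^\circ}/C(\tau)$, and this quotient fan refines the normal fan of the face $F_\tau := \min(C(\tau))^\vee$ of $\Delta$. Restricting the triangulation $T$ to $F_\tau$ induces a polyhedral subdivision of $\overline{\mathcal{O}^\T_{C(\tau)}}$ whose open cells are in natural bijection with the simplices $\sigma \in T$ satisfying $\sigma \subset F_\tau$; in the convex case this subdivision is literally the restriction of the tropical hypersurface $V_\mu$, while in general it is defined combinatorially as the dual subdivision inherent in the indexing. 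Labeling each such open cell by the pair $(\tau, \sigma)$ stratifies $\T\Sigma_{T^\circ}$ exactly along $\mathcal{P}(T^\circ, T)$, and the face relations are the expected ones: a cell $(\tau', \sigma')$ lies on the boundary of $(\tau, \sigma)$ iff $\tau \subset \tau'$ (passing to a smaller toric stratum in the boundary of $\overline{\mathcal{O}^\T_{C(\tau)}}$) and $\sigma \subset \sigma'$ (passing to a lower-dimensional dual cell inside the orbit), matching the reverse-inclusion order of $\mathcal{P}(T^\circ, T)$.

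The main technical obstacle is verifying regularity, namely that each $F(\tau, \sigma)$ is homeomorphic to a closed ball. In the convex setting this is immediate, as the relevant dual cells are tropical compactifications of convex polyhedra and hence closed balls. Without convexity one must argue purely combinatorially: the boundary of $F(\tau, \sigma)$ is the order complex of the strict lower set $\{y \in \mathcal{P}(T^\circ, T) : y < (\tau, \sigma)\}$, which decomposes as a join built from the link of $\tau$ in $T^\circ$ and a link of $\sigma$ inside $T|_{F_\tau}$. Unimodularity of $T$ and $T^\circ$ guarantees that these combinatorial links are simplicial spheres of the expected dimensions, so their join is again a sphere of the correct dimension and $F(\tau, \sigma)$ is therefore a closed ball. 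This verification completes the argument that the $F(\tau, \sigma)$ form a regular CW-structure on $\T\Sigma_{T^\circ}$ with face poset $\mathcal{P}(T^\circ, T)$.
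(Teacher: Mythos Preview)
Your overall strategy—recover the CW-structure from the order complex and verify regularity by showing that the order complex of each strict lower set $P_{<(\tau,\sigma)}$ is a sphere—is sound in principle, but the specific mechanism you invoke does not work. The claimed join decomposition fails because the two factors are coupled: an element $(\tau',\sigma')$ of the lower set must satisfy $\sigma'\subset\min(C(\tau'))^\vee$, and since $\min(C(\tau'))^\vee\subsetneq\min(C(\tau))^\vee=F_\tau$ whenever $\tau'$ crosses into a larger cone of $\Sigma_\Delta$, enlarging $\tau$ genuinely restricts the admissible $\sigma'$. Thus $P_{<(\tau,\sigma)}$ is \emph{not} the product of $\{\tau':\tau\subsetneq\tau'\}$ with $\{\sigma':\sigma\subsetneq\sigma'\subset F_\tau\}$, and its order complex is not the join of the two links you name. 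A concrete failure: in the cubic example, take $\tau$ the edge from $0$ to a vertex $v$ of $\Delta^\circ$ and $\sigma$ the vertex of $\Delta$ dual to one of the two maximal cones through $v$. The link of $\tau$ in $T^\circ$ is $S^0$ and the link of $\sigma$ in $T|_{F_\tau}$ is a point, so your join would be a $1$-simplex; but $F(\tau,\sigma)$ is $1$-dimensional, so its boundary must be $S^0$, and indeed $P_{<(\tau,\sigma)}$ has exactly two (incomparable) elements—the constraint kills the second triangle direction. Relatedly, your claim that the link of $\sigma$ in $T|_{F_\tau}$ is always a sphere is false when $\sigma$ meets $\partial F_\tau$.

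The paper sidesteps this entirely by a different device: it uses the blow-down $\operatorname{Bl}:\T\Sigma_{T^\circ}\to\T\Sigma_\Delta$ induced by the refinement $\Sigma_{T^\circ}\to\Sigma_\Delta$. On $\T\Sigma_\Delta$ the poset $\Phi=\{(\rho,\sigma)\in\Sigma_\Delta\times T:\sigma\subset\rho^\vee\}$ is already known (from \cite{brugalle2022combinatorial}) to give a regular CW-structure, and one then sets $F(\tau,\sigma)=\operatorname{Bl}_{C(\tau)}^{-1}\bigl(F(\min C(\tau),\sigma)\bigr)$. The coupling constraint is absorbed into the geometry of the blow-down rather than handled combinatorially. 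If you want to salvage your direct approach, you would need to replace the product/join picture by something that tracks how the $\tau$-link and the $\sigma$-link interact along the strata of $\Sigma_\Delta$; this is essentially what the blow-down does for free.
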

\begin{proof} We can describe the cells $F(\tau, \sigma)$ induced by $\mathcal{P}(T^\circ,T)$ as follows.
Since $\Sigma_{T^\circ}$ is a refinement of $\Sigma_{\Delta}$, we have a blow-down map 
\[ \operatorname{Bl}: \T\Sigma_{T^\circ} \rightarrow \T\Sigma_{\Delta}.\] 
If $\rho \in \Sigma_{T^\circ}$, then $\operatorname{Bl}$ maps the cell $F(\rho)$ to the cell $F(\min \rho)$. In the interior of these cells, the map is given by the quotient
\[ \operatorname{Bl}_{\rho}: N_{\R}/ \langle \rho \rangle \rightarrow  N_{\R}/ \langle \min \rho \rangle. \]
It is shown in \cite{brugalle2022combinatorial} Section 3.2.1 that the cells $F(\rho, \sigma)$ associated to elements in the poset $\Phi$ (see \eqref{poset:delta}) define a regular CW-structure on $\T\Sigma_{\Delta}$. Then, given $(\tau, \sigma) \in \mathcal{P}(T^\circ,T)$,  the cell $F(\tau, \sigma)$ can be identified with the preimage of the cell $F(\min C(\tau), \sigma)$ with respect to $\operatorname{Bl}$, i.e.
\[ F(\tau, \sigma) = \operatorname{Bl}_{C(\tau)}^{-1}(F(\min C(\tau), \sigma)). \]
\end{proof}

Again, in the case of convex triangulations, this subdivision is the subdivision induced on $\T\Sigma_{T^\circ}$ by $X_{\text{trop}}$.
We will be intersted more specifically in the subposet of $\mathcal{P}(T^\circ,T)$ defined by 
$$
\mathcal{P}^1(T^\circ,T):=\left\lbrace (\tau,\sigma)\in \mathcal{P}(T^\circ,T) \vert \dim(\sigma)\geq 1 \right\rbrace .
$$
In the convex case, this poset parametrizes $X_{\text{trop}}$. As it will appear later, this poset will be the support of the cosheaves we will consider. Denote by $X_{T^\circ,T}$ the geometric realization $\vert \mathcal{P}^1(T^\circ,T)\vert$ as defined above with the subdivision \eqref{poset:subdiv}. Define the following subspace of $X_{T^\circ,T}$:
\begin{equation} \label{trop:sphere}
S_{T^\circ,T}=\left\lbrace F(0,\sigma)\in X_{T^\circ,T} \:\: \vert \:\: 0\in\sigma \right\rbrace.
\end{equation}
This is topologically a sphere and in the case where $X_{T^\circ,T}=X_{\text{trop}}$, this is the union of all bounded faces of $X_{\text{trop}}$ in $N_\R$.
\begin{defi}
Let $T$ be a unimodular central triangulation of a reflexive polytope $\Delta$ and $T^\circ$ a unimodular central triangulation of its dual $\Delta^\circ$. The pair of CW-complexes 
$$(X_{T^\circ,T},X_{T,T^\circ}) $$
is called a combinatorial mirror pair. 
\end{defi}

\subsection{Poset homology}
We define the homology groups $H_{\bullet}(P; \mathcal F)$ of a cosheaf $\mathcal F$ defined over a poset $P$. See \cite{brugalle2022combinatorial} and the references therein for more details. View a poset $P$ as a category, whose objects are its elments and whose morphisms are the ordered pairs $x\leq_P y$.  For any ring $R$, an $R$-cosheaf $\mathcal{F}$ on $P$ is a contravariant functor $\iota_\mathcal{F}$ from $P$ to the category of $R$-modules. Given additional assumptions on $P$, one can associate a differential complex $(C_\bullet (P;\mathcal{F}),\partial)$ to any cosheaf $\mathcal{F}$. A \emph{cover relation} is a pair $x\lessdot y$ such that there exists no $z\in P$ with $x< z<y$.  A \emph{grading} on $P$ is a function $\dim: P\rightarrow \Z$ such that $\dim(y)-\dim(x)=1$ for any cover relation $x\lessdot y$ . An interval of length $2$ is an interval $\left[ x,y \right]$ such that $\dim(y)-\dim(x)=2$. We say that $P$ is \emph{thin} if every interval of length $2$ contains exactly $4$ elements. A \emph{signature} is a map $s$ from the set of all cover relations of $P$ to $\{\pm 1\}$, and it is called \emph{balanced} if any interval of length $2$ contains an odd number of $-1$'s. Given a graded, thin poset $P$ with a balanced signature, the differential complex is defined by 
$$
C_q(P;\mathcal{F})=\bigoplus_{\dim(x)=q}\mathcal{F}(x), \:\:\:\: \partial: C_q(P;\mathcal{F})\rightarrow C_{q-1}(P;\mathcal{F}),
$$
where for all $x\in P$ of dimension $q$, one has
$$
\partial\vert_{\mathcal{F}(x)}(a)=\sum_{y\lessdot x}s(y\lessdot x)\iota(y\leq_P x)(a).
$$
Since the poset is thin, we have $\partial^2=0$. The homology groups of  $(C_\bullet (P;\mathcal{F}),\partial)$ are denoted as usual by $H_\bullet (P;\mathcal{F})$. If a subset $U$ of $P$ is closed under taking larger elements (sometimes $U$ is called open), one can restrict the differential complex and the homology groups to $U$. We denote this restriction by $C_\bullet (U;\mathcal{F})$ and $H_\bullet (U;\mathcal{F})$. Also maps of cosheaves induce maps between homology groups, and short exact sequences of cosheaves induce long exact sequences of homology groups.

\subsection{Tropical homology}
Tropical homology is a homology theory with non-constant coefficients defined over tropical varieties \cite{trop_hom}, \cite{brugalle2022combinatorial}. In our case, if $T$ and $T^\circ$ are convex unimodular central triangulations of $\Delta$ and $\Delta^\circ$ respectively, the tropical varieties $X_{\text{trop}}\subset \T\Sigma_{T^\circ}$ and $X_{\text{trop}}^\circ\subset \T\Sigma_{T}$ of Definition \ref{dual:trop} are non-singular projective tropical varieties. The tropical homology groups are denoted by $H_q(X_{\text{trop}}; \mathcal{F}_p)$, and it follows from \cite{trop_hom} that 
$$
\dim H_q(X_{\text{trop}};\mathcal{F}_p\otimes\Q)=h^{p,q}(X) \text{ and } \dim H_q(X_{\text{trop}}^\circ;\mathcal{F}_p\otimes \Q)=h^{p,q}(X^\circ).
$$
Here $X$ and $X^\circ$ are any smooth anticanonical hypersurfaces in $\C \Sigma_{T^\circ}$ and $\C \Sigma_{T}$ respectively. Tropical homology (in it's cellular version) belongs to the realm of poset homology.

We fix $T$ and $T^\circ$ not necessarily convex primitive central triangulations of $\Delta$ and $\Delta^\circ$ respectively. The poset $\mathcal{P}(T^\circ,T)$ is graded by the formula
$$
\dim(\tau,\sigma)=\codim(\tau)-\dim(\sigma).
$$
Note that $\dim(\tau,\sigma)=\dim F(\tau,\sigma)$ in the geometric realization of $\mathcal{P}(T^\circ,T)$.
Moreover, the poset $\mathcal{P}(T^\circ,T)$ is thin and it admits a balanced signature, obtained by choosing orientations on each $F(\tau,\sigma)$.
The $p$-multitangent cosheaves over $\mathcal P(T^\circ,T)$ are defined by
$$
\mathcal{F}_p(\tau,\sigma):=\sum_{
\tiny{\begin{array}{c}
\lambda\subset\sigma \\ \dim(\lambda)=1
\end{array}}} \bigwedge^p (\lambda^\perp / \langle C(\tau) \rangle ) ,
$$
where $\langle C(\tau) \rangle$ is the submodule generated by $C(\tau) \cap N$.
The cosheaf maps of $\mathcal{F}_p$ are induced by the projections 
\begin{equation}\label{cosheafmaps}
N /\langle C(\tau) \rangle \rightarrow N /\langle C(\tau') \rangle,
\end{equation}
if $\tau\subset \tau'$ (recall that by definition of $\mathcal P(T^\circ,T)$, the faces $\tau$ and $\tau'$ contain $0$). 
Note that the support of the cosheaves $\mathcal{F}_p$ is the poset $\mathcal P^1(T^\circ,T)$. In order to lighten the notations, if $\mathcal{G}$ is any poset on $\mathcal P^1(T^\circ,T)$, we will denote by $H_\bullet (X_{T^\circ,T};\mathcal{G})$ the homology groups $H_\bullet (\mathcal P^1(T^\circ,T);\mathcal{G})$.
\section{Combinatorial mirror symmetry}

Define $\mathcal{P}^\infty(T^\circ,T)$ to be the subposet of $\mathcal{P}(T^\circ,T)$ consisting of elements whose first coordinate is non zero. Its elements parametrize the boundary of $\T\Sigma_{T^\circ}$.
\begin{lem}\label{lem:involution_p}
The map 
\begin{equation}
  \begin{split} 
           p_{T^\circ,T}: \mathcal{P}^\infty(T^\circ, T) & \longrightarrow  \mathcal{P}^\infty(T , T^\circ)  \\
             (\tau,\sigma) & \mapsto (\hat{\sigma},\tau_\infty) 
   \end{split} 
 \end{equation}
 is an isomorphism of posets.
\end{lem}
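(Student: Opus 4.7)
The plan is to exhibit an explicit inverse. The natural candidate is $p_{T,T^\circ}$, the same formula with the roles of $T$ and $T^\circ$ (and of $\Delta$ and $\Delta^\circ$) swapped. The argument then splits into three ingredients: (i) well-definedness of the map, (ii) order-preservation, and (iii) verification that $p_{T,T^\circ}$ is a two-sided inverse.

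The substantive step is well-definedness. Let $(\tau,\sigma) \in \mathcal{P}^\infty(T^\circ,T)$. The assumption $\tau \neq \{0\}$ forces $\min(C(\tau))^\vee$ to be a proper face of $\Delta$, so the containment $\sigma \subset \min(C(\tau))^\vee$ implies $\sigma \subset \partial\Delta$; in particular $0 \notin \sigma$ and $\hat\sigma_\infty = \sigma$, a fact that will be crucial for the inverse computation. All membership and non-triviality conditions for $(\hat\sigma,\tau_\infty) \in \mathcal{P}^\infty(T,T^\circ)$ are then immediate, except for $\tau_\infty \subset \min(C(\hat\sigma))^\vee$. Letting $F$ denote the smallest face of $\Delta$ containing $\sigma$ and $G$ the smallest face of $\Delta^\circ$ containing $\tau_\infty$, one has $\min(C(\tau))^\vee = G^*$ and $\min(C(\hat\sigma))^\vee = F^*$ under the face duality of reflexive polytopes (noting that $C(\hat\sigma) = C(\sigma)$). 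The hypothesis is $\sigma \subset G^*$, i.e.\ $F \subset G^*$; the conclusion to verify is $\tau_\infty \subset F^*$, which (since $\tau_\infty \subset G$) follows from $G \subset F^*$. Because the duality $F \leftrightarrow F^*$ between face lattices of dual reflexive polytopes reverses inclusions, the equivalence $F \subset G^* \iff G \subset F^*$ is automatic.

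Order preservation is immediate: both $\sigma \mapsto \hat\sigma$ and $\tau \mapsto \tau_\infty$ preserve inclusions, and the poset order is reversed inclusion in both coordinates. For the inverse, the composition collapses to the identity:
\[ p_{T,T^\circ} \circ p_{T^\circ,T}(\tau,\sigma) \,=\, p_{T,T^\circ}(\hat\sigma,\tau_\infty) \,=\, (\widehat{\tau_\infty},\hat\sigma_\infty) \,=\, (\tau,\sigma), \]
using $\widehat{\tau_\infty} = \tau$ (because $0 \in \tau$ and $\tau_\infty$ is the facet of $\tau$ opposite the origin, so $\tau = \conv(0,\tau_\infty)$) and $\hat\sigma_\infty = \sigma$ (because $\sigma \subset \partial\Delta$). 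The symmetric computation handles the other composition, so $p_{T^\circ,T}$ is an isomorphism of posets with inverse $p_{T,T^\circ}$.

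The main obstacle is the well-definedness step: it is where reflexivity of $\Delta$ genuinely enters, through the order-reversing bijection on face lattices, and it requires carefully keeping track of the two refinements $\Sigma_{T^\circ} \to \Xi_{\Delta^\circ} = \Sigma_\Delta$ and $\Sigma_T \to \Xi_\Delta = \Sigma_{\Delta^\circ}$ to see that $\min(C(\tau))^\vee$ and $\min(C(\hat\sigma))^\vee$ realize the dual faces $G^*$ and $F^*$ as claimed.
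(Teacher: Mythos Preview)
Your proof is correct and follows the same strategy as the paper: exhibit $p_{T,T^\circ}$ as the inverse and check that the only nontrivial step, the containment $\tau_\infty \subset \min(C(\hat\sigma))^\vee$, follows from reflexivity. The paper verifies this by an explicit pairing computation (writing $\min(C(\tau)) = \cone(u_1,\dots,u_r)$, $\min(C(\hat\sigma)) = \cone(v_1,\dots,v_s)$, and showing $\langle u_i,v_j\rangle = 1$), whereas you invoke the order-reversing face duality $F \leftrightarrow F^*$ directly; these are the same argument at two levels of abstraction.
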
 
\begin{proof}
The map $p_{T^\circ,T}$ is clearly invertible with inverse $p_{T,T^\circ}$. We only have to verify that if $\sigma\subset \min(C(\tau))^\vee$, then $\tau_\infty\subset \min(C(\hat{\sigma}))^\vee$. 
Let us say that $\min(C(\tau))=\cone(u_1,\cdots,u_r)$ and that $\min(C(\hat{\sigma}))=\cone(v_1,\cdots,v_s)$, where the $u_i$'s  in $N$ and the $v_j$'s in $M$ are respectively primitive generators of the rays of $\min(C(\tau))$ and  $\min(C(\hat{\sigma}))$. Then since $\sigma\subset \min(C(\tau))^\vee$ and by reflexivity, one has that
$$
 \langle u_i,x \rangle =1,  \quad \forall x\in\sigma \text{ and } \forall i.
$$
Moreover, the same equality is true for any $x$ in the minimal face of $\Delta^{\circ}$ containing $\sigma$. So one has that $\langle u_i,v_j \rangle=1$ for all $i,j$. Now if $x\in\tau_\infty$, then $x=\sum\alpha_i u_i$ with $\sum\alpha_i=1$. Then for all $j$, we obtain that $\langle x,v_j \rangle =\sum \alpha_i  \langle u_i,v_j \rangle=1$. 
\end{proof}
\begin{ex}
In Figure \ref{refl_pol_subdiv_2}, we represented the geometric realization of the poset $\mathcal{P}(T,T^\circ)$, where $T$ and $T^\circ$ are defined as in Example \ref{ex:cubicanddual}, with some examples of cells. Notice that the cells $(\tau_2, \sigma_2)$ and $(\tau_3, \sigma_3)$ appearing in Figure \ref{refl_pol_subdiv_2} correspond, under the map $p_{T^\circ,T}$, to the cells with the same label in Figure \ref{refl_pol_subdiv}.

\begin{figure}[!ht] 
\begin{center}
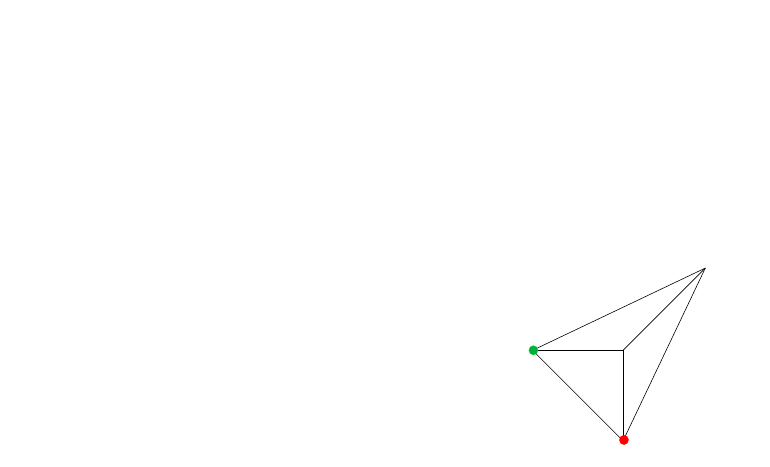
\caption{The subdivision $\mathcal{P}(T,T^\circ)$}
\label{refl_pol_subdiv_2}
\end{center}
\end{figure}
\end{ex}

We will next refine the poset $\mathcal{P}(T^\circ,T)$, in order to extend this correspondence to the whole poset minus the origin.

\subsection{A refinement of the poset $\mathcal{P}(T^\circ,T)$}
\begin{defi}
The poset $\mathcal{J}(T^\circ,T)$ is the subposet of $T^\circ\times T$ defined by

\begin{itemize}
\item If $\sigma\neq 0$ and $0\in\sigma$, then consider all pairs $(\tau,\sigma)$ such that $\tau\in \partial T^\circ$ and $\sigma_\infty\subset \min(C(\tau))^\vee$.
\item If $\sigma\in \partial T$, consider all pairs  $(\tau,\sigma)$ such that $\tau\neq 0$ and $\sigma\subset \min(C(\tau))^\vee$.
\end{itemize}
Define $\mathcal{J}^S(T^\circ,T)$ the subposet of $\mathcal{J}(T^\circ,T)$ consisting of pairs of the first kind, and by  $\mathcal{J}^\infty(T^\circ,T)$ the subposet of $\mathcal{J}(T^\circ,T)$ consisting of pairs $(\tau,\sigma)$ such that $0\in\tau$. 
\end{defi}

Notice that we do not consider the pair $(0,0)$ in $\mathcal{J}(T^\circ,T)$. In fact, by definition the pair $(0,0)$ would be isolated (i.e. not comparable to any other pair inside $\mathcal{J}(T^\circ,T)$).

It follows from the definition that the map from $\mathcal{J}(T^\circ,T)$ to $\mathcal{P}(T^\circ,T)\setminus (0,0)$ sending $(\tau,\sigma)$ to $(0,\sigma)$ if $0\notin \tau$, and to itself if $0\in\tau$ is surjective. The poset $\mathcal{J}(T^\circ,T)$ is still a graded poset (with the same grading as before), thin and still admits a balanced signature. In fact, its geometric realization (as defined above)  is a subdivision of the one of $\mathcal{P}(T^\circ,T)$ (minus the cell $F(0,0)$). Similarly, the geometric realization of $\mathcal{J}^S(T^\circ,T)$ is a subdivision of $S_{T^\circ,T}$, and $\mathcal{J}^\infty(T^\circ,T)=\mathcal{P}^\infty(T^\circ,T)$. As a corollary of Lemma \ref{lem:involution_p}, one obtains the following
\begin{lem}
The map
\begin{equation} \label{mirror_subdiv}  
           j_{T^\circ,T}: \mathcal{J}(T^\circ, T) \longrightarrow  \mathcal{J}(T , T^\circ) 
 \end{equation}
 sending 
 \begin{itemize}
 \item $(\tau,\sigma)$ to $(\hat{\sigma},\tau_\infty)$ if $0\in\tau$ and $\sigma\in \partial T$,
 \item $(\tau,\sigma)$ to $(\sigma_\infty,\hat{\tau})$ if  $\tau\in \partial T^\circ$ and $0\in\sigma$, and
 \item $(\tau,\sigma)$ to $(\sigma,\tau)$ if $(\tau,\sigma)\in \partial T^\circ \times \partial T$ 
 \end{itemize}
 is an isomorphism of posets.
\end{lem}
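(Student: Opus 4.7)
The strategy is to verify that $j_{T^\circ,T}$ sends $\mathcal{J}(T^\circ,T)$ into $\mathcal{J}(T,T^\circ)$, that the analogously defined $j_{T,T^\circ}$ is a two-sided inverse, and that the map preserves and reflects covering relations. First I would observe that the three defining clauses partition $\mathcal{J}(T^\circ,T)$: the clause ``$0\in\tau$ and $\sigma\in\partial T$'' is exactly $\mathcal{J}^\infty(T^\circ,T)$, the clause ``$\tau\in\partial T^\circ$ and $0\in\sigma$'' is $\mathcal{J}^S(T^\circ,T)$, and the third clause consists of pairs with $0\notin\tau$ and $0\notin\sigma$. Any element of $\mathcal{J}(T^\circ,T)$ falls into exactly one, because $T$ and $T^\circ$ are central, so each non-zero simplex either contains the origin or lies on the boundary.

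For well-definedness the key identity is that if $\rho$ is a non-zero simplex on the boundary of $\Delta$ (resp.\ $\Delta^\circ$), then $C(\hat\rho)=C(\rho)$, hence $\min(C(\hat\rho)) = \min(C(\rho))$; a parallel identity is $(\hat\rho)_\infty = \rho$ for $\rho\in\partial T$ and $\widehat{\rho_\infty}=\rho$ for $\rho$ containing $0$. In clause (i) the image is $(\hat\sigma,\tau_\infty)$; since $(\hat\sigma)_\infty=\sigma$ and $C(\tau_\infty)=C(\tau)$, the source condition $\sigma\subset\min(C(\tau))^\vee$ reads exactly as $(\hat\sigma)_\infty \subset \min(C(\tau_\infty))^\vee$, which is what $\mathcal{J}^S(T,T^\circ)$ asks. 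Clause (ii) is symmetric. Clause (iii) requires $\sigma\subset\min(C(\tau))^\vee \Rightarrow \tau\subset\min(C(\sigma))^\vee$, and the reflexivity computation from the proof of Lemma \ref{lem:involution_p} applies verbatim since both $\tau$ and $\sigma$ are non-zero boundary simplices, so $\min(C(\tau))$ and $\min(C(\sigma))$ are proper non-zero cones in their respective normal fans.

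Bijectivity then follows by composing case-by-case with $j_{T,T^\circ}$ using the identities above: clauses (i) and (ii) are exchanged, while clause (iii) is involutive. The last step, which I expect to require the most care, is order preservation, because a cover relation in $\mathcal{J}(T^\circ,T)$ can cross between the three clauses. Covers are of two kinds, $(\tau,\sigma)\lessdot (\tau',\sigma)$ with $\tau'$ a facet of $\tau$ and $(\tau,\sigma)\lessdot (\tau,\sigma')$ with $\sigma'$ a facet of $\sigma$. The crossings to handle are for instance $(\tau,\sigma)\lessdot (\tau_\infty,\sigma)$ with $0\in\tau$ and $\sigma\in\partial T$, going from clause (i) to clause (iii); the images $(\hat\sigma,\tau_\infty)$ and $(\sigma,\tau_\infty)$ do form a cover because $\sigma$ is precisely the facet of $\hat\sigma$ opposite the origin. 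The symmetric boundary-crossing in $\sigma$ and all covers internal to a single clause are immediate from $\hat\cdot$ and $\cdot_\infty$ being order-preserving, and the inverse $j_{T,T^\circ}$ gives order reflection by the same analysis.
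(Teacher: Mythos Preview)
Your overall strategy is the right one and matches the paper's approach (which simply records the result as a corollary of Lemma~\ref{lem:involution_p}); the discussion of order preservation across the three clauses is more detailed than the paper's and is correct.

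There is, however, a genuine slip in your well-definedness check for clause~(i). The image $(\hat\sigma,\tau_\infty)$ does \emph{not} land in $\mathcal{J}^S(T,T^\circ)$: its second coordinate $\tau_\infty$ lies in $\partial T^\circ$ and its first coordinate $\hat\sigma$ contains $0$, so the pair is of the second defining type and in fact lies in $\mathcal{J}^\infty(T,T^\circ)$ (as the paper notes right after the statement). Consequently the membership condition you must verify is $\tau_\infty\subset\min(C(\hat\sigma))^\vee$, not $(\hat\sigma)_\infty\subset\min(C(\tau_\infty))^\vee$. The latter simplifies to the source hypothesis $\sigma\subset\min(C(\tau))^\vee$ and is vacuous; the former is precisely the content of the reflexivity computation in Lemma~\ref{lem:involution_p}. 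You correctly invoke that computation for clause~(iii), but it is equally needed for clauses~(i) and~(ii). Once you route those two clauses through Lemma~\ref{lem:involution_p} as well, the argument is complete.
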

Notice that $j_{T^\circ,T}$ sends $\mathcal{J}^S(T^\circ,T)$ to $\mathcal{J}^S(T,T^\circ)$, sends $\mathcal{J}^\infty(T^\circ,T)$ to $\mathcal{J}^\infty(T,T^\circ)$ and preserves the dimensions.

\begin{ex}
In Figures \ref{refinfed_subdiv} and \ref{refinfed_subdiv_2} we represented the geometric realizations of the posets $\mathcal{J}(T,T^\circ)$ and $\mathcal{J}(T^\circ, T)$, where $T$ and $T^\circ$ are defined again as in Example \ref{ex:cubicanddual}. The cells depicted in the two figures are dual in the sense that they correspond under the map  $j_{T^\circ,T}$.

\begin{figure}[!ht] 
\begin{center}
%% Creator: Inkscape inkscape 0.92.4, www.inkscape.org
%% PDF/EPS/PS + LaTeX output extension by Johan Engelen, 2010
%% Accompanies image file '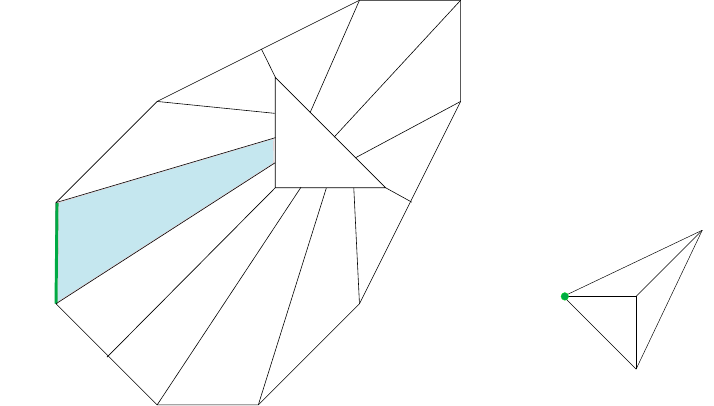' (pdf, eps, ps)
%%
%% To include the image in your LaTeX document, write
%%   \input{<filename>.pdf_tex}
%%  instead of
%%   \includegraphics{<filename>.pdf}
%% To scale the image, write
%%   \def\svgwidth{<desired width>}
%%   \input{<filename>.pdf_tex}
%%  instead of
%%   \includegraphics[width=<desired width>]{<filename>.pdf}
%%
%% Images with a different path to the parent latex file can
%% be accessed with the `import' package (which may need to be
%% installed) using
%%   \usepackage{import}
%% in the preamble, and then including the image with
%%   \import{<path to file>}{<filename>.pdf_tex}
%% Alternatively, one can specify
%%   \graphicspath{{<path to file>/}}
%% 
%% For more information, please see info/svg-inkscape on CTAN:
%%   http://tug.ctan.org/tex-archive/info/svg-inkscape
%%
\begingroup%
  \makeatletter%
  \providecommand\color[2][]{%
    \errmessage{(Inkscape) Color is used for the text in Inkscape, but the package 'color.sty' is not loaded}%
    \renewcommand\color[2][]{}%
  }%
  \providecommand\transparent[1]{%
    \errmessage{(Inkscape) Transparency is used (non-zero) for the text in Inkscape, but the package 'transparent.sty' is not loaded}%
    \renewcommand\transparent[1]{}%
  }%
  \providecommand\rotatebox[2]{#2}%
  \newcommand*\fsize{\dimexpr\f@size pt\relax}%
  \newcommand*\lineheight[1]{\fontsize{\fsize}{#1\fsize}\selectfont}%
  \ifx\svgwidth\undefined%
    \setlength{\unitlength}{342.43180162bp}%
    \ifx\svgscale\undefined%
      \relax%
    \else%
      \setlength{\unitlength}{\unitlength * \real{\svgscale}}%
    \fi%
  \else%
    \setlength{\unitlength}{\svgwidth}%
  \fi%
  \global\let\svgwidth\undefined%
  \global\let\svgscale\undefined%
  \makeatother%
  \begin{picture}(1,0.56798048)%
    \lineheight{1}%
    \setlength\tabcolsep{0pt}%
    \put(0,0){\includegraphics[width=\unitlength,page=1]{refined_subdiv.pdf}}%
    \put(0.74411807,0.14704212){\color[rgb]{0,0,0}\makebox(0,0)[lt]{\lineheight{1.25}\smash{\begin{tabular}[t]{l}$\sigma$\end{tabular}}}}%
    \put(0,0){\includegraphics[width=\unitlength,page=2]{refined_subdiv.pdf}}%
    \put(0.77612349,0.38015003){\color[rgb]{0,0,0}\makebox(0,0)[lt]{\lineheight{1.25}\smash{\begin{tabular}[t]{l}$\tau$\end{tabular}}}}%
    \put(0,0){\includegraphics[width=\unitlength,page=3]{refined_subdiv.pdf}}%
    \put(0.13790912,0.26473134){\color[rgb]{0,0,0}\makebox(0,0)[lt]{\lineheight{1.25}\smash{\begin{tabular}[t]{l}$(\tau_\infty, \sigma)$\end{tabular}}}}%
    \put(0.38866455,0.34578212){\color[rgb]{0,0,0}\makebox(0,0)[lt]{\lineheight{1.25}\smash{\begin{tabular}[t]{l}$(\tau_\infty, \hat \sigma)$\end{tabular}}}}%
    \put(-0.00024801,0.21288246){\color[rgb]{0,0,0}\makebox(0,0)[lt]{\lineheight{1.25}\smash{\begin{tabular}[t]{l}$(\tau, \sigma)$\end{tabular}}}}%
  \end{picture}%
\endgroup%

\caption{The subdivision $\mathcal{J}(T,T^\circ)$}
\label{refinfed_subdiv}
\end{center}
\end{figure}
\end{ex}

We extend the cosheaves $\mathcal{F}_p(\tau,\sigma)$ to the poset $\mathcal{J}(T^{\circ}, T)$ by precomposing with the map from above sending $(\tau,\sigma)$ to $(0,\sigma)$ if $0\notin \tau$ and to itself if $0\in\tau$. Define as before
$$
\mathcal{J}^1(T^\circ, T):=\left\lbrace (\tau,\sigma)\in \mathcal{J}(T^\circ, T) \vert \dim(\sigma)\geq 1 \right\rbrace .
$$
The geometric realization of $\mathcal{J}^1(T^\circ, T)$ defines a subdivision of $X_{T^\circ,T}$ and it is the support of the cosheaf $\mathcal{F}_p$. One has a canonical isomorphism $$H_q(\mathcal{J}(T^\circ,T); \mathcal F_p) \cong H_q(X_{T^\circ,T}; \mathcal F_p).
$$

\begin{figure}[!ht] 
\begin{center}
%% Creator: Inkscape inkscape 0.92.4, www.inkscape.org
%% PDF/EPS/PS + LaTeX output extension by Johan Engelen, 2010
%% Accompanies image file '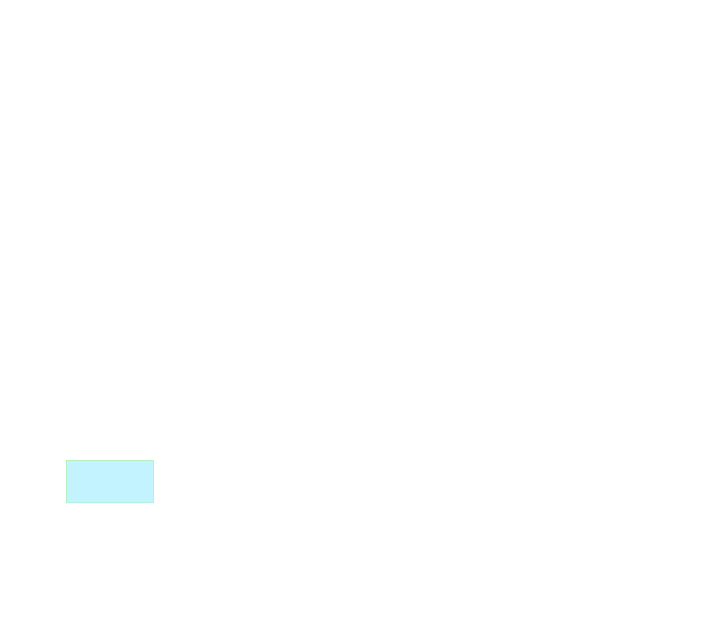' (pdf, eps, ps)
%%
%% To include the image in your LaTeX document, write
%%   \input{<filename>.pdf_tex}
%%  instead of
%%   \includegraphics{<filename>.pdf}
%% To scale the image, write
%%   \def\svgwidth{<desired width>}
%%   \input{<filename>.pdf_tex}
%%  instead of
%%   \includegraphics[width=<desired width>]{<filename>.pdf}
%%
%% Images with a different path to the parent latex file can
%% be accessed with the `import' package (which may need to be
%% installed) using
%%   \usepackage{import}
%% in the preamble, and then including the image with
%%   \import{<path to file>}{<filename>.pdf_tex}
%% Alternatively, one can specify
%%   \graphicspath{{<path to file>/}}
%% 
%% For more information, please see info/svg-inkscape on CTAN:
%%   http://tug.ctan.org/tex-archive/info/svg-inkscape
%%
\begingroup%
  \makeatletter%
  \providecommand\color[2][]{%
    \errmessage{(Inkscape) Color is used for the text in Inkscape, but the package 'color.sty' is not loaded}%
    \renewcommand\color[2][]{}%
  }%
  \providecommand\transparent[1]{%
    \errmessage{(Inkscape) Transparency is used (non-zero) for the text in Inkscape, but the package 'transparent.sty' is not loaded}%
    \renewcommand\transparent[1]{}%
  }%
  \providecommand\rotatebox[2]{#2}%
  \newcommand*\fsize{\dimexpr\f@size pt\relax}%
  \newcommand*\lineheight[1]{\fontsize{\fsize}{#1\fsize}\selectfont}%
  \ifx\svgwidth\undefined%
    \setlength{\unitlength}{345.22989829bp}%
    \ifx\svgscale\undefined%
      \relax%
    \else%
      \setlength{\unitlength}{\unitlength * \real{\svgscale}}%
    \fi%
  \else%
    \setlength{\unitlength}{\svgwidth}%
  \fi%
  \global\let\svgwidth\undefined%
  \global\let\svgscale\undefined%
  \makeatother%
  \begin{picture}(1,0.88889448)%
    \lineheight{1}%
    \setlength\tabcolsep{0pt}%
    \put(0,0){\includegraphics[width=\unitlength,page=1]{refined_subdiv_2.pdf}}%
    \put(0.10117098,0.2109611){\color[rgb]{0,0,0}\makebox(0,0)[lt]{\lineheight{1.25}\smash{\begin{tabular}[t]{l}$(\sigma_\infty, \tau)$\end{tabular}}}}%
    \put(0,0){\includegraphics[width=\unitlength,page=2]{refined_subdiv_2.pdf}}%
    \put(0.72005906,0.49664308){\color[rgb]{0,0,0}\makebox(0,0)[lt]{\lineheight{1.25}\smash{\begin{tabular}[t]{l}$\sigma$\end{tabular}}}}%
    \put(0,0){\includegraphics[width=\unitlength,page=3]{refined_subdiv_2.pdf}}%
    \put(-0.00131536,0.20833655){\color[rgb]{0,0,0}\makebox(0,0)[lt]{\lineheight{1.25}\smash{\begin{tabular}[t]{l}$(\sigma, \tau)$\end{tabular}}}}%
    \put(0.22822166,0.21127855){\color[rgb]{0,0,0}\makebox(0,0)[lt]{\lineheight{1.25}\smash{\begin{tabular}[t]{l}$(\sigma_\infty, \hat \tau)$\end{tabular}}}}%
    \put(0.61453186,0.72171648){\color[rgb]{0,0,0}\makebox(0,0)[lt]{\lineheight{1.25}\smash{\begin{tabular}[t]{l}$\tau$\end{tabular}}}}%
    \put(0,0){\includegraphics[width=\unitlength,page=4]{refined_subdiv_2.pdf}}%
  \end{picture}%
\endgroup%

\caption{The subdivision $\mathcal{J}(T^\circ, T)$}
\label{refinfed_subdiv_2}
\end{center}
\end{figure}

\subsection{The mirror cosheaf}
Let $(\tau,\sigma)\in \mathcal{J}^S(T^\circ,T)$. Since $\sigma_\infty\subset \min(C(\tau))^\vee$, in particular the cone $C(\tau)$ is a subset of $\sigma_\infty^\perp$. Then $\langle C(\tau) \rangle \wedge \mathcal{F}_{p-1}(0,\sigma_{\infty})$ is a submodule of $\mathcal{F}_p(0,\sigma)$, and we can consider the quotient.
\begin{defi}
Let $p\geq 0$. The $p$-th mirror  cosheaf $\mathcal{M}_p$ is defined over $\mathcal{J}(T^\circ,T)$ as follows: 
\begin{itemize}
\item if $\sigma\neq 0$ and $0\in\sigma$, then define 
\begin{equation} \label{eqMp}
\mathcal{M}_p(\tau, \sigma)=\frac{\mathcal{F}_p(0,\sigma)} { \langle C(\tau) \rangle \wedge \mathcal{F}_{p-1}(0,\sigma_{\infty})}
\end{equation}
where $\langle C(\tau) \rangle$ is the submodule generated by $C(\tau) \cap N$. When $p=0$, we define $\mathcal{M}_0(\tau, \sigma) = \Z$.
\ \\ 

\item for all other  elements $(\tau,\sigma) \in  \mathcal{J}(T^\circ,T)$ set
\begin{equation} \label{eq0}
\mathcal{M}_p( \tau, \sigma)=0.
\end{equation}
\end{itemize}
The cosheaf maps are induced by the ones from $\mathcal{F}_p$.
\end{defi}
Notice that the support of $\mathcal{M}_p$ is $\mathcal{J}^S (T^\circ,T)$, and that moreover
\[  \mathcal M_p(\tau, \sigma) = \mathcal F_p(0, \sigma) \quad \text{when } 0\in\sigma \text{ and } \dim \sigma = 1.\]

We have the following main result

\begin{thm} \label{thm:main1} Let $A$ be a commutative ring. For all $q \geq 0$, there are canonical isomorphisms
\begin{equation} \label{FtoM} 
H_q(\mathcal{J}(T^\circ,T); \mathcal F_p\otimes A) \cong H_q(\mathcal{J}(T^\circ,T); \mathcal M_p\otimes A),
\end{equation}
\begin{equation} \label{M_ms}
   H_q(\mathcal{J}(T^\circ,T); \mathcal M_p\otimes A) \cong H_q(\mathcal{J}(T,T^\circ); \mathcal M_{n-p}\otimes A).
\end{equation}
Moreover, they induce a canonical isomorphism
\begin{equation} \label{trop_ms}
H_q(X_{T^\circ,T}; \mathcal F_p\otimes A) \cong H_q(X_{T,T^\circ}; \mathcal F_{n-p}\otimes A).
\end{equation}
\end{thm}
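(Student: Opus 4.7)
The isomorphism \eqref{trop_ms} is a formal consequence of \eqref{FtoM} and \eqref{M_ms}: since $\mathcal F_p$ vanishes on every pair $(\tau,\sigma)$ with $\dim\sigma=0$, one has $H_q(X_{T^\circ,T};\mathcal F_p\otimes A) = H_q(\mathcal J(T^\circ,T);\mathcal F_p\otimes A)$, after which \eqref{FtoM}, then \eqref{M_ms}, then \eqref{FtoM} applied to the mirror pair chain together to yield \eqref{trop_ms}. So the real work is to establish \eqref{FtoM} and \eqref{M_ms}.

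To prove \eqref{FtoM}, my plan is to fit $\mathcal F_p$ and $\mathcal M_p$ into a pair of short exact sequences of cosheaves on $\mathcal J(T^\circ,T)$,
\[ 0 \to \mathcal A_p \to \mathcal F_p \to \mathcal G_p \to 0, \qquad 0 \to \mathcal B_p \to \mathcal G_p \to \mathcal M_p \to 0, \]
in which $\mathcal G_p$ is the extension by zero of $\mathcal F_p|_{\mathcal J^S}$, so that $\mathcal A_p$ is supported on $\mathcal J \smallsetminus \mathcal J^S$ (the cells at infinity), and $\mathcal B_p(\tau,\sigma) = \langle C(\tau)\rangle \wedge \mathcal F_{p-1}(0,\sigma_\infty)$ on $\mathcal J^S$ and vanishes elsewhere. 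Then \eqref{FtoM} reduces to showing that both $\mathcal A_p$ and $\mathcal B_p$ have vanishing poset homology. I intend to do this by filtering each auxiliary cosheaf by a natural combinatorial invariant -- the dimension of $\min C(\tau)$ for $\mathcal A_p$, and a vertex filtration of $\sigma_\infty$ for $\mathcal B_p$ -- and recognising each graded piece as a constant cosheaf supported on a contractible star of cells in the poset, so that its chain complex is acyclic by a standard argument.

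For \eqref{M_ms}, the poset isomorphism $j_{T^\circ,T}$ of Lemma~\ref{lem:involution_p} restricts to an isomorphism $\mathcal J^S(T^\circ,T) \xrightarrow{\sim} \mathcal J^S(T,T^\circ)$ between the supports of $\mathcal M_p$ and $\mathcal M_{n-p}$, so it suffices to build a natural isomorphism $\Psi_{(\tau,\sigma)}: \mathcal M_p(\tau,\sigma) \xrightarrow{\sim} \mathcal M_{n-p}(\sigma_\infty,\hat\tau)$. Unimodularity fixes a canonical generator of $\bigwedge^{n+1} N \cong \Z$, and the reflexivity identities $\langle v_i, u_j\rangle = 1$ for primitive generators $v_i$ of the rays of $\min C(\tau)$ and vertices $u_j$ of $\sigma_\infty$ (proven inside the proof of Lemma~\ref{lem:involution_p}) provide a perfect pairing between the integral tangent lattices of $\hat\tau$ and $\sigma$. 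Contraction with the top form along this pairing identifies $\bigwedge^p \sigma_\infty^\perp$ with $\bigwedge^{n-p}\langle\sigma\rangle^\perp$ and, crucially, sends the submodule $\langle C(\tau)\rangle \wedge \mathcal F_{p-1}(0,\sigma_\infty)$ onto $\langle C(\sigma_\infty)\rangle \wedge \mathcal F_{n-p-1}(0,\tau)$, so that it descends to the required $\Psi_{(\tau,\sigma)}$ on the quotients.

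The main obstacle I foresee is verifying the naturality of $\Psi$ in $(\tau,\sigma)$, i.e.\ its compatibility with the cosheaf structure maps along cover relations in $\mathcal J^S$. Two kinds of covers occur -- those shrinking $\sigma$ and those shrinking $\tau$ -- and under $j_{T^\circ,T}$ they are interchanged, so naturality boils down to showing that contraction with the chosen top form intertwines the projections $N/\langle C(\tau)\rangle \to N/\langle C(\tau')\rangle$ on one side with their $M$-sided analogues on the other. In spirit this is a standard Poincar\'e-duality calculation on exterior algebras, but the interplay with the quotient by $\langle C(\tau)\rangle \wedge \mathcal F_{p-1}(0,\sigma_\infty)$ and with the edge-sum definition of $\mathcal F_p$ makes it delicate; I expect it to be the technical heart of the proof, and to occupy most of the detailed computation.
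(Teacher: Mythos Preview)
Your plan for \eqref{M_ms} is exactly the paper's: the poset bijection $j_{T^\circ,T}$ restricted to $\mathcal J^S$ together with the contraction map $[z]\mapsto[\iota_{v\wedge z}\Omega_N]$ for $v$ a vertex of $\tau$, and you correctly anticipate that the work lies in checking independence of $v$, preservation of the quotienting submodule $\langle C(\tau)\rangle\wedge\mathcal F_{p-1}(0,\sigma_\infty)$, and naturality under the cosheaf maps.

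For \eqref{FtoM} your two-exact-sequence strategy is also the paper's, but your choice of intermediate cosheaf is different and creates a difficulty. First a minor correction: $\mathcal J^S$ is \emph{closed} in $\mathcal J(T^\circ,T)$ (if $(\tau',\sigma')\le(\tau,\sigma)$ with $0\in\sigma$ then $\sigma\subset\sigma'$ forces $0\in\sigma'$), so extension by zero of $\mathcal F_p|_{\mathcal J^S}$ is a \emph{sub}cosheaf of $\mathcal F_p$, not a quotient; your first sequence must read $0\to\mathcal G_p\to\mathcal F_p\to\mathcal A_p\to 0$. More substantively, with this $\mathcal G_p$ the acyclicity of $\mathcal A_p$ and $\mathcal B_p$ is not transparent and your filtration sketch is too vague to carry it: for instance the natural retraction $(\tau,\sigma)\mapsto(\hat\tau,\sigma)$ of $\mathcal J^{ub}$ onto $\mathcal J^\infty$ induces the genuine projection $\mathcal F_p(0,\sigma)\to\mathcal F_p(\hat\tau,\sigma)$ on $\mathcal A_p$, which is not an isomorphism, so a cone-type argument fails. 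The paper instead interpolates through a cosheaf $\mathcal M_p^\Delta$ that agrees with $\mathcal M_p$ on $\mathcal J^S$ and with $\mathcal F_p$ on $\mathcal J^\infty$ (and is the analogous quotient on $\mathcal J^{ub}_0$), giving sequences
\[ 0\to\mathcal R_p\to\mathcal F_p\to\mathcal M_p^\Delta\to 0,\qquad 0\to\mathcal M_p\to\mathcal M_p^\Delta\to\mathcal Q_p\to 0. \]
The point of this particular choice is that the cosheaf maps $\mathcal Q_p(\tau,\sigma)\to\mathcal Q_p(\hat\tau,\sigma)$ and $\mathcal R_p(\tau,\sigma)\to\mathcal R_p(\tau,\hat\sigma)$ \emph{are} isomorphisms (this is precisely what the extra quotient in $\mathcal M_p^\Delta$ buys), so the supports $\mathcal J^{ub}=\mathcal J^{ub}_0\cup\mathcal J^\infty$ and $\mathcal J^0=\mathcal J^{ub}_0\cup\mathcal J^S$ become honest chain-level cones and acyclicity follows directly, with no filtration needed.
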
 
\begin{cor} \label{cor:main1}
If $T$ and $T^\circ$ are convex subdivisions, and $X_{\text{trop}}$ and $X^\circ_{\text{trop}}$ are some correponding tropical hypersurfaces as in Definition  \ref{dual:trop}, then we have the mirror symmetry of the tropical Hodge diamonds
\[ \dim H_q( X_{\text{trop}}; \mathcal F_p\otimes \Q) = \dim H_q(X^{\circ}_{\text{trop}}; \mathcal F_{n-p}\otimes \Q). \]
\end{cor}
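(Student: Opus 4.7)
The plan is to derive Corollary \ref{cor:main1} as a direct specialization of Theorem \ref{thm:main1}. Since the theorem is already stated, the only work is to check that, in the convex case, the combinatorial tropical homology $H_q(X_{T^\circ,T};\mathcal F_p)$ defined via the poset $\mathcal P^1(T^\circ,T)$ agrees with the tropical homology of the actual tropical hypersurface $X_{\text{trop}} \subset \T\Sigma_{T^\circ}$. Then, setting $A=\Q$ and passing to dimensions gives the statement.

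First I would apply the isomorphism \eqref{trop_ms} of Theorem \ref{thm:main1} with $A=\Q$, producing canonical $\Q$-linear isomorphisms
\[ H_q(X_{T^\circ,T};\mathcal F_p\otimes\Q) \cong H_q(X_{T,T^\circ};\mathcal F_{n-p}\otimes\Q). \]
Next I would identify each of these groups with the tropical homology of the corresponding tropical hypersurface. Indeed, when $T$ and $T^\circ$ are convex, the tropical hypersurface $X_{\text{trop}}$ of Definition \ref{dual:trop} induces precisely the polyhedral subdivision of $\T\Sigma_{T^\circ}$ whose face poset is $\mathcal P(T^\circ,T)$, as recalled just after that definition; restricting to the cells carrying tropical weight (those with $\dim\sigma\geq 1$) recovers the poset $\mathcal P^1(T^\circ,T)$ whose geometric realization is $X_{T^\circ,T}$. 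Moreover, the cosheaf $\mathcal F_p$ on $\mathcal P^1(T^\circ,T)$ is by construction the $p$-multitangent cosheaf used in \cite{trop_hom, brugalle2022combinatorial}: at $(0,\sigma)$ it equals $\sum_{\lambda\subset\sigma,\,\dim\lambda=1}\bigwedge^p\lambda^\perp$, which is exactly the multitangent module attached to the corresponding cell of $X_{\text{trop}}$, and the cosheaf maps \eqref{cosheafmaps} are the standard projection maps. Hence the cellular poset homology of $(X_{T^\circ,T},\mathcal F_p)$ coincides with the cellular tropical homology of $(X_{\text{trop}},\mathcal F_p)$, and symmetrically for the dual pair.

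Combining these identifications with the mirror isomorphism yields
\[ H_q(X_{\text{trop}};\mathcal F_p\otimes\Q) \cong H_q(X^\circ_{\text{trop}};\mathcal F_{n-p}\otimes\Q), \]
and taking $\Q$-dimensions gives the stated equality. Conceptually, there is no new obstacle beyond Theorem \ref{thm:main1} itself; the main thing to be careful about is the bookkeeping identifying the combinatorial CW-structure on $X_{T^\circ,T}$ with the face structure that $X_{\text{trop}}$ inherits as a subcomplex of $\T\Sigma_{T^\circ}$, which was arranged precisely for this purpose in Section \ref{combmirror} and in Proposition \ref{prop:tropicaltoricposet}.
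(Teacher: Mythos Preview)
Your proposal is correct and matches the paper's (implicit) approach: the paper states Corollary \ref{cor:main1} immediately after Theorem \ref{thm:main1} without a separate proof, treating it as a direct specialization with $A=\Q$, relying on the identification of $X_{T^\circ,T}$ with $X_{\text{trop}}$ in the convex case set up in Section \ref{combmirror}. Your careful verification that the poset cosheaf $\mathcal F_p$ agrees with the multitangent cosheaf on the tropical hypersurface is exactly the bookkeeping the paper leaves implicit.
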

Combining this equality with \cite{trop_hom} gives another proof of the mirror identity \eqref{mirror:hdg} proved by Batyrev and Borisov in \cite{BB}.
\begin{defi}[Mirror classes] 
Two classes $\gamma\in H_q(\mathcal{P}(T^\circ,T); \mathcal F_p\otimes A)$ and  $\gamma'\in H_q(\mathcal{P}(T,T^\circ); \mathcal F_{n-p}\otimes A)$ are called \emph{mirror classes} if they coincide under isomorphism (\ref{trop_ms}). 
\end{defi}
\subsection{Proof of Theorem \ref{thm:main1}}
We begin by extending the support of $\mathcal{M}_p$ to the whole poset $\mathcal{J}(T^\circ,T)$.
\begin{defi}
Let $p\geq 0$. The cosheaf $\mathcal{M}_p^{\Delta}$ is defined over $\mathcal{J}(T^\circ,T)$ by
\begin{equation} \label{eq1}
\mathcal{M}_p^{\Delta}(\tau, \sigma)=\dfrac{\mathcal{F}_p(0,\sigma )}{\langle C(\tau) \rangle \wedge \mathcal{F}_{p-1}(0,\sigma_\infty)}
\end{equation}
if $\tau\in \partial T^\circ$, and by $\F_p$ otherwise.
Notice that
\[  \mathcal{M}_p^{\Delta}(\tau, \sigma ) = \mathcal{M}_p(\tau, \sigma ) \quad \text{when} \  (\tau, \sigma) \in \mathcal{J}^{S}(T^\circ,T)  \]
and 
\[  \mathcal{M}_p^{\Delta}(\tau, \sigma ) = \mathcal{F}_p(\tau ,\sigma ) \quad \text{if} \  0\in \tau. \]
\end{defi}

By definition, we have an inclusion $\mathcal{M}_p \hookrightarrow \mathcal{M}_p^{\Delta}$ and a surjection $\mathcal F_p \rightarrow \mathcal M_p^{\Delta}$. These give two short exact sequences of cosheaves
\begin{equation} \label{seq:1}
      0 \rightarrow \mathcal M_p \rightarrow \mathcal M_p^{\Delta} \rightarrow \mathcal Q_p \rightarrow 0,
\end{equation}
where $\mathcal Q_p$ is the quotient cosheaf, and 
\begin{equation} \label{seq:2}
 0 \rightarrow \mathcal R_p \rightarrow  \mathcal F_p \rightarrow \mathcal M_p^{\Delta} \rightarrow 0,  
 \end{equation}
where $\mathcal R_p$ is the kernel.  

The quotient $\mathcal{Q}_p$ is given by $\mathcal{Q}_p(\tau, \sigma)=\mathcal{M}_p^{\Delta}(\tau, \sigma )$
if $\sigma\in \partial T$ and is equal to zero otherwise. In particular $\mathcal Q_p$ is supported on 
\[ \mathcal{J}^{ub}(T^\circ,T):=\mathcal{J}(T^\circ,T)\setminus \mathcal{J}^S(T^\circ,T). \]
The kernel $\mathcal{R}_p$ is given by $\mathcal{R}_p(\tau,\sigma)=\langle C(\tau) \rangle \wedge \mathcal{F}_{p-1}(0,\sigma_\infty)$ if $\tau\in \partial T^\circ$ and is equal to zero otherwise. The support of $\mathcal{R}_p$ is 
\[ \mathcal{J}^{0}(T^\circ,T):=\mathcal{J}(T^\circ,T)\setminus \mathcal{J}^\infty(T^\circ,T). \]
We have
\begin{prop} \label{q_iso}
The cosheaves  $\mathcal M_p$,  $\mathcal M_p^{\Delta}$, $\mathcal{Q}_p$ and $\mathcal R_p$ are cosheaves of free $\Z$-modules. Moreover, let $\sigma\in \partial T$ and $\tau\in \partial T^\circ$ such that $\sigma\subset \min(C(\tau))^\vee$. Then the cosheaf maps 
\[ \begin{split}  \mathcal Q_p(\tau, \sigma) & \rightarrow \mathcal Q_p (\hat{\tau}, \sigma), \\
                        \mathcal R_p(\tau, \sigma) & \rightarrow \mathcal R_p (\tau, \hat{\sigma} )
   \end{split}                \] are isomorphisms. 

\end{prop}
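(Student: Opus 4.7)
The plan is to unfold the definitions, exploit the identity $\sigma_\infty=\sigma=\hat\sigma_\infty$ forced by $\sigma\in\partial T$, and reduce both isomorphism claims to a single compatibility coming from reflexivity of $\Delta$. I would begin with freeness. Every $\mathcal F_p(0,\sigma)$ sits inside $\bigwedge^p N$ as a sum of wedge powers of the saturated submodules $\lambda^\perp$ (saturated because unimodularity of $T$ makes each $T\lambda$ a direct summand of $M$), hence is free. When $0\in\tau$, unimodularity of $T^\circ$ makes $\langle C(\tau)\rangle$ a direct summand of $N$, so $\mathcal F_p(\tau,\sigma)\subset\bigwedge^p(N/\langle C(\tau)\rangle)$ is also free. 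On the support of $\mathcal M_p$ (pairs with $0\in\sigma$ and $\tau\in\partial T^\circ$) I would choose a basis of $M$ starting with the vertices of $\sigma_\infty$ together with its dual basis of $N$, in which both $\mathcal F_p(0,\sigma)$ and $\langle C(\tau)\rangle\wedge\mathcal F_{p-1}(0,\sigma_\infty)$ are spanned by coordinate wedges, and conclude that the submodule is saturated, so $\mathcal M_p$ is free. Freeness of $\mathcal Q_p$ and $\mathcal M_p^\Delta$ is then immediate, and freeness of $\mathcal R_p$ follows from the short exact sequence \eqref{seq:2}, since a submodule of a free module is free.

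For the cosheaf map $\mathcal Q_p(\tau,\sigma)\to\mathcal Q_p(\hat\tau,\sigma)$, the hypothesis $\sigma\in\partial T$ yields $\mathcal M_p(\tau,\sigma)=\mathcal M_p(\hat\tau,\sigma)=0$, so the map reduces to the cosheaf map $\mathcal M_p^\Delta(\tau,\sigma)\to\mathcal M_p^\Delta(\hat\tau,\sigma)=\mathcal F_p(\hat\tau,\sigma)$ induced by the quotient $N\to N/\langle C(\tau)\rangle$. The decisive geometric input is that for $\sigma\subset\min(C(\tau))^\vee$, reflexivity of $\Delta$ forces $\langle v,w\rangle=1$ for every primitive ray generator $v$ of $\min(C(\tau))$ and every vertex $w$ of $\sigma$; consequently $\langle c,w-w'\rangle=0$ for every $c\in\langle C(\tau)\rangle$ and every pair of vertices $w,w'$ of $\sigma$. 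This gives the containment $\langle C(\tau)\rangle\subset\lambda^\perp$ for each edge $\lambda\subset\sigma$, so the kernel of each summand $\bigwedge^p\lambda^\perp\to\bigwedge^p(\lambda^\perp/\langle C(\tau)\rangle)$ is exactly $\langle C(\tau)\rangle\wedge\bigwedge^{p-1}\lambda^\perp$. Summing over edges of $\sigma$ and using $\sigma_\infty=\sigma$ identifies the total kernel with $\langle C(\tau)\rangle\wedge\mathcal F_{p-1}(0,\sigma_\infty)$, which is precisely the denominator defining $\mathcal M_p^\Delta(\tau,\sigma)$; the induced map on quotients is therefore an isomorphism.

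The isomorphism $\mathcal R_p(\tau,\sigma)\to\mathcal R_p(\tau,\hat\sigma)$ is nearly tautological. The hypothesis $\sigma\in\partial T$ gives $\sigma_\infty=\sigma$, and the definition $\hat\sigma=\conv(0,\sigma)$ gives $\hat\sigma_\infty=\sigma$ as well, so both $\mathcal R_p(\tau,\sigma)$ and $\mathcal R_p(\tau,\hat\sigma)$ equal $\langle C(\tau)\rangle\wedge\mathcal F_{p-1}(0,\sigma)$. The cosheaf map of $\mathcal R_p$ is the restriction of the inclusion $\mathcal F_p(\tau,\sigma)\hookrightarrow\mathcal F_p(\tau,\hat\sigma)$ to this common submodule, hence the identity. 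The only step of the argument that is not a direct unfolding is the saturation claim in the freeness of $\mathcal M_p$ on $\mathcal J^S(T^\circ,T)$, which is where unimodularity of both $T$ and $T^\circ$ is genuinely used; I expect this basis computation to be the main obstacle, while everything else follows once the reflexivity identity $\langle v,w\rangle=1$ is in hand.
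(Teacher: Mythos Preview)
Your proposal is correct. For freeness and for the $\mathcal R_p$ isomorphism you follow the paper's route: an adapted basis of $M$ (vertices of $\sigma_\infty$ followed by a completion through $\langle C(\tau)\rangle^\perp$) makes all the modules in sight coordinate subspaces of $\bigwedge^\bullet N$, and the $\mathcal R_p$ map is literally the identity because $\hat\sigma_\infty=\sigma_\infty$. For the $\mathcal Q_p$ isomorphism, however, you take a different and more conceptual path. The paper writes down explicit multi-index bases for $\mathcal M_p^\Delta(\tau,\sigma)$ and for $\mathcal F_p(\hat\tau,\sigma)$ in the same adapted basis and observes they coincide. You instead show directly that the cosheaf map $\mathcal F_p(0,\sigma)\to\mathcal F_p(\hat\tau,\sigma)$ is surjective with kernel $\langle C(\tau)\rangle\wedge\mathcal F_{p-1}(0,\sigma)$, using only the reflexivity identity $\langle C(\tau)\rangle\subset\lambda^\perp$. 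This buys you a cleaner, coordinate-free argument; the price is that the step ``summing over edges identifies the total kernel'' is not automatic (the kernel of a map on a sum of submodules need not be the sum of the individual kernels). It does hold here because the common splitting $N=\langle C(\tau)\rangle\oplus N'$ induces a grading of $\bigwedge^p N$ that each $\bigwedge^p\lambda^\perp$ respects, so the kernel computation can be done degree by degree; you should make this explicit when writing up. Surjectivity of the map, which you use implicitly, follows at once from $\langle C(\tau)\rangle\subset\lambda^\perp$ on each summand.
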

\begin{proof}   
It is easy to see that $\mathcal F_p$ is a cosheaf of free $\Z$-modules and therefore also $\mathcal R_p$.  It is then enough to show that $\mathcal M^{\Delta}_p$ is a cosheaf of free modules. 

Let $(\tau, \sigma) \in \mathcal{J}(T^\circ,T)$ with $0 \in \sigma$ and $\tau \in \partial T^{\circ}$.  We begin by describing bases for $\F_p(0, \sigma)$,   $\F_p(0, \sigma_{\infty})$ and $\langle C(\tau) \rangle \wedge \mathcal{F}_{p-1} (0,\sigma_{\infty})$. The cell $\sigma$ is a unimodular simplex, let $\dim \sigma= s+1$. Choose a vertex $e_0$ of $\sigma_{\infty}$ and fix $s$ edges of $\sigma_{\infty}$ containing this vertex. Denote by $e_1,\cdots,e_{s}$ a choice of primitive vectors directing those edges.  Complete the set $e_1, \ldots, e_{s}$ to a basis of $\langle C(\tau)\rangle^\perp$ denoted by $e_1,\cdots,e_m$, and complete the set $e_0, \ldots, e_m$ to a basis of $M$ denoted by $e_0,\cdots,e_{n}$. Consider the dual basis $e_0^*,\cdots,e_{n}^*$ of $N$. We have 
$$\langle C(\tau) \rangle = \langle e_0^*, e_{m+1}^*,\cdots,e_{n}^*\rangle.$$  We use multi-indices $I=\{ i_1, \ldots, i_p \} \subseteq \{0, \ldots, n \}$ with $i_1 < i_2 <  \ldots < i_p$ and we denote by $e_I^*$ the $p$-vector $e_{i_1}^* \wedge \ldots \wedge e_{i_p}^*$.  We also denote by $|I|$ the cardinality of $I$ and by $I^c$ the complement of $I$ in $\{0, \ldots, n \}$. A basis of $\F_p(0, \sigma)$ consists of $p$-vectors of the type
$$
e_I^*, \quad  \ I^c \cap \{0, \ldots, s \} \neq \emptyset.
$$
Similarly a basis of $\F_{p-1}(0, \sigma_{\infty})$ is given by the $p-1$-vectors
$$
e_J^*, \quad  J^c \cap \{1, \ldots, s \} \neq \emptyset.
$$
Then a basis of $\langle C(\tau) \rangle \wedge \mathcal{F}_{p-1}(0,\sigma_{\infty})$ is given by $p$-vectors of the type
$$
e_I^*,  \quad  I \cap \{0, m+1, \ldots, n \} \neq \emptyset \ \text{and} \ I^c \cap \{1, \ldots, s \} \neq \emptyset.
$$
This clearly shows that $\mathcal M_{p}^{\Delta}$ is free. 

Assume now that $\tau \in \partial T^\circ$ and $\sigma \in \partial T$ (in particular $\sigma = \sigma_{\infty}$). With the same notation as above, the quotient $N/ \langle C(\tau) \rangle$ can be identified with $\langle e_1^*,\cdots,e_m^*\rangle$.   By definition 
$$\mathcal Q_p (\hat{\tau}, \sigma) = \F_p(\hat{\tau}, \sigma)=\sum_{i=1}^{s} \bigwedge^p (e_i^\perp / \langle C(\tau) \rangle ), $$
so a basis of $Q_p(\hat{\tau}, \sigma)$ can be identified with $p$-vectors of the type
$$
e_I^*, \quad  I \subseteq \{1, \dots, m \} \ \text{and} \ I^c \cap \{1, \ldots, s \} \neq \emptyset.
$$
where now $I^c$ denotes the complement of $I$ in  $\{1, \dots, m \}$. 
Now notice that also a basis of 
\[ \mathcal Q_p (\tau, \sigma) = \mathcal M_p^{\Delta}(\tau, \sigma) =  \dfrac{\mathcal{F}_p(0,\sigma )}{\langle C(\tau) \rangle \wedge \mathcal{F}_{p-1}(0,\sigma)}\]
can be identified by the same set of $p$-vectors, since multi-indices $I$ such that $I \cap \{ 0, m+1, \ldots, n \} \neq \emptyset$ are killed by the quotient. This implies that $\mathcal Q_p(\tau, \sigma) \rightarrow \mathcal Q_p (\hat{\tau}, \sigma)$ is an isomorphism.  The case of $\mathcal R_p$ is trivial, indeed by definition $\mathcal{R}_p(\tau,\sigma) = \mathcal{R}_p(\tau, \hat{\sigma})$.

\end{proof}

We also have

\begin{cor} For any commutative ring $A$ we have short exact sequences
\[ \begin{split} 
     \ & 0 \rightarrow \mathcal M_p \otimes A \rightarrow \mathcal M_p^{\Delta} \otimes A \rightarrow \mathcal Q_p \otimes A \rightarrow 0, \\
\ & 0 \rightarrow \mathcal R_p \otimes A \rightarrow  \mathcal F_p  \otimes A \rightarrow \mathcal M_p^{\Delta} \otimes A \rightarrow 0, 
 \end{split}
 \] 
 and isomorphisms
 \[ \begin{split}  \mathcal Q_p(\tau, \sigma) \otimes A & \rightarrow \mathcal Q_p (\hat{\tau}, \sigma) \otimes A, \\
                        \mathcal R_p(\tau, \sigma) \otimes A & \rightarrow \mathcal R_p (\tau, \hat{\sigma} )\otimes A.
   \end{split}                \] 
\end{cor}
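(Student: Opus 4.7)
The plan is simply to apply the functor $-\otimes_{\Z} A$ pointwise to the short exact sequences \eqref{seq:1}--\eqref{seq:2} and to the cosheaf transition-map isomorphisms of Proposition \ref{q_iso}, invoking flatness to preserve exactness. For an arbitrary commutative ring $A$, tensor product is not exact in general, so preservation of a short exact sequence is not automatic; however, Proposition \ref{q_iso} produces freeness at every point of the poset $\mathcal{J}(T^{\circ},T)$, and free $\Z$-modules are flat.

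Concretely, I will evaluate \eqref{seq:1} and \eqref{seq:2} at each $(\tau,\sigma) \in \mathcal{J}(T^{\circ},T)$. In each pointwise short exact sequence the rightmost term is a free $\Z$-module by Proposition \ref{q_iso} (either $\mathcal{Q}_p(\tau,\sigma)$ or $\mathcal{M}_p^{\Delta}(\tau,\sigma)$), so the sequence of $\Z$-modules splits at that point. A split short exact sequence remains (split and hence) exact after tensoring with any module, so the pointwise $A$-sequence is exact. Reassembling over all elements of $\mathcal{J}(T^{\circ},T)$ gives the two short exact sequences of $A$-cosheaves claimed in the statement; the naturality of $-\otimes_{\Z} A$ ensures that the tensored cosheaf transition maps are the tensor products of the original ones, so the diagrams of cosheaves commute. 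For the isomorphisms $\mathcal{Q}_p(\tau,\sigma)\otimes A \to \mathcal{Q}_p(\hat{\tau},\sigma)\otimes A$ and $\mathcal{R}_p(\tau,\sigma)\otimes A \to \mathcal{R}_p(\tau,\hat{\sigma})\otimes A$, apply $-\otimes_{\Z} A$ to the corresponding isomorphisms of $\Z$-modules from Proposition \ref{q_iso}; the tensor product of an isomorphism is an isomorphism.

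There is no substantive obstacle beyond what Proposition \ref{q_iso} already provides. The only thing I would explicitly double-check is that the transition maps of the tensored cosheaves are indeed the tensor products of the originals (so that exactness of \emph{cosheaves}, not just of modules pointwise, follows), but this is a direct consequence of the naturality of the tensor product functor.
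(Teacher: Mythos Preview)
Your proposal is correct and follows essentially the same approach as the paper: both rely on Proposition~\ref{q_iso} to conclude that the rightmost term of each short exact sequence is a free (hence flat) $\Z$-module, so that tensoring with $A$ preserves exactness, and both obtain the isomorphisms by simply applying $-\otimes_{\Z} A$ to the $\Z$-module isomorphisms of Proposition~\ref{q_iso}. Your version is slightly more explicit in phrasing the flatness argument via splitting, but the substance is the same.
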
 

\begin{proof} It follows from the fact that if we have a short exact sequence of $\Z$-modules where the last term is a flat module, then tensoring with any commutative ring preserves exactness. In our case the modules are free and therefore flat. 
\end{proof}

\begin{prop} \label{hom:QR} For all $k \geq 0$ and any commutative ring $A$ we have
\[ H_k (\mathcal{J}(T^\circ,T); \mathcal Q_p \otimes A) \cong 0, \] and 
\[ H_k (\mathcal{J}(T^\circ,T); \mathcal R_p \otimes A) \cong 0. \]
\end{prop}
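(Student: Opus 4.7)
My plan is to prove both vanishings by algebraic discrete Morse theory, in the style of Sk\"oldberg or J\"ollenbeck--Welker: for each of the cosheaves $\mathcal{Q}_p\otimes A$ and $\mathcal{R}_p\otimes A$ I will exhibit a \emph{perfect} matching on its support, along whose edges the cosheaf map is an isomorphism (this is Proposition~\ref{q_iso}, preserved under tensoring with $A$ thanks to freeness), and then I will check that no closed alternating V-path exists. The standard consequence is that the associated chain complex is contractible and its homology vanishes.

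\textbf{The matching for $\mathcal{Q}_p$.} The support is $\mathcal{J}^{ub}(T^\circ,T)=\{(\tau,\sigma):\sigma\in\partial T,\ \tau\neq 0\}$. I match every element with its companion obtained by toggling the presence of the vertex $0$ in the first coordinate: if $\tau\in\partial T^\circ$ I pair $(\tau,\sigma)$ with $(\hat\tau,\sigma)$, where $\hat\tau=\operatorname{conv}(0,\tau)\in T^\circ$; if $0\in\tau$, I write $\tau=\hat{\tau_\infty}$ and pair $(\tau,\sigma)$ with $(\tau_\infty,\sigma)$. The two rules describe the same unordered pair, yielding a perfect matching in which the ``upper'' (higher-dimensional) element has $0\notin\tau$ and the ``lower'' element has $0\in\tau$.

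\textbf{Acyclicity of the matching.} Call an element type $\alpha$ if its first coordinate contains $0$, and type $\beta$ otherwise; each matched pair has one element of each type. A discrete Morse V-path is an alternating sequence $\alpha_0,\beta_0,\alpha_1,\beta_1,\dots$ with $\alpha_i$ matched to $\beta_i$ and $\alpha_{i+1}$ an \emph{unmatched} cover of $\beta_i$. From a type $\beta$ element $(\tau,\sigma)$, the unmatched covers that land in the support of $\mathcal{Q}_p$ have the form $(\tau\cup\{v\},\sigma)$ with $v\neq 0$ or $(\tau,\sigma\cup\{w\})$ with $w\neq 0$ (the option $w=0$ would force $\sigma\cup\{0\}=\hat\sigma\notin\partial T$, leaving the support). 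In both cases the first coordinate stays free of $0$, so the target is again type $\beta$---never type $\alpha$. Thus no V-path extends past its first matched step, no V-cycle exists, and algebraic discrete Morse theory gives $H_k(\mathcal{J}(T^\circ,T);\mathcal{Q}_p\otimes A)=0$ for every $k$.

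\textbf{The case of $\mathcal{R}_p$ and main obstacle.} The argument for $\mathcal{R}_p$ is entirely symmetric in the two coordinates. On its support $\mathcal{J}^0(T^\circ,T)=\{(\tau,\sigma):\tau\in\partial T^\circ\}$, I match $(\tau,\sigma)$ with $(\tau,\hat\sigma)$ when $\sigma\in\partial T$, and with $(\tau,\sigma_\infty)$ when $0\in\sigma$; Proposition~\ref{q_iso} again gives matched isomorphisms, and the $\alpha$/$\beta$ analysis transfers verbatim after swapping the two coordinates. The main content of the proof is the combinatorial acyclicity check, which rests on the clean observation that the matching acts only in one coordinate by toggling the vertex $0$ while every unmatched cover staying in the support preserves that coordinate's $0$-status; once that is secured, the rest is a direct application of the algebraic discrete Morse theorem, so I do not anticipate further difficulty.
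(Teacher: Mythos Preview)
Your argument is correct and is essentially the same as the paper's, just packaged in the language of algebraic discrete Morse theory rather than as an explicit null-homotopy. The paper decomposes the support of $\mathcal Q_p$ as $\mathcal J^{ub}_0 \cup \mathcal J^\infty$ (your types $\beta$ and $\alpha$), observes that $\Pi_\infty\circ\partial:\mathcal C^{ub}_0\to\mathcal C_\infty$ is an isomorphism (which is exactly your statement that the matched edges $(\tau,\sigma)\leftrightarrow(\hat\tau,\sigma)$ carry isomorphisms, together with the fact that each $\beta$-cell covers a unique $\alpha$-cell), and then writes down the contracting homotopy $L=(\Pi_\infty\circ\partial)^{-1}$ by hand. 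Your Morse matching produces the same contracting homotopy via the general machinery; the acyclicity check you give is precisely what makes the paper's direct verification that $\partial L(\gamma_\infty)=\gamma$ go through. The paper's version is more self-contained, while yours makes the structural reason for the vanishing more transparent.

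One small wording issue: when you write ``$\alpha_{i+1}$ an unmatched cover of $\beta_i$'' you mean an element \emph{covered by} $\beta_i$ (as your subsequent examples $(\tau\cup\{v\},\sigma)$ and $(\tau,\sigma\cup\{w\})$ confirm). This does not affect the mathematics.
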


\begin{proof}
For simplicity we denote $\mathcal Q_p \otimes A$ and  $\mathcal R_p \otimes A$ by $\mathcal Q_p$ and $\mathcal R_p$. Recall that the support of $\mathcal{Q}_p$ is 
$\mathcal{J}^{ub}(T^\circ,T)$. Introduce
$$
\mathcal{J}_0^{ub}=\{ (\tau, \sigma) \, | \, \sigma\in \partial T \ \text{and} \ \tau\in \partial T^\circ \}
$$
and recall that
$$
\mathcal{J}^\infty(T^\circ,T)=\{ (\tau, \sigma) \, | \, \sigma\in \partial T \ \text{and} \ 0\in \tau \}
$$
One has $\mathcal{J}^{ub}(T^\circ,T)=\mathcal{J}_0^{ub}\cup \mathcal{J}^\infty$, and there is a bijection between $\mathcal{J}_0^{ub}$ and $\mathcal{J}_\infty$ given by $(\tau,\sigma) \mapsto (\hat{\tau},\sigma)$. Denote by $\mathcal{C}^{ub}$ the complex of chains $\mathcal{C}(\mathcal{J}^{ub}(T^\circ,T),\mathcal{Q}_p)$, and similarly by $\mathcal{C}_0^{ub}$ the set of chains supported on $\mathcal{J}_0^{ub}$ and by $\mathcal{C}_\infty$ the complex $\mathcal{C}(\mathcal{J}^\infty (T^\circ,T),\mathcal{Q}_p)$. Notice that $\mathcal{C}_0^{ub}$ is not closed by taking the boundary. As a module we have a decomposition 
\[ \mathcal C^{ub} = \mathcal C_{0}^{ub} \oplus \mathcal C_{\infty} \]
and we denote by $\Pi_{0}$ and $\Pi_{\infty}$ the projections onto the respective component. 
Lemma \ref{q_iso} implies that 
\begin{equation} \label{pi_bound} 
    \Pi_{\infty} \circ \partial: \mathcal C_0^{ub} \rightarrow \mathcal C_{\infty}
\end{equation}
is an isomorphism.  Denote by $L: \mathcal C_{\infty} \rightarrow \mathcal C_0^{ub}$ its inverse. 
 Suppose now that $\gamma \in \mathcal C^{ub}$ is closed and decompose it as
 \[ \gamma = \gamma_0 + \gamma_{\infty}, \]
 its $\mathcal C_0^{ub}$ and $\mathcal C_{\infty}$ components. Then 
 \[ \partial \gamma_0 = - \partial \gamma_{\infty} \]
  Let 
 \[ \beta = L(\gamma_{\infty}) \]
 We claim that 
 \[ \partial \beta = \gamma. \]
First of all, decomposing $\partial \beta$ and using the fact that $L$ is the inverse of $\Pi_{\infty} \circ \partial$ we have
\[ \partial \beta = \Pi_{0}(\partial \beta) + \gamma_{\infty}. \]
Taking the boundary again, we obtain 
\[  \partial ( \Pi_{0} (\partial \beta)) = - \partial \gamma_{	\infty} = \partial \gamma_0, \]
i.e. 
\[ \partial ( \Pi_{0} (\partial \beta) - \gamma_0) = 0. \]
Now the fact that \eqref{pi_bound} is an isomorphism implies that 
\[  \Pi_{0} (\partial \beta) = \gamma_0. \]
Hence 
\[ \partial \beta = \gamma, \] and so  any closed chain in $\mathcal{C}^{ub}$ is exact.
The proof for $\mathcal{R}_p$ goes along the exact same lines, by considering the decomposition of $\mathcal{J}^0$ into $\mathcal{J}^{ub}_0 \cup \mathcal{J}^S$.
\end{proof}
Then, from the long exact sequences in homology associated to the above short exact sequences, we obtain canonical isomorphisms 
 \[ H_q (\mathcal{J}(T^\circ,T); \mathcal M_p \otimes A) \cong H_q (\mathcal{J}(T^\circ,T); \mathcal M^{\Delta}_p \otimes A) \cong  H_q (\mathcal{J}(T^\circ,T); \mathcal F_p \otimes A) \]
 for all $q \geq 0$. This proves \eqref{FtoM}. 
 
 \begin{ex} \label{inf:sph}
 Suppose $\gamma_{\infty} \in \mathcal{C}(\mathcal{J}^\infty (T^\circ,T),\mathcal{F}_p)$ is a cycle (supported at infinity) defining a class in $H_{\ast}(X_{T^\circ,T}, \mathcal F_p)$. Let 
 \[ \gamma_{\infty} = \sum_{(\tau, \sigma) \in \mathcal{J}^\infty (T^\circ,T)} (\tau, \sigma) \otimes v_{(\tau, \sigma)}, \]
 Consider the chain in $\mathcal{C}(\mathcal{J}^{ub}_{0} (T^\circ,T),  \mathcal M_p^{\Delta})$ given by
 \[ C =  \sum (\tau_{\infty}, \sigma) \otimes \tilde v_{(\tau, \sigma)}, \]
 where $\tilde v_{(\tau, \sigma)} \in \mathcal M_p^{\Delta}(\tau_{\infty}, \sigma)$ is the unique element corresponding to $v_{(\tau, \sigma)}$ under the isomorphism of Proposition \ref{q_iso}. Using the notation from Proposition \ref{hom:QR} we have $C = L (\gamma_{\infty})$. In particular 
 the boundary of $C$ is given by 
 \[\partial C = \gamma_{\infty} + \gamma_{S} \]
 where 
 \[ \gamma_{S} = \sum (\tau_{\infty}, \hat \sigma) \otimes \tilde v_{(\tau, \sigma)}\]
 is a cycle in $\mathcal{C}(\mathcal{J}^S (T^\circ,T),\mathcal{M}_p)$ defining the class $[\gamma_S] \in H_\ast( (\mathcal{J} (T^\circ,T), \mathcal M_p)$ corresponding to $[\gamma_{\infty}]$ under the isomorphism  \eqref{FtoM}.

 \end{ex}

Let us now prove \eqref{M_ms}.  It will be enough to prove it over $\Z$. Recall that the support of $\mathcal{M}_p$ is the subposet $\mathcal{J}^S(T^\circ,T)$, and that the bijection $j_{T^\circ,T}$ \eqref{mirror_subdiv} restricts to a bijection from $\mathcal{J}^S(T^\circ,T)$ to $\mathcal{J}^S(T,T^\circ)$ which sends $(\tau, \sigma) \in \mathcal{J}^S(T^\circ,T)$  to $(\sigma_{\infty}, \hat{\tau}) \in \mathcal{J}^S(T, T^\circ)$.  We now prove a stronger statement than  \eqref{M_ms}, namely that the pairs $(\mathcal{J}^S(T^\circ,T), \mathcal M_p)$ and $( \mathcal{J}^S(T, T^\circ) , \mathcal M_{n-p})$ are canonically isomorphic. By definition we have 
\[ \mathcal M_p(\tau,\sigma) = \frac{\mathcal{F}_p(0,\sigma)} { \langle C(\tau) \rangle \wedge \mathcal{F}_{p-1}(0,\sigma_{\infty})}\]
 and
 \[ \mathcal M_{n-p}(\sigma_\infty,  \hat{\tau}) =  \frac{\mathcal{F}_{n-p}(0,\hat{\tau})} { \langle C(\sigma) \rangle \wedge \mathcal{F}_{n-p-1}(0,\tau)}. \]
Given an orientation on $N_\R$, there exists a unique integral $(n+1)$-form $\Omega_{N}\in \bigwedge^{n+1} M$ compatible with the orientation. Given $z \in \mathcal{F}_p(0,\sigma)$, we denote by $[z]$ its class in $\mathcal M_p(\tau,\sigma)$. Let $v \in N$ be a vertex of $\tau$. Consider the map 
\begin{equation} \label{contr_map}
\begin{split}
 \mathcal M_p(\tau, \sigma) & \rightarrow \mathcal M_{n-p}(\sigma_\infty,  \hat{\tau}) \\
  [z] & \mapsto [\iota_{v \wedge z} \Omega_N ]
  \end{split}
\end{equation}
where $\iota$ denotes the contraction operator.  We have the following
\begin{prop}  After fixing an orientation of $N_\R$, for all $(\tau, \sigma) \in \mathcal J^S(T^\circ,T)$, the map \eqref{contr_map} is well defined and gives an isomorphism
 \[ \mathcal M_p(\tau, \sigma) \cong \mathcal M_{n-p}(\sigma_\infty,  \hat{\tau}) \] 
 independent of the choice of $v$. Moreover it is compatible with cosheaf maps, i.e. for any inclusion $(\tau_1,\sigma_1) \hookrightarrow (\tau,\sigma)$ the following diagram commutes 
\[ \begin{tikzcd}
  \mathcal M_p(\tau,\sigma) \arrow[r] \arrow[d] & \mathcal M_p(\tau_1,\sigma_1)  \arrow[d] \\
  \mathcal M_{n-p}(\sigma_\infty,\hat{\tau} )  \arrow[r] & \mathcal M_{n-p}((\sigma_1)_\infty,\hat{\tau}_{1}).
 \end{tikzcd} \]
 where the horizontal maps are the cosheaf maps. The same is true after tensoring with a commutative ring $A$.
\end{prop}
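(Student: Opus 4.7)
The key observation is that $\iota_{v \wedge z}\Omega_N = \iota_z(\iota_v \Omega_N)$ automatically lies in $\bigwedge^{n-p} v^{\perp} \subseteq \mathcal F_{n-p}(0, \hat{\tau})$, because $v$ is a vertex of $\tau$ and hence an edge direction of $\hat{\tau}$ from $0$, so $v^{\perp}$ is one of the summands defining $\mathcal F_{n-p}(0, \hat{\tau})$. It then remains to verify that (a) the formula sends the defining submodule $\langle C(\tau)\rangle \wedge \mathcal F_{p-1}(0, \sigma_\infty)$ of the source into $\langle C(\sigma_\infty)\rangle \wedge \mathcal F_{n-p-1}(0, \tau)$ of the target, (b) the class $[\iota_{v \wedge z}\Omega_N]$ is independent of $v$, (c) the resulting map is an isomorphism, and (d) it commutes with the cosheaf restriction maps. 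All four points can be handled through a basis adapted to $(\tau, \sigma)$, exactly in the style of the proof of Proposition \ref{q_iso}: fix a vertex $e_0 \in \sigma_\infty$ and primitive edge directions $e_1, \ldots, e_s \in M$ of $\sigma_\infty$ from $e_0$, extend $e_1, \ldots, e_s$ to a basis $e_1, \ldots, e_m$ of $\langle C(\tau)\rangle^{\perp}$, complete it to a basis $e_0, e_1, \ldots, e_n$ of $M$, and orient so that $\Omega_N = e_0 \wedge \cdots \wedge e_n$. In the dual basis $\langle C(\tau)\rangle = \langle e_0^*, e_{m+1}^*, \ldots, e_n^*\rangle$ and $\langle C(\sigma_\infty)\rangle = \langle e_0, e_1, \ldots, e_s\rangle$, and the reflexivity argument from the proof of Lemma \ref{lem:involution_p} forces every vertex $v$ of $\tau$ to take the form $v = e_0^* + \sum_{j > m}\alpha_j e_j^*$.

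For (a) I would reduce to checking on generators: each basis element $e_I^*$ of $\langle C(\tau)\rangle \wedge \mathcal F_{p-1}(0, \sigma_\infty)$ (with $I \cap \{0, m+1, \ldots, n\} \neq \emptyset$ and $I^c \cap \{1, \ldots, s\} \neq \emptyset$, by the proof of Proposition \ref{q_iso}) is sent, up to sign, to a $\Z$-combination of basis $(n-p)$-vectors $e_J \in \bigwedge^{n-p} M$ each of which factors through some generator $e_k$ of $\langle C(\sigma_\infty)\rangle$ (with $k \in \{0, \ldots, s\}$) while the remaining $(n-p-1)$-factor lies in $\mu^{\perp}$ for an edge $\mu$ of $\tau$; this identifies the image as an element of $\langle C(\sigma_\infty)\rangle \wedge \mathcal F_{n-p-1}(0, \tau)$. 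Part (b) then follows because $v - v' \in \langle e_{m+1}^*, \ldots, e_n^*\rangle \subseteq \langle C(\tau)\rangle$ for any two vertices $v, v'$ of $\tau$, so $\iota_{(v-v') \wedge z}\Omega_N$ is covered by the relations of (a) and vanishes in the target quotient.

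For (c), source and target are free $\Z$-modules of equal finite rank, and composing with the symmetric map $\psi: \mathcal M_{n-p}(\sigma_\infty, \hat{\tau}) \to \mathcal M_p(\tau, \sigma)$, $[w] \mapsto [\iota_{v' \wedge w}\Omega_M]$ (with $v' \in M$ a vertex of $\sigma_\infty$ and $\Omega_M \in \bigwedge^{n+1} N$ dual to $\Omega_N$), yields $\pm\,\mathrm{id}$ by the standard double-contraction identity $\iota_{\iota_\alpha \Omega_N}\Omega_M = \pm\, \alpha$; hence the map is an isomorphism. For (d), given $(\tau_1, \sigma_1) \hookrightarrow (\tau, \sigma)$ one picks a vertex $v$ of $\tau_1$, automatically a vertex of $\tau$; both vertical maps in the diagram are then given by the single formula $\iota_{v \wedge \cdot}\Omega_N$, and the square commutes because this contraction is compatible with the further quotient projections $N/\langle C(\tau)\rangle \twoheadrightarrow N/\langle C(\tau_1)\rangle$ and with their mirror analogues that define the horizontal cosheaf restriction maps. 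All of this persists after tensoring with $A$ because every module involved is free of finite rank. I expect the main obstacle to be the index-bookkeeping in step (a), where one has to match the two quotient submodules via a combinatorial identity on complementary index sets; this is the point where the ``mirror'' nature of the construction genuinely uses the reflexivity of $\Delta$ through the pairing relations proved in Lemma \ref{lem:involution_p}.
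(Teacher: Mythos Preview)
Your overall strategy matches the paper's: establish (a) that the relation submodule maps to the relation submodule, (b) independence of $v$, (c) bijectivity via the symmetric inverse with $\Omega_M$, and (d) naturality with respect to cosheaf maps. Parts (b), (c) are essentially identical to what the paper does, and your double-contraction argument for (c) is exactly the one used there.

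The substantive difference is in step (a). You work in the basis of Proposition~\ref{q_iso}, which is adapted to $\sigma_\infty$ (one vertex $e_0$ plus edge directions $e_1,\dots,e_s$) but encodes $\tau$ only through $\langle C(\tau)\rangle=\langle e_0^*,e_{m+1}^*,\dots,e_n^*\rangle$. The difficulty you correctly anticipate is that to land in $\langle C(\sigma_\infty)\rangle\wedge\mathcal F_{n-p-1}(0,\tau)$ you must identify an \emph{edge} of $\tau$ whose perp contains the residual factor, and your basis does not record the edges of $\tau$ at all --- only the span of its vertices. The paper sidesteps this by choosing a basis $f_1,\dots,f_{n+1}$ of $M$ whose first $q$ elements are the \emph{vertices} of $\sigma_\infty$, and then using the actual vertices $e_1,\dots,e_s$ of $\tau$ (not just a basis of $\langle C(\tau)\rangle$) together with the reflexivity relations $\langle f_i,e_j\rangle=1$. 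The verification then proceeds by explicit Leibniz-rule expansion of $\iota_{e_1}\iota_{e_2}(f_1\wedge f_2\wedge f_{I'})$, producing terms like $(f_1-f_2)\wedge\iota_{e_1-e_2}f_{I'}$ in which both the $\langle C(\sigma)\rangle$-factor and the edge-of-$\tau$ perp are manifest. Your index-bookkeeping outline does not yet reach this point, and in your basis it would be awkward to produce the required edge of $\tau$.

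One small slip in (d): the inclusion $(\tau_1,\sigma_1)\hookrightarrow(\tau,\sigma)$ means $\tau\subset\tau_1$, so you should pick $v$ a vertex of $\tau$, which is then automatically a vertex of $\tau_1$ --- not the other way round. With the correct direction your naturality argument is fine and is the paper's argument verbatim.
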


\begin{proof} Recall that $(\tau, \sigma) \in \mathcal{J}^S(T^\circ,T)$ implies that $\tau \in \partial T^\circ$ and $0 \in \sigma$. 
We will use the following essential properties of the contraction operator. First of all it satisfies the Leibniz property, i.e. for any vector $v$ and forms $\alpha$ and $\beta$ we have
\[ \iota_{v}(\alpha \wedge \beta) = (\iota_v \alpha) \wedge \beta + (-1)^{\deg \alpha} \alpha \wedge (\iota_v \beta).\]
Next, for any vector $v \in N$ we have
\begin{equation} \label{keriota}
   \bigwedge^p v^{\perp} = \left \{ \omega \in \bigwedge^p M\, | \, \iota_v \omega = 0  \right \}. 
\end{equation} 

Let us first prove that the map is well defined. Notice that
\[ \iota_v (\iota_{v \wedge z} \Omega_N) = 0 \]
In particular $ \iota_{v \wedge z} \Omega_N \in \bigwedge^{n-p} v^{\perp}$. Since $v$ is a vertex of $\tau$, it is also tangent to a one dimensional edge of $\hat \tau$. This proves that 
\[ \iota_{v \wedge z} \Omega_N \in \mathcal F_{n-p}(0, \hat \tau). \]
Now let $z \in  \langle C(\tau) \rangle \wedge \mathcal{F}_{p-1}(0,\sigma_{\infty})$. We need to show that $ \iota_{v \wedge z} \Omega_N \in \langle C(\sigma) \rangle \wedge \mathcal{F}_{n-p-1}(0,\tau)$. Let $f_1, \ldots, f_q \in M$ be the vertices of $\sigma_{\infty}$. We can complete them to form an oriented basis $\{f_1, \ldots, f_{n+1} \}$ of $M$, so that 
\[ \Omega_N = f_1 \wedge \ldots \wedge f_{n+1}. \]
We denote by $\{f_1^*, \ldots, f_{n+1}^* \}$ the dual basis of $N$. Let $e_1, \ldots, e_s \in N$ be the vertices of $\tau$. We can assume
\[ v = e_1. \]
Since $\sigma_\infty\subset \min(C(\tau))^\vee$ and by reflexivity we have 
\begin{equation}  \label{fe}
   \langle f_i, e_j\rangle = 1 \quad \text{for all} \ \ 1\leq i \leq q \ \text{and} \ 1 \leq j \leq s. 
\end{equation}
By definition 
\[ \F_p(0 , \sigma_{\infty})=\sum_{1 \leq i <j \leq q} \bigwedge^p (f_i - f_j)^\perp, \]
Without loss of generality we can consider 
\[ \omega \in \bigwedge^p (f_1 - f_2)^\perp. \]
and let 
\[ z = e_2 \wedge \omega. \]
Then $z \in  \langle C(\tau) \rangle \wedge \mathcal{F}_{p-1}(0,\sigma_{\infty})$. Observe that 
\[   (f_1 - f_2)^\perp = \langle f_1^* + f_2^*, f_3^*, \ldots, f_{n+1}^* \rangle.\]
Therefore, by linearity, it is enough to consider two cases for $\omega$: either $\omega = f^*_I$, with $I=\{i_1, \ldots, i_{p-1} \} \subset \{3, \ldots, n+1 \}$ or $\omega = (f_1^* + f_2^*) \wedge f_I^*$, with $I=\{i_1, \ldots, i_{p-2} \} \subset \{3, \ldots, n+1 \}$. In these two cases we compute $\iota_{e_1 \wedge z} \Omega_N = \iota_{e_1 \wedge e_2 \wedge \omega} \Omega_N$. 
In the first case we have
\[
    \iota_{e_1 \wedge e_2 \wedge \omega} \Omega_N  =  \iota_{e_1}( \iota_{e_2} (\iota _{f_I^*}  \Omega_N )) =  \iota_{e_1}( \iota_{e_2} (f_1 \wedge f_2 \wedge f_{I'} )) 
 \]
 where $I'$ is the complement of $I$ in $\{3, \ldots, n+1\}$. Using the Leibnitz rule for $\iota$ and relations \eqref{fe} we compute that
   \[ \begin{split}
    \ & \iota_{e_1}( \iota_{e_2} (f_1 \wedge f_2 \wedge f_{I'} )) = \iota_{e_1}((f_2 - f_1) \wedge f_{I'} +f_1 \wedge f_2 \wedge (\iota_{e_2} f_{I'})) =  \\
    \ &=(f_1 - f_2) \wedge (\iota_{e_1} f_{I'}) + (f_2 - f_1)  \wedge (\iota_{e_2} f_{I'}) + f_1 \wedge f_2 \wedge (\iota_{e_1 \wedge e_2} f_{I'})) = \\
   \ &= (f_1 - f_2) \wedge (\iota_{e_1-e_2} f_{I'}) + f_1 \wedge f_2 \wedge (\iota_{e_1 \wedge e_2} f_{I'})
   \end{split} \]
  We now have that $f_1$ and $f_1 - f_2$ are both in $C(\sigma)$ and $\iota_{e_1-e_2} f_{I'} \in \bigwedge^{n-p-1} (e_1 - e_2)^{\perp}$ by \eqref{keriota}. In particular, since $e_1 - e_2$ is tangent to a $1$-dimensional edge of $\tau$, we have $\iota_{e_1-e_2} f_{I'}  \in \mathcal F_{n-p-1}(0, \tau)$. Now observe that 
  \[ \iota_{e_1 - e_2} (f_2 \wedge (\iota_{e_1 \wedge e_2} f_{I'})) =  \iota_{e_1 - e_2} (f_2 \wedge (\iota_{(e_1 - e_2)\wedge e_2} f_{I'})) = 0. \] 
 So that also $f_2 \wedge (\iota_{e_1 \wedge e_2} f_{I'}) \in \mathcal F_{n-p-1}(0, \tau) $. Hence this proves, for this case, that $\iota_{v \wedge z} \Omega_N \in \langle C(\sigma) \rangle \wedge \mathcal{F}_{n-p-1}(0,\tau)$. 
 
 In the second case, i.e. $\omega = (f_1^* + f_2^*) \wedge f_I^*$, a similar computation shows that 
 \[ \iota_{e_1 \wedge e_2 \wedge \omega} \Omega_N = (f_2 - f_1) \wedge (\iota_{e_1 \wedge e_2} f_{I'}). \] 
 Again, also in this case $\iota_{v \wedge z} \Omega_N \in \langle C(\sigma) \rangle \wedge \mathcal{F}_{n-p-1}(0,\tau)$. This concludes the proof that \eqref{contr_map} is well-defined.
 
 Now consider two different vertices of $\tau$, $v$ and $v'$. We prove that $\iota_{v \wedge z}\Omega_N - \iota_{v' \wedge z} \Omega_N$ is in $\langle C(\sigma) \rangle \wedge \mathcal{F}_{n-p-1}(0,\tau)$. This would show that the map \eqref{contr_map} is independent of $v$. It is enough to check this for $z \in  \bigwedge^p f_1^{\perp}$. In particular we can assume $z = f_{I}^*$ for some multi-index such that $1 \notin I$. Then
  \[ \begin{split}
      \iota_{v \wedge z}\Omega_N - \iota_{v' \wedge z} \Omega_N & = \iota_{v-v'} ( \iota_{f^*_I} \Omega_N) =  \iota_{v-v'} (f_1 \wedge f_{I'}) \\
    = - f_1 \wedge (\iota_{v-v'} (f_{I'}))
   \end{split} \]
 for some multi-index $I'$ such that $1 \notin I'$. Now we have $f_1 \in \langle C(\sigma) \rangle$ and $\iota_{v-v'} (f_{I'}) \in \bigwedge^{n-p-1} (v-v')^\perp \subseteq \mathcal F_{n-p-1}(0, \tau)$. Hence $\iota_{v \wedge z}\Omega_N - \iota_{v' \wedge z} \Omega_N \in \langle C(\sigma) \rangle \wedge \mathcal{F}_{n-p-1}(0,\tau)$. 

We now prove that \eqref{contr_map} is invertible. In fact consider the same map, but in the opposite direction $\mathcal M_{n-p}(\sigma_\infty,  \hat{\tau})  \rightarrow  \mathcal M_p(\tau,\sigma)$, using a vertex $w$ of $\sigma_{\infty}$ (instead of $v$) and the $(n+1)$-form $\Omega_M\in\bigwedge^{n+1} N$. This map (up to a sign) is the inverse of \eqref{contr_map}. Indeed we prove that 
\[ [\iota_{w \wedge (\iota_{v \wedge z} \Omega_{N})} \Omega_M] = \epsilon [z] \]

where $\epsilon= (-1)^{\tfrac{n(n+5)}{2}}$. 

It is enough to consider $z \in \bigwedge^{p} f_1^{\perp}$, in particular we can assume 
\[z = f_I^{*} \]
where $I$ is some multi-index with $1 \notin I$ and $|I| = p$. Define the following sign
\[ (-1)^{I} := \prod_{k \in I} (-1)^{k+1} \]
Then 
\[ \iota_{v \wedge z} \Omega_{N} = (-1)^{I} \iota_v(f_1 \wedge f_{I'}) =  (-1)^I (f_{I'} - f_1 \wedge (\iota_v f_{I'}))\]
where $I'$ is the complement of $I$ in $\{ 2, \ldots, n+1 \}$. We can assume that the vertex $w$ of $\sigma_{\infty}$ is $f_1$. Hence 
\[ w \wedge (\iota_{v \wedge z} \Omega_{N}) = (-1)^I f_1 \wedge f_{I'}. \]
We have 
\[ \Omega_M = f_1^* \wedge \ldots \wedge f_{n+1}^*. \]
Therefore 
\[ \iota_{w \wedge (\iota_{v \wedge z} \Omega_{N})} \Omega_M = (-1)^I \iota_{f_1 \wedge f_{I'}} \Omega_M = (-1)^I (-1)^{I'} f_{I}^* = (-1)^I (-1)^{I'} z. \]
Since $I \cup I' = \{2, \ldots, n+1 \}$, it is easy to show that $ (-1)^I (-1)^{I'} = (-1)^{\tfrac{n(n+5)}{2}}$. This concludes the proof that \eqref{contr_map} is an isomorphism. 

Let us prove the compatibility with cosheaf maps. We have an inclusion $(\tau_1, \sigma_1) \hookrightarrow (\tau, \sigma)$ if and only if $\tau \subset \tau_1$ and $\sigma \subset \sigma_1$. In particular in defining \eqref{contr_map},  the vertex $v$ of $\tau$ is also a vertex of $\tau_1$. Moreover the coheaf maps $\mathcal F_p(0, \sigma) \rightarrow \mathcal F_p(0, \sigma_1)$ and $\mathcal F_{n-p}(0, \hat \tau) \rightarrow \mathcal F_{n-p}(0, \hat \tau_1)$ are just inclusions of sub-modules. So that we have a commuting diagram 
\[ \begin{tikzcd}
  \mathcal F_p(0,\sigma) \arrow[r] \arrow[d] & \mathcal F_p(0,\sigma_1)  \arrow[d] \\
  \mathcal F_{n-p}(0,\hat{\tau} )  \arrow[r] & \mathcal F_{n-p}(0,\hat{\tau}_{1}).
 \end{tikzcd} \]
where the vertical arrows are given by the map $z \mapsto \iota_{v \wedge z} \Omega_N$. When we pass to the quotients we get the commuting diagram in the statement of the theorem.
 \end{proof}
 
In particular this proves that the pairs $(\mathcal{J}^S(T^\circ,T), \mathcal M_p \otimes A)$ and $( \mathcal{J}^S(T, T^\circ) , \mathcal M_{n-p} \otimes A)$ are canonically isomorphic and therefore they have canonically isomorphic homologies.  This concludes the proof.

\section{Patchworking and mirror symmetry}
We now consider real Calabi-Yau hypersurfaces coming from combinatorial patchworking. We will use Theorem \ref{thm:main1} in order to give a necessary and sufficient condition for such real Calabi-Yau to be connected.
\subsection{Combinatorial patchworking}

The patchworking method was invented by Viro in the 1980's and it has proved to be a very powerful tool to construct real algebraic hypersurfaces (and complete intersections) in toric varieties with prescribed topology of the real part \cite{viro_patch}, \cite{viro}. Let us recall here the (primitive) combinatorial version. Let $\Delta$ be a lattice polytope in $M_\R$ and $T$ a primitive lattice triangulation of $\Delta$. Primitive (or unimodular) means here that all facets of $T$ have lattice volume $1$. Let $\varepsilon:\Delta\cap M\rightarrow \mathbb{F}_2$ be a sign distribution on the lattice points of $\Delta$ (since the triangulation is primitive, note that the lattice points of $\Delta$ coincide with the vertices of the triangulation). When we say signs in the context of Viro's patchworking, we will identify $+$ with $0\in\FF_2$ and $-$ with $1$.

Recall that the real part of the toric variety associated to $\Sigma_\Delta$ is constructed by considering a certain quotient of copies $\Delta(\xi)$ of $\Delta$ for any group homomophism $\xi:M \rightarrow \mathbb{F}_2$, see Equation (\ref{eq:real part}).
Denote by $T(\xi)$ the triangulation of $\Delta(\xi)$ induced by $T$.  Given a lattice point $v\in \Delta$, the sign of its copy $v(\xi)$ in $\Delta(\xi)$ is defined by 
$\varepsilon(v(\xi)) = \xi(v)+\varepsilon(v) \in \mathbb{F}_2$. For any $\xi$ and any $(n+1)$-simplex $\sigma$ of $\Delta(\xi)$, separate the vertices having different signs by taking the union of convex hulls of barycenters of simplices $\sigma_1\subset\cdots\subset \sigma$ such that $\sigma_1$ is an edge with different signs. This gives a simplicial complex defining a PL-hypersurface in the quotient $\R\Sigma_\Delta$, denoted by $X_{T,\varepsilon}$. See Figures \ref{pw_divisors} and \ref{pw_divisors_2} for examples (ignore the divisors for the moment). In the standard description of combinatorial patchworking \cite{itenberg1995}, the hypersurface $X_{T,\varepsilon}$ is defined by considering convex hulls of midpoints of edges with different signs, but the two descriptions give the same result up to isotopy. In the case of a convex triangulation induced by a convex function $\mu: \Delta \rightarrow \R$, Viro's combinatorial patchworking theorem states that the PL-hypersurface $X_{T,\varepsilon}$ is ambient isotopic (in $\R\Sigma_\Delta$) to the closure (in $\R\Sigma_\Delta$) of the hypersurface in $(\R^*)^{n+1}$ defined by the polynomial 
\begin{equation}\label{Viro polynomial}
\sum_{v\in \Delta\cap M} (-1)^{\varepsilon(v)} t^{\mu(v)} x^v,
\end{equation}
for $t$ small enough and positive. 
In this text, we consider also different compactifications. In fact, given unimodular central triangulations $T^{\circ}$ and $T$ of a pair of dual reflexive polytopes $\Delta^{\circ}$ and $\Delta$, we consider the toric variety coming from the fan $\Sigma_{T^\circ}$, where $T^\circ$ is a unimodular triangulation of $\Delta^\circ$. We denote by $X_{T^\circ,T,\varepsilon}$  the compactification of $X_{T,\varepsilon}\cap (\R^*)^{n+1}$ in $\R\Sigma_{T^\circ}$. Again, under convexity hypothesis, the PL-hypersurface $X_{T^\circ,T,\varepsilon}$ is ambient isotopic in $\R\Sigma_{T^\circ}$ to the compactification in $\R\Sigma_{T^\circ}$ of the hypersurface given by \eqref{Viro polynomial}.

\subsection{Patchworking and divisors in the mirror}

Consider the Picard group of a toric variety $\C\Sigma$ with $\FF_{2}$ coefficients, denoted by $\pic_{\FF_2}(\C\Sigma)$.  
For a smooth toric variety $\C\Sigma$ with $\Sigma$ a complete fan in $M$, this group is computed as follows. Let $\dv_{\text{toric}} \C\Sigma$ be the free module generated by toric divisors, i.e. 
\[ \dv_{\text{toric}} \C\Sigma = \sum_{\rho \in \Sigma(1)} \Z_{2} D_{\rho} \] 
where the sum runs over all rays of the fan (i.e. $1$-dimensional cones) and $D_{\rho}$ is the toric divisor associated to the ray. Then we have the following short exact sequence
\begin{equation} \label{2div}
     0 \rightarrow N \otimes \FF_2 \rightarrow \dv_{\text{toric}} \C\Sigma \rightarrow \pic_{\FF_2}(\C\Sigma) \rightarrow 0.
\end{equation}

The second arrow is the map $n \mapsto \sum_{\rho} \langle n,v_{\rho} \rangle D_{\rho}$, where $v_{\rho} \in M \otimes \FF_{2}$ is the $\mod 2$ reduction of the primitive generator of the ray $\rho$, see \cite{fulton:toric} and \cite{cox:little:schenck}.

 We now go back to the dual reflexive polytopes $\Delta$ and $\Delta^{\circ}$ with unimodular central triangulations $T$ and $T^{\circ}$. Suppose we do patchworking with $T$ and some sign distribution $\varepsilon$, thus obtaining a PL-hypersurface $X_{T^{\circ}, T,\varepsilon}$ in $\R\Sigma_{T^\circ}$, which , in the convex case, is isotopic to a real Calabi-Yau variety.   

The sign distribution $\varepsilon$ also determines a toric divisor $D^{\varepsilon} \in \dv_{\text{toric}} \C\Sigma_T $ as follows. Let $\mathbf{o} \in T$ be the unique interior lattice point, then the other lattice points $v \in \partial T$ are the primitive generators of the rays of the fan $\Sigma_T$. Therefore we can define
\[ D^{\varepsilon} = \sum_{\varepsilon(v) = \varepsilon (\mathbf o)} D_{v}. \]
It is clear that, up to inverting the sign of every lattice point in $T$, there is a one to one correspondence between toric divisors with $\FF_2$ coefficients on $\C\Sigma_T$ and a choice of patchworking signs on $T$. Therefore we denote by $X_{T^{\circ}, T, D}$  the PL-hypersurface in $\R\Sigma_{T^{\circ}}$ constructed by the patchworking signs determined by $D \in \dv_{\text{toric}} \C\Sigma_T$. We have then the following:

\begin{prop} \label{div:pw} If two divisors $D$ and $D'$ in $\dv_{\text{toric}} \C\Sigma_T$ define the same class in $\pic_{\FF_2}(\C\Sigma_T)$ then 
$X_{T^{\circ}, T, D}$ and $X_{T^{\circ}, T, D'}$ coincide, up to an automorphism of $\R\Sigma_{T^{\circ}}$.
\end{prop}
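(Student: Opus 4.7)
The idea is to use the short exact sequence (\ref{2div}): the hypothesis that $D$ and $D'$ coincide in $\pic_{\FF_2}(\C\Sigma_T)$ produces $\eta \in N\otimes\FF_2 \cong \Hom(M,\FF_2)$ with $D+D' = \sum_{v\in\partial T}\eta(v) D_v$ in $\dv_{\text{toric}}\C\Sigma_T$. I will show that $\eta$, viewed as the $2$-torsion element of the real torus acting on $\R\Sigma_{T^\circ}$, provides the required automorphism.

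The first step is to translate this divisor identity into a statement about sign distributions. Choose $\varepsilon,\varepsilon'$ with $D^\varepsilon=D$ and $D^{\varepsilon'}=D'$. Since $\mathbf{o}=0$ and $\eta$ is a homomorphism, $\eta(\mathbf{o})=0$, and unfolding $D^\varepsilon=\sum_{v:\,\varepsilon(v)=\varepsilon(\mathbf{o})}D_v$ directly yields $D^{\varepsilon+\eta}=D^\varepsilon+\sum_{v\in\partial T}\eta(v)D_v$. Combining with the hypothesis forces $\varepsilon'=\varepsilon+\eta$ up to the global sign flip, which produces the same PL-hypersurface. Next, since $\Sigma_{T^\circ}\subset N_\R$, the octants of $\R\Sigma_{T^\circ}$ in (\ref{eq:real part}) are indexed by $\xi\in\Hom(M,\FF_2)$, the same group in which $\eta$ lives. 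Translation $\xi\mapsto\xi+\eta$ preserves the equivalence relation of (\ref{eq:real part}) (if $\xi|_{\sigma^\perp}=\xi'|_{\sigma^\perp}$ then $(\xi+\eta)|_{\sigma^\perp}=(\xi'+\eta)|_{\sigma^\perp}$), and hence descends to an automorphism $\phi_\eta$ of $\R\Sigma_{T^\circ}$, namely the $2$-torsion element of the real torus associated with $\eta$.

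It then remains to verify that $\phi_\eta(X_{T^\circ,T,D})=X_{T^\circ,T,D'}$. In the $\xi$-octant, $X_{T^\circ,T,\varepsilon+\eta}$ is built from vertex signs $(\varepsilon+\eta)(v)+\xi(v) = \varepsilon(v)+(\xi+\eta)(v)$, which are precisely the signs used to build $X_{T^\circ,T,\varepsilon}$ in the $(\xi+\eta)$-octant. Since $\phi_\eta$ identifies these two octants, the two patchworking patterns are identified octant by octant. The main subtlety, rather than a real obstacle, is that $N\otimes\FF_2$ plays two distinct roles here — the kernel of the Picard map on one side and the component group of the real torus on the other — and one must check these roles are intertwined by the pairing $\langle\,\cdot\,,v_\rho\rangle$ defining (\ref{2div}); once this compatibility is unwound, the rest is a direct verification of definitions.
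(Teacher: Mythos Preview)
Your proposal is correct and follows essentially the same approach as the paper: both arguments reduce the claim to showing that linearly equivalent $\FF_2$-divisors correspond to sign distributions differing by some $\xi\in\Hom(M,\FF_2)$, using the exact sequence \eqref{2div} and the fact that $\mathbf{o}=0$. The paper stops there, declaring that this ``obviously implies the statement'', whereas you spell out explicitly that translation by $\eta$ on the index set of octants in \eqref{eq:real part} descends to the required automorphism $\phi_\eta$ of $\R\Sigma_{T^\circ}$ carrying one patchworking to the other---a worthwhile elaboration, but not a different route.
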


\begin{proof} We will prove that $D$ and $D'$ are equivalent $\FF_2$ divisors if and only if they correspond to sign distributions $\varepsilon$ and $\varepsilon'$ satisfying $\varepsilon'=\varepsilon+\xi$ for some $\xi\in \Hom(M,\mathbb{F}_2)$. This obviously implies the statement. Using \eqref{2div}, $D$ and $D'$ are equivalent divisors if and only if 
\[ D' = D + \sum_{v \in \partial T} \langle n, v - \mathbf o \rangle D_v. \]
for some $n \in N \otimes \FF_2$. Note that $N\otimes \FF_2=\Hom(M,\FF_2)$.

Let 
\[ D = \sum_{v \in \partial T} a_v D_v \]
with $a_v \in \FF_2$. 

Fix the sign of $\mathbf o$ to be $\varepsilon (\mathbf o) = 1\in\FF_2$. Then the signs corresponding to $D$ are $\varepsilon (v) = a_v$. 
Let $\xi\in \Hom(M,\FF_2)$. We see that for $v\in\partial T$, one has
$$
\varepsilon(v)+\xi (v) = \varepsilon(\mathbf o) + \xi (\mathbf o)
$$
iff 
$$
a_v+\xi (v-\mathbf o)=1.
$$
This means that if $\xi = n$, the divisor corresponding to $\varepsilon+\xi$ is precisely 
$$
D + \sum_{v \in \partial T} \langle n, v - \mathbf o \rangle D_v,
$$
which is $D'$. This concludes the proof of the lemma.

\end{proof}

\begin{ex} The real cubic curves in $\PP^2$ obtained from patchworking with the sign distribution corresponding to $D= D_7 + D_8$ and to $D=D_8$  are depicted in Figure \ref{pw_divisors}. Notice that the first one is connected, while the second one is not.

\begin{figure}[!ht] 
\begin{center}
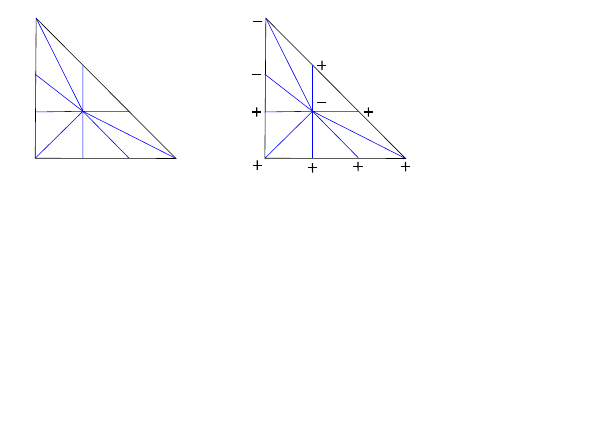
\caption{The signs corresponding to $D = D_7 + D_8$ and to $D=D_8$ and the corresponding real cubics.}
\label{pw_divisors}
\end{center}
\end{figure}
\end{ex}

\subsection{Real structures and the patchworking spectral sequence} \label{rps} In this section, we describe in our context the spectral sequence from \cite{ren_shaw:bound} allowing to understand better the Betti numbers of a combinatorial patchworking. 
\begin{defi} A \emph{real phase structure} on $(\Delta, T)$  is the data $\mathcal E$ given by a choice, for every edge $\sigma \in T$, of an element  $\mathcal E_{\sigma} \in \frac{N \otimes \FF_2}{\sigma^{\perp}}$, such that for any two dimensional simplex $\tau \in T$ the number of edges $\sigma$ of $\tau$ such that $\mathcal E_{\sigma} = 0$ is either $0$ or $2$.
\end{defi}

This definition appeared first in \cite{ren_shaw:bound} for tropical hypersurfaces. Notice that $\mathcal E_{\sigma}$ can also be interpreted as a choice of an affine hyperplane of $N \otimes \FF_2$ parallel to $\sigma^{\perp}$. The condition on the two dimensional simplex $\tau$ is that the three affine hyperplanes, corresponding to the edges of $\tau$, pairwise intersect in three distinct codimension $2$ subspaces parallel to $\tau^{\perp}$. A real phase structure is equivalent to a choice of patchworking signs on the triangulation $T$, up to inversion of all signs. Indeed this follows by establishing the rule that the vertices of an edge $\sigma$ have the same sign if and only if  $\mathcal E_{\sigma}$ is the generator of $\frac{N \otimes \FF_2}{\sigma^{\perp}}$.  Then a sign distribution determines a real phase structure, viceversa, given a real phase structure $\mathcal E$, the choice of the sign of one vertex determines the signs of all other vertices. 
 
\begin{defi}
Let $\mathcal E$ be a real phase structure on $(\Delta, T)$. For $(\tau,\sigma)\in\mathcal{P}^1(T^\circ,T)$, define
$$
\mathcal{E}(\tau,\sigma)=\left\lbrace s\in N\otimes \FF_2 / \langle C(\tau) \rangle \:\: \vert \:\: \pi_{\gamma}(s)=\mathcal{E}_\gamma \text{ for some edge } \gamma\subset\sigma \right\rbrace,
$$
where $\pi_{\gamma}:  N\otimes \FF_2 /  \langle C(\tau) \rangle \rightarrow  N\otimes \FF_2 / \gamma^\perp$. %Note that by definition, if $\dim \sigma =0$, then $\mathcal{E}(\tau,\sigma)= N\otimes \FF_2 / C(\tau) $.
\end{defi}
Define also 
$$
\mathcal{P}_\R(T^\circ,T)=	\left\lbrace (\tau, \sigma, s) \:\:\vert \:\: (\tau,\sigma)\in \mathcal{P}(T^\circ,T), s\in N\otimes \FF_2 / C(\tau)  \right\rbrace.
$$

The set $\mathcal{P}_\R(T^\circ,T)$ is also a poset, where the order is given by $(\tau, \sigma, s)\leq (\tau', \sigma', s')$ if
$(\tau, \sigma)\leq_\mathcal{P} (\tau', \sigma')$ and $\pi_{\tau',\tau}(s')=s$, where $\pi_{\tau',\tau}:N\otimes \FF_2 /  \langle C(\tau') \rangle \rightarrow N\otimes \FF_2 / \langle C(\tau) \rangle$ is the $\FF_2$ reduction of the projection maps (\ref{cosheafmaps}). Consider the geometric realization of the poset $\mathcal{P}_\R(T^\circ,T)$ and its subdivision given by 
$$
\vert \mathcal{P}_\R(T^\circ,T)\vert = \bigcup_{(\tau,\sigma, s)\in \mathcal{P}_\R(T^\circ,T)} F(\tau,\sigma,s).
$$
where
$$
F(\tau,\sigma,s)= \bigcup_{(\tau,\sigma,s)_\bullet \in (\tau,\sigma,s)_{\leq}}\vert (\tau,\sigma,s)_\bullet \vert.
$$
\begin{prop}
The poset $\mathcal{P}_\R(T^\circ,T)$ is the face poset of a regular CW-structure on $\R\Sigma_{T^\circ}$ with cells $F(\tau,\sigma,s)$.
\end{prop}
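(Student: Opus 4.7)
The plan is to lift the regular CW-structure on $\T\Sigma_{T^\circ}\simeq \R_+\Sigma_{T^\circ}$ from Proposition \ref{prop:tropicaltoricposet} through the quotient description \eqref{eq:real part} of the real toric variety. Each of the $2^{n+1}$ sheets $\R_+\Sigma_{T^\circ}(\xi)$ carries the cell structure with cells $F_\xi(\tau,\sigma)$ indexed by $(\tau,\sigma)\in\mathcal{P}(T^\circ,T)$, so the remaining work is to track how these cells behave under the sheet identifications.

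First, I would show how the parameter $s\in N\otimes\FF_2/\langle C(\tau)\rangle$ arises naturally. Under the identification $\Hom(M,\FF_2)=N\otimes\FF_2$, two sheets $\xi,\xi'$ get identified over a point of $F(\tau,\sigma)\subset\R_+\Sigma_{T^\circ}$ (which lies in the orbit $\mathcal{O}^{\R_+}_{C(\tau)}$) precisely when $\xi-\xi'\in\langle C(\tau)\rangle\otimes\FF_2$, so the equivalence classes are labelled by $s\in N\otimes\FF_2/\langle C(\tau)\rangle$. This yields a partition of $\R\Sigma_{T^\circ}$ into subsets $F(\tau,\sigma,s)$ in bijection with $\mathcal{P}_\R(T^\circ,T)$. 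Each such subset is homeomorphic to the open cell $F(\tau,\sigma)$ because the identifications are trivial in the interior of the orbit; in particular it is an open ball of dimension $\dim F(\tau,\sigma)=\dim(\tau,\sigma)$.

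Next I would verify that the face relations match the poset order. Given $(\tau',\sigma',s')\leq(\tau,\sigma,s)$, meaning $\tau\subset\tau'$, $\sigma\subset\sigma'$ and $\pi_{\tau,\tau'}(s)=s'$, lift $F(\tau,\sigma,s)$ to the sheet $\R_+\Sigma_{T^\circ}(\xi)$ for some representative $\xi$ of $s$. By the regularity of the CW-structure on the sheet, $F_\xi(\tau',\sigma')\subset\overline{F_\xi(\tau,\sigma)}$. When projecting to $\R\Sigma_{T^\circ}$, the sheet label $\xi$ is now read modulo the larger subspace $\langle C(\tau')\rangle\otimes\FF_2$, so the resulting cell is precisely $F(\tau',\sigma',\pi_{\tau,\tau'}(s))=F(\tau',\sigma',s')$. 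The converse inclusion follows because every boundary face of $F(\tau,\sigma,s)$ must lie above a boundary face of $F(\tau,\sigma)$ in $\R_+\Sigma_{T^\circ}$.

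The final step is to check regularity, i.e.\ that the closure $\overline{F(\tau,\sigma,s)}$ is a closed ball with attaching map a homeomorphism. This reduces to showing that the quotient map from $\overline{F_\xi(\tau,\sigma)}$ to $\R\Sigma_{T^\circ}$ is injective, so that the regularity on each sheet transfers intact. The main obstacle I anticipate is precisely this injectivity: one must rule out the possibility that two distinct points of $\overline{F_\xi(\tau,\sigma)}$, lying in (possibly different) boundary faces, get identified under the equivalence of \eqref{eq:real part}. This requires careful bookkeeping of how the sheet restrictions $\xi|_{C(\rho)^\perp\cap M}$ coincide as $\rho$ varies along the stratification of $\overline{F(\tau,\sigma)}$, but the nested condition $\pi_{\tau,\tau'}(s)=s'$ built into the poset $\mathcal{P}_\R(T^\circ,T)$ is designed exactly to encode these identifications consistently, so the injectivity on each closed cell should follow formally once the compatibility of the projection maps $\pi_{\tau',\tau}$ with the orbit closure order is established.
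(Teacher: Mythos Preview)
Your approach is correct and essentially complete, though organised differently from the paper. The paper argues by analogy with the tropical case (Proposition~\ref{prop:tropicaltoricposet}): it first observes that $\R\Sigma_{T^\circ}$ already carries a regular CW-structure with face poset
\[
\Sigma_{T^\circ,\R}=\{(\rho,s)\mid \rho\in\Sigma_{T^\circ},\ s\in N\otimes\FF_2/\langle\rho\rangle\},
\]
and then notes that $\mathcal{P}_\R(T^\circ,T)$ is a subdivision of this poset, exactly as $\mathcal{P}(T^\circ,T)$ subdivides $\Sigma_{T^\circ}$ in the tropical setting. Your route instead lifts the already-subdivided tropical structure $\mathcal{P}(T^\circ,T)$ through the sheet description \eqref{eq:real part}. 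Both reach the same endpoint; the paper's version is shorter because it reuses the subdivision argument wholesale, whereas yours verifies the cell, face, and regularity conditions directly.

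One remark on your final step: the injectivity you flag as the ``main obstacle'' is in fact immediate. The equivalence relation in \eqref{eq:real part} identifies $(p,\xi)\sim(p',\xi')$ only when $p=p'$, so the quotient map restricted to any single sheet $\R_+\Sigma_{T^\circ}(\xi)$ is already injective. Hence $\overline{F_\xi(\tau,\sigma)}$, which is a closed ball by the regularity on the tropical side, maps homeomorphically onto its image in $\R\Sigma_{T^\circ}$, and no further bookkeeping is required.
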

\begin{proof}
This is similar to Proposition \ref{prop:tropicaltoricposet}. Here the face poset of the real toric variety associated to a fan $\Sigma$ is  the poset
$$
\Sigma_\R:=\left\lbrace (\rho,s) \: \vert \: \rho\in\Sigma \text{ and } s\in N\otimes\FF_2 / \rho \right\rbrace.
$$
Again the poset $\mathcal{P}_\R(T^{\circ},T)$ defines a subdivision of $\Sigma_{T^\circ,\R}$ so their geometric realizations are homeomorphic.
\end{proof}
Given a real phase structure $\mathcal{E}$ on $(\Delta,T)$, consider the following subposet of $\mathcal{P}_\R(T^{\circ}, T)$:
$$
\mathcal{P}^1_\mathcal{E}(T^\circ,T)=	\left\lbrace (\tau, \sigma, s) \in \mathcal{P}_\R(T^\circ,T) \:\: \vert \:\: \dim \sigma \geq 1, \ \ s\in\mathcal{E}(\tau,\sigma) \right\rbrace.
$$

\begin{prop}
Let $\mathcal{E}$ be a real structure on $(\Delta,T)$ and let $\varepsilon:\Delta\cap M\rightarrow \FF_2$ be a corresponding sign distribution. Then $\mathcal{P}^1_\mathcal{E}(T^\circ,T)$ is a regular CW-structure on $X_{T^\circ,T,\varepsilon}$.
\end{prop}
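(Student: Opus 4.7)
The plan is to realize $X_{T^\circ, T, \varepsilon}$ as the subcomplex of $\R\Sigma_{T^\circ}$ (with the regular CW-structure provided by the preceding proposition) whose face poset is exactly $\mathcal{P}^1_\mathcal{E}(T^\circ, T)$. The first step is to check that $\mathcal{P}^1_\mathcal{E}$ is closed under passing to lower elements in $\mathcal{P}_\R(T^\circ, T)$. Suppose $(\tau', \sigma', s') \leq (\tau, \sigma, s)$ with $(\tau, \sigma, s) \in \mathcal{P}^1_\mathcal{E}$. Unraveling the order, $\tau \subseteq \tau'$, $\sigma \subseteq \sigma'$ and $\pi_{\tau, \tau'}(s) = s'$; in particular $\dim \sigma' \geq \dim \sigma \geq 1$. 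Any edge $\gamma \subseteq \sigma$ witnessing $s \in \mathcal{E}(\tau, \sigma)$ is also an edge of $\sigma'$, and from $\gamma \subseteq \sigma' \subseteq \min(C(\tau'))^\vee$ one has $\langle C(\tau') \rangle \subseteq \gamma^\perp$, so $\pi_\gamma$ factors through $\pi_{\tau, \tau'}$ and gives $\pi_\gamma(s') = \pi_\gamma(s) = \mathcal{E}_\gamma$. Hence $(\tau', \sigma', s') \in \mathcal{P}^1_\mathcal{E}$, and $Y := \bigcup_{(\tau, \sigma, s) \in \mathcal{P}^1_\mathcal{E}} F(\tau, \sigma, s)$ is a closed subcomplex of $\R\Sigma_{T^\circ}$ with face poset $\mathcal{P}^1_\mathcal{E}$.

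Next, I would identify $Y$ with $X_{T^\circ, T, \varepsilon}$. Since $X_{T^\circ, T, \varepsilon}$ is by definition the closure in $\R\Sigma_{T^\circ}$ of $X_{T,\varepsilon} \cap (\R^\ast)^{n+1}$ and $Y$ is closed, it suffices to show $Y \cap (\R^\ast)^{n+1} = X_{T,\varepsilon} \cap (\R^\ast)^{n+1}$ together with $Y = \overline{Y \cap (\R^\ast)^{n+1}}$. The open dense torus corresponds exactly to cells $F(0, \sigma, \xi)$ with $\xi \in N \otimes \FF_2$, and via the barycentric description $F(0, \sigma, \xi)$ is the union of the simplices $\operatorname{conv}(\hat\sigma, \hat\sigma_1, \ldots, \hat\sigma_k)$ over chains $\sigma \subseteq \sigma_1 \subset \cdots$ inside the copy of $\Delta$ indexed by $\xi$. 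Such a cell lies in $X_{T, \varepsilon}$ precisely when $\sigma$ contains some edge $\gamma$ whose two vertices carry opposite signs in copy $\xi$. Using the dictionary between sign distributions and real phase structures recalled before the definition of $\mathcal{E}(\tau, \sigma)$, this condition translates to $\pi_\gamma(\xi) = \mathcal{E}_\gamma$, i.e. $\xi \in \mathcal{E}(0, \sigma)$, yielding $Y \cap (\R^\ast)^{n+1} = X_{T,\varepsilon} \cap (\R^\ast)^{n+1}$. The density $Y = \overline{Y \cap (\R^\ast)^{n+1}}$ then follows from the first step: given $(\tau, \sigma, s) \in \mathcal{P}^1_\mathcal{E}$ with $\tau \neq 0$, pick a witnessing edge $\gamma \subseteq \sigma$ and any lift $\tilde s \in N \otimes \FF_2$ with $\pi_{0, \tau}(\tilde s) = s$; then $\pi_\gamma(\tilde s) = \pi_\gamma(s) = \mathcal{E}_\gamma$ shows $(0, \gamma, \tilde s) \in \mathcal{P}^1_\mathcal{E}$, and $F(\tau, \sigma, s) \subseteq \overline{F(0, \gamma, \tilde s)}$.

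The main obstacle is the interior identification inside $(\R^\ast)^{n+1}$. The combinatorial patchworking is classically described as a union of barycentric simplices $\operatorname{conv}(\hat\gamma, \hat\sigma_1, \ldots, \hat\sigma_k)$ indexed by chains whose minimum is a diff-sign edge $\gamma$, whereas the dual cell $F(0, \sigma, \xi)$ is indexed by chains whose minimum is $\sigma$. The matching requires reorganizing the patchworking by the second element of the chain: grouping together all contributions of chains through a given $\sigma$ shows that $F(0, \sigma, \xi)$ is entirely contained in the patchworking as soon as at least one edge of $\sigma$ changes sign in copy $\xi$, and is otherwise disjoint from it. Once this combinatorial matching is established, everything else, including the boundary behavior of the compactification in $\R\Sigma_{T^\circ}$, follows formally from the down-closure of $\mathcal{P}^1_\mathcal{E}$ and the regularity of the ambient CW-structure.
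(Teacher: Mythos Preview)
Your argument is correct and reaches the same conclusion as the paper, but the route is somewhat different. The paper first works in $\R\Sigma_\Delta$: it takes the barycentric chain description of $X_{T,\varepsilon}$, surjects the chain poset onto $\Phi_{\mathcal E} = \{(\rho,\sigma,s)\mid (\rho,\sigma)\in\Phi,\ s\in\mathcal E(\rho,\sigma)\}$ via $((\sigma_1\subset\cdots\subset\sigma_k),\xi)\mapsto (F_k^\vee,\sigma_1,\xi)$, concludes that $\Phi_{\mathcal E}$ is a regular CW-structure on $X_{T,\varepsilon}$, and then obtains $\mathcal P^1_{\mathcal E}(T^\circ,T)$ by pulling back through the blowdown $\operatorname{Bl}:\R\Sigma_{T^\circ}\to\R\Sigma_\Delta$, exactly as in the proof of the previous proposition on $\mathcal P_\R(T^\circ,T)$.

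You instead work entirely inside $\R\Sigma_{T^\circ}$: you show $\mathcal P^1_{\mathcal E}$ is down-closed in $\mathcal P_\R$ (so it cuts out a closed subcomplex $Y$), match $Y$ with the patchworking on the open torus by the same barycentric dictionary, and then use a density argument to pass to the compactification. Both approaches rely on the same combinatorial matching on the torus; the difference is that the paper handles the boundary by transporting the structure from $\R\Sigma_\Delta$ via $\operatorname{Bl}$, whereas you handle it by the down-closure plus the observation that every boundary cell lies in the closure of an interior one. Your version is a bit more self-contained and makes the role of down-closure explicit; the paper's version is more uniform with the preceding propositions and avoids the separate closure check.
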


\begin{proof}
Recall that the hypersurface $X_{T,\varepsilon}$ is obtained by considering for any $\xi\in N\otimes\FF_2$ and any $(n+1)$-simplex $\sigma$ of $\Delta (\xi)$ the union of convex hulls of barycenters of simplices $\sigma_1\subset\cdots\subset \sigma$ such that $\sigma_1$ is an edge with different signs. So after taking the quotient in $\R\Sigma_\Delta$, the hypersurface $X_{T,\varepsilon}$ is realized by the poset
$$
\left\lbrace ((\sigma_1\subset\cdots\subset\sigma_{k}),\xi) \:\: \vert \:\: \sigma_i\in T, \xi\in N\otimes\FF_2/(F_k)^\perp, \pi_{\sigma_1}(\xi)=\mathcal{E}_{\sigma_1} \right\rbrace.
$$
Here $F_k$ denotes the minimal face of $\Delta$ containing $\sigma_k$. Recall also that the poset $\mathcal{P}(T^\circ,T)$ was defined as a small modification of the poset
$$
\Phi:=\left\lbrace (\rho,\sigma)\in\Sigma_\Delta\times T \mid \sigma\subset \rho^\vee \right\rbrace.
$$
One can consider
$$
\Phi_\mathcal{E}:=\left\lbrace (\rho,\sigma,s) \: \vert \: (\rho,\sigma) \in \Phi \text{ and } s\in \mathcal{E}(\rho,\sigma) \right\rbrace.
$$
The surjective map of posets 
$$
 ((\sigma_1\subset\cdots\subset\sigma_{k}),\xi)\rightarrow (F_k^\vee,\sigma_1,\xi).
$$
shows that $\Phi_\mathcal{E}$ induces a regular CW-structure on $X_{T,\varepsilon}$. The result is obtained by taking the preimages by the map $\operatorname{Bl}: \T\Sigma_{T^\circ} \rightarrow \T\Sigma_{\Delta}$ as in the proof of Proposition \ref{prop:tropicaltoricposet}.
\end{proof}

There is an obvious surjective map from $\mathcal{P}_\R(T^\circ,T)$ to $\mathcal{P}(T^\circ,T)$ by forgetting the last coordinate, and results from \cite{ren_shaw:bound} extend to this case. We recall them for reader convenience. The proofs are similar as in \cite{ren_shaw:bound}.
\begin{defi}
The sign cosheaf on $\mathcal{P}(T^\circ,T)$ is given by 
$$
\mathcal{S}(\tau,\sigma):=\FF_2^{\mathcal{E}(\tau,\sigma)}.
$$
For $\varepsilon\in \mathcal{E}(\tau,\sigma)$, we will denote by $w_\varepsilon$ the corresponding generator of $\mathcal{S}(\tau,\sigma)$.
\end{defi}

\begin{prop}\label{prop:realposethom}
For every integer $q$, the groups $H_q(X_{T^\circ,T}; \mathcal{S})$ and $H_q(X_{T^\circ,T,\varepsilon};\FF_2)$ are isomorphic.

\end{prop}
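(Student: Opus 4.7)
The plan is to construct an isomorphism of chain complexes $C_\bullet(\mathcal{P}^1(T^\circ,T);\mathcal{S}) \cong C_\bullet(\mathcal{P}^1_\mathcal{E}(T^\circ,T);\FF_2)$ and then invoke the previous proposition, which identifies the latter with the cellular chain complex of $X_{T^\circ,T,\varepsilon}$ with $\FF_2$-coefficients. This is the direct analogue of the argument carried out in \cite{ren_shaw:bound}.

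First I would examine the forgetful map of posets $\pi:\mathcal{P}^1_\mathcal{E}(T^\circ,T)\to\mathcal{P}^1(T^\circ,T)$, $(\tau,\sigma,s)\mapsto(\tau,\sigma)$. It preserves the dimension grading, and its fiber over $(\tau,\sigma)$ is by definition $\mathcal{E}(\tau,\sigma)$, whose indicator vectors $\{w_s\}_{s\in\mathcal{E}(\tau,\sigma)}$ form a distinguished $\FF_2$-basis of $\mathcal{S}(\tau,\sigma)=\FF_2^{\mathcal{E}(\tau,\sigma)}$. Summing these bases over $(\tau,\sigma)$ of fixed dimension $q$ yields a natural isomorphism of graded $\FF_2$-vector spaces
\[ \Psi_q:C_q(\mathcal{P}^1_\mathcal{E};\FF_2)\ris C_q(\mathcal{P}^1;\mathcal{S}), \qquad (\tau,\sigma,s)\longmapsto (\tau,\sigma)\otimes w_s. \]

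The next step is to check that $\Psi$ intertwines the differentials. A cover relation $(\tau',\sigma',s')\lessdot(\tau,\sigma,s)$ in $\mathcal{P}^1_\mathcal{E}$ is the same as a cover $(\tau',\sigma')\lessdot(\tau,\sigma)$ in $\mathcal{P}^1$ (so $\tau\subset\tau'$ and $\sigma\subset\sigma'$) together with the relation $s'=\pi_{\tau,\tau'}(s)$; in particular $s'$ is uniquely determined by $s$ and lies automatically in $\mathcal{E}(\tau',\sigma')$, thanks to the commuting triangle $\pi^{\tau'}_\gamma\circ\pi_{\tau,\tau'}=\pi^\tau_\gamma$ for any edge $\gamma\subset\sigma\subset\sigma'$, which transports the condition $\pi_\gamma(s)=\mathcal{E}_\gamma$ across the projection. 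Since the balanced signature collapses to the constant $1$ over $\FF_2$, the differential on $C_\bullet(\mathcal{P}^1_\mathcal{E};\FF_2)$ reads
\[ \partial(\tau,\sigma,s) = \sum_{(\tau',\sigma')\lessdot(\tau,\sigma)} (\tau',\sigma',\pi_{\tau,\tau'}(s)), \]
and this matches the cosheaf differential on $C_\bullet(\mathcal{P}^1;\mathcal{S})$ as soon as the cosheaf structure maps of $\mathcal{S}$ are taken to be $w_s\mapsto w_{\pi_{\tau,\tau'}(s)}$, which is the unique choice compatible with the order on $\mathcal{P}_\R$. Passing $\Psi$ to homology and combining with the previous proposition yields the claimed isomorphism.

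The mildly delicate point I expect is that the cosheaf structure maps of $\mathcal{S}$ are not written down explicitly in the paper; they must be pinned down from the defining order on $\mathcal{P}_\R$, and one has to verify that the natural candidate $w_s\mapsto w_{\pi_{\tau,\tau'}(s)}$ is well defined (i.e.\ its value indeed lies in $\mathcal{E}(\tau',\sigma')$) and functorial. No more substantial obstacle is anticipated, as the heavy geometric content — the regular CW decomposition of $X_{T^\circ,T,\varepsilon}$ — is already handled by the preceding proposition.
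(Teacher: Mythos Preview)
Your proposal is correct and is exactly the argument the paper has in mind: the authors give no proof of this proposition, stating only that ``the proofs are similar as in \cite{ren_shaw:bound}'', and the chain-level isomorphism you describe via the forgetful map $\pi$ and the basis $\{w_s\}$ is precisely that argument. Your verification that the cosheaf maps $w_s\mapsto w_{\pi_{\tau,\tau'}(s)}$ are well defined (via the commuting triangle of projections onto $N\otimes\FF_2/\gamma^\perp$) is the only point requiring care, and you handle it correctly.
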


\begin{prop}\label{prop:filtration}
Let $\mathcal{E}$ be a real structure on $(\Delta,T)$. There exists a filtration of cosheaves on $\mathcal{P}(T^\circ,T)$
$$
0=\mathcal{K}_{n+1}\subset\mathcal{K}_{n}\subset\mathcal{K}_{n-1}\subset\cdots\subset \mathcal{K}_0=\mathcal{S}
$$
such that 
$$
\mathcal{K}_{p}/\mathcal{K}_{p+1}\simeq \mathcal{F}_p.
$$
\end{prop}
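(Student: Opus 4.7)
\medskip

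\textbf{Proof plan.} The construction I propose is the natural $\FF_2$-filtration on the sign cosheaf by "dimension of affine flats lying inside $\mathcal E(\tau,\sigma)$", which is the $\FF_2$-analogue of the filtration by jet depth used by the second author and Shaw in \cite{ren_shaw:bound}. Concretely, for each $(\tau,\sigma)\in\mathcal{P}^1(T^\circ,T)$ I would set
\[
\mathcal K_p(\tau,\sigma)\;=\;\operatorname{span}_{\FF_2}\bigl\{\chi_{s_0+W}\;:\;s_0+W\subset\mathcal E(\tau,\sigma),\ W\subset N\otimes\FF_2/\langle C(\tau)\rangle\text{ a linear subspace},\ \dim W\geq p\bigr\},
\]
where $\chi_{s_0+W}:=\sum_{s\in s_0+W}w_s\in\mathcal S(\tau,\sigma)$. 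By definition $\mathcal K_{p+1}\subset\mathcal K_p$, and one immediately checks $\mathcal K_0=\mathcal S$ (take $W=0$) and $\mathcal K_{n+1}=0$ (each affine piece of $\mathcal E(\tau,\sigma)$ has dimension at most $n$, being contained in some $\gamma^\perp/\langle C(\tau)\rangle$ with $\gamma$ an edge of $\sigma$).

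The first verification is that each $\mathcal K_p$ is a sub-cosheaf of $\mathcal S$. Cosheaf maps of $\mathcal S$ act on generators by $w_s\mapsto w_{\pi_{\tau_1,\tau}(s)}$ along $\tau\subset\tau_1$, and as the natural inclusion along $\sigma\subset\sigma_1$. Under the first map, $\chi_{s_0+W}\mapsto 2^{\dim(\ker\pi|_W)}\chi_{\pi(s_0)+\pi(W)}$, which is either $0$ (if the kernel on $W$ is non-trivial) or again the characteristic function of an affine subspace of dimension $\geq p$; under the second map the generator is preserved. Both land in $\mathcal K_p$ on the target.

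Next I would define the quotient map. For an edge $\gamma\subset\sigma$, the affine piece $\{s:\pi_\gamma(s)=\mathcal E_\gamma\}$ is a torsor over $\gamma^\perp/\langle C(\tau)\rangle$, so any $p$-flat $s_0+W\subset\mathcal E(\tau,\sigma)$ is contained in some such piece and forces $W\subset\gamma^\perp/\langle C(\tau)\rangle$. I would send
\[
\Psi_p\colon\mathcal K_p(\tau,\sigma)\longrightarrow\mathcal F_p(\tau,\sigma)\otimes\FF_2,\qquad \chi_{s_0+W}\longmapsto [W]_\gamma,
\]
where $[W]_\gamma\in\bigwedge^p(\gamma^\perp/\langle C(\tau)\rangle)$ is the generator $v_1\wedge\cdots\wedge v_p$ for any basis of $W$ (well defined over $\FF_2$). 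The expression is independent of the choice of $\gamma$ because in the sum $\sum_\gamma\bigwedge^p(\gamma^\perp/\langle C(\tau)\rangle)$ the different copies of $[W]_\gamma$ are identified. The map vanishes on $\mathcal K_{p+1}$ by a one-line characteristic-$2$ check: a $(p+1)$-flat $s_0+W'$ decomposes as $\chi_{s_0+W_0}+\chi_{s_0+v+W_0}$ for any hyperplane $W_0\subset W'$ and any $v\in W'\setminus W_0$, and both summands map to the same generator $[W_0]_\gamma$. Surjectivity of $\overline\Psi_p$ follows from the fact that pure wedges span $\bigwedge^p(\gamma^\perp/\langle C(\tau)\rangle)$ and every such pure wedge $v_1\wedge\cdots\wedge v_p$ is the image of $\chi_{s_0+\langle v_1,\ldots,v_p\rangle}$ for any $s_0$ with $\pi_\gamma(s_0)=\mathcal E_\gamma$.

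The real work is injectivity of $\overline\Psi_p\colon\mathcal K_p/\mathcal K_{p+1}\to\mathcal F_p\otimes\FF_2$, which I would handle edge by edge and then sum with a Mayer--Vietoris-style argument on the decomposition of $\mathcal E(\tau,\sigma)$ as a union of affine subspaces indexed by edges $\gamma\subset\sigma$: fixing $\gamma$, the group generated by $\chi_{s_0+W}$ with $W\subset\gamma^\perp/\langle C(\tau)\rangle$ of dimension $p$, modulo the subgroup where $\dim W\geq p+1$, is canonically the $\FF_2$-linear dual description of $\bigwedge^p(\gamma^\perp/\langle C(\tau)\rangle)$ (this is the standard identification between $\FF_2$-characteristic functions of affine $p$-flats in a vector space and $p$-fold wedges, obtained by Möbius inversion on the lattice of affine subspaces). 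The axioms of the real phase structure, requiring that on any $2$-simplex the three affine hyperplanes $\mathcal E_\gamma$ meet pairwise in distinct codimension-$2$ flats, ensure that the pairwise overlaps $\{s:\pi_\gamma(s)=\mathcal E_\gamma\}\cap\{s:\pi_{\gamma'}(s)=\mathcal E_{\gamma'}\}$ behave well and the identification glues consistently across $\gamma$.

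Finally I would check compatibility of $\Psi_p$ with cosheaf maps: both the projection $\tau\subset\tau_1$ (which annihilates $[W]$ exactly when $\pi_{\tau_1,\tau}|_W$ has a kernel) and the inclusion $\sigma\subset\sigma_1$ (which is a summand inclusion on $\mathcal F_p$) intertwine with the formulas above. This yields the isomorphism of cosheaves $\mathcal K_p/\mathcal K_{p+1}\cong\mathcal F_p$ and completes the proof. I expect the main obstacle to be the injectivity step together with its naturality in $\gamma$, i.e.\ the verification that the Möbius identification on each affine piece of $\mathcal E(\tau,\sigma)$ is compatible with the way the pieces overlap, which is where the axioms of a real phase structure are used in an essential way.
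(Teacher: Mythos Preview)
Your construction is essentially the one the paper uses: the paper does not give a proof but only recalls (citing \cite{ren_shaw:bound}) the definition $\mathcal K_p(\tau,\gamma)=\FF_2\langle\,\sum_{\varepsilon\in H}w_\varepsilon : H\in\aff_p(\mathcal E(\tau,\gamma))\,\rangle$ for edges $\gamma$, and then $\mathcal K_p(\tau,\sigma)=\sum_{\gamma\subset\sigma,\,\dim\gamma=1}\mathcal K_p(\tau,\gamma)$, which is exactly your ``span of characteristic functions of affine $p$-flats'' filtration. Your sketch of the quotient map $\chi_{s_0+W}\mapsto[W]$ and of the injectivity argument is the standard one from \cite{ren_shaw:bound}.

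One small point worth tightening: you assert that any affine $p$-flat contained in $\mathcal E(\tau,\sigma)$ lies in a single piece $\{s:\pi_\gamma(s)=\mathcal E_\gamma\}$ for some edge $\gamma$. Over $\FF_2$ a union of hyperplanes can contain affine subspaces not lying in any single hyperplane, so this claim is not automatic. The paper's formulation avoids the issue by \emph{defining} $\mathcal K_p(\tau,\sigma)$ as the sum over edges of $\mathcal K_p(\tau,\gamma)$, i.e.\ only using flats that sit inside one affine piece from the start; with that definition your map $\Psi_p$ is well defined by construction and the rest of your argument goes through unchanged.
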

For the reader's convenience, we recall briefly the definition of the filtration $\mathcal{K}_p$. If $\dim \sigma =1$, then $\mathcal{E}(\tau,\sigma)$ is an affine space of direction $\sigma^\perp / \langle C(\tau) \rangle $. Define in this case 
$$\mathcal{K}_p(\tau,\sigma)=\FF_2 \langle \sum_{\varepsilon\in H}w_\varepsilon \:\vert \: H\in \aff_p(\mathcal{E}(\tau,\sigma))\rangle,$$ where $\aff_p(\mathcal{E}(\tau,\sigma))$ is the space of affine subspaces of $\mathcal{E}(\tau,\sigma)$ of dimension $p$. In the general case, consider the sum
$$
\mathcal{K}_p(\tau,\sigma):=\sum_{\gamma\subset\sigma \: \dim\gamma=1}\mathcal{K}_p(\tau,\gamma).
$$
The spectral sequence $(E_{p,q}^{k}, \delta^{[k]})$ associated to this filtration has first page $$E^1_{p,q}=H_q(X_{T^\circ,T};\mathcal{F}_p)$$
and converges to $H_\bullet(X_{T^\circ,T};\mathcal{S})$ which by Proposition \ref{prop:realposethom} is the homology of $X_{T^\circ,T,\varepsilon}$.

\section{Connectedness} \label{connect}
To simplify a little the notations in this section, for a $\Z$-cosheaf $\mathcal{G}$, we will denote by $\mathcal{G}^{\FF_2}$ the tensor product $\mathcal{G}\otimes \FF_2$.
Let   $D=\sum_{\rho\in\Sigma_T(1)}a_\rho D_\rho$ be a toric divisor on $\C\Sigma_{T}$ with $\FF_2$-coefficients. Denote by $\supp(D)$ the support of $D$, meaning the set of rays in $\Sigma_T$ with a non-vanishing coefficient. Again, to simplify the notations we will identify the ray $\rho$ with the segment $S(\rho)$ in $T$. The divisor $D$ defines, by the same formula, a divisor on $\T\Sigma_T$ and when restricted to $X_{T,T^\circ}$ it defines a class in $H_{n-1}(X_{T,T^\circ};\mathcal{F}_{n-1}^{\FF_2})$ represented by the cycle
$$
D_{|X_{T,T^{\circ}}} :=\sum_{
\tiny{\begin{array}{c}
\rho\in \supp(D) \\ \dim(\delta)=1 \text{ and } \delta \subset \min(\rho)^\vee
\end{array}}} (\rho,\delta)\otimes v_{\rho,\delta},
$$
where $v_{\rho,\delta}$ is the generator of $\Lambda_{\FF_2}^{n-1}(\frac{\delta^\perp}{\rho})$. 
\begin{rem}
If $T$ and $T^\circ$ are convex, the Viro polynomial defines the mirror family of complex hypersurfaces $X_t^{\circ}$ in $\C\Sigma_T$, and the divisor $D$ restricts to a family of divisors $D_{|X_t^{\circ}}=\sum_{\rho\in\Sigma_T(1)}a_\rho D_\rho\cap X^{\circ}_t$. Each $D_\rho\cap X^{\circ}_t$ when restricted to the toric orbit $\mathcal{O}^\C_\rho$ defines in turn a Viro polynomial which is the one of the tropical hypersurface $X_{T,T^\circ}\cap \mathcal{O}^\T_\rho$. The facets of $X_{T,T^\circ}\cap \mathcal{O}^\T_\rho$ are precisely dual to the edges $\delta$ of $\Delta^\circ$ such that $\delta \subset \min(\rho)^\vee$.
\end{rem}
On the other hand the divisor $D$ also defines a real phase structure $\mathcal{E}=\mathcal{E}^D$ on $(\Delta,T)$ and a corresponding patchworking $X_{T^\circ,T,\varepsilon}$.  In this section we will prove the following: 

\begin{thm} \label{connectedness}
Assume that 
\begin{equation}\label{vanishinghypothesis}
H_n(X_{T^\circ,T};\mathcal{F}^{\FF_2}_k)=0 \:\: \text{ for all }\:\: 0<k<n.
\end{equation}
 Then the patchworking $X_{T^\circ,T,\varepsilon}$ is connected if and only if $[D_{|X_{T,T^{\circ}}}] = 0$ in $H_{n-1}(X_{T,T^\circ};\mathcal{F}_{n-1}^{\FF_2})$. 
\end{thm}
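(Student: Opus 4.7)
The approach is to combine the spectral sequence of Proposition~\ref{prop:filtration} with the mirror isomorphism of Theorem~\ref{thm:main1} in order to translate the question of connectedness into the vanishing of an explicit tropical class. Since $X_{T^\circ,T,\varepsilon}$ is a closed PL $n$-manifold, its number of connected components equals $\dim_{\FF_2} H_n(X_{T^\circ,T,\varepsilon};\FF_2)$, which by Proposition~\ref{prop:realposethom} equals $\dim_{\FF_2} H_n(X_{T^\circ,T};\mathcal S^{\FF_2})$. The spectral sequence with $E^1_{p,q}=H_q(X_{T^\circ,T};\mathcal F_p^{\FF_2})$ and first differential $\delta^{[1]}_D\colon H_q(\mathcal F_p^{\FF_2})\to H_{q-1}(\mathcal F_{p+1}^{\FF_2})$ converges to this group. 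Hypothesis \eqref{vanishinghypothesis} forces the row $q=n$ of $E^1$ to be concentrated at $p=0$ and at the corner $p=n$, and the corner $E^1_{n,n}$ admits no differentials for dimensional reasons; so the number of components is governed by the fate of the fundamental class $S\in E^1_{0,n}\cong\FF_2$ under the successive differentials $\delta^{[k]}_D(S)$, and is at most two.

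The main step is to compute $\delta^{[1]}_D(S)$ on chains and identify it with $[D_{|X_{T,T^\circ}}]$ via the canonical mirror isomorphism \eqref{trop_ms} of Theorem~\ref{thm:main1}. On the chain level, $S$ is the sum of all top cells of $X_{T^\circ,T}$ with the canonical generator of $\mathcal F_0=\Z$. The plan is to lift $S$ to a chain in $\mathcal K_0=\mathcal S^{\FF_2}$ using the real phase structure $\mathcal E^D$ built from the sign distribution $\varepsilon$ associated to $D=\sum_v a_v D_v$, compute its cellular boundary, and project into $\mathcal K_1/\mathcal K_2\cong\mathcal F_1$; the resulting cycle in $H_{n-1}(X_{T^\circ,T};\mathcal F_1^{\FF_2})$ has nonzero contribution exactly along the edges of $T^\circ$ dual to rays $\rho$ of $\Sigma_T$ with $a_\rho\neq 0$. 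Tracing this cycle through the chain-level mirror isomorphism constructed in the proof of Theorem~\ref{thm:main1}, namely through the short exact sequences \eqref{seq:1}--\eqref{seq:2}, the explicit lift of Example~\ref{inf:sph}, and the contraction formula \eqref{contr_map} with $\Omega_N$, transforms it, coefficient by coefficient, into the cycle $\sum_{\rho,\delta}(\rho,\delta)\otimes v_{\rho,\delta}$ defining $D_{|X_{T,T^\circ}}$. In particular $\delta^{[1]}_D(S)$ and $[D_{|X_{T,T^\circ}}]$ correspond under the mirror isomorphism, so one vanishes if and only if the other does.

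It remains to control the higher differentials: if $[D_{|X_{T,T^\circ}}]\neq 0$ then $S$ is killed already at the $E^2$ page, whereas if $[D_{|X_{T,T^\circ}}]=0$ one iteratively improves the lift $\tilde S\in\mathcal K_0$ by absorbing the boundary class representing $\delta^{[k]}_D(S)$ (written as a boundary in $\mathcal F_{k-1}$) into $\tilde S$ itself; at each stage the obstruction to continuing the iteration lies in a tropical homology group to which \eqref{vanishinghypothesis} applies, so the process terminates with a genuine cycle in $\mathcal S^{\FF_2}$, making $S$ survive to $E^\infty$. Combined with the first paragraph, this yields the equivalence asserted in the theorem.

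The main obstacle is the chain-level identification in the second paragraph: carrying $S$ through the sign-cosheaf filtration $\mathcal K_\bullet$ and simultaneously through the contraction with $\Omega_N$ underlying the isomorphism of Theorem~\ref{thm:main1}, and matching the output coefficient by coefficient with $D_{|X_{T,T^\circ}}$, is the combinatorial crux of the argument. Once this identification is in place, the remainder of the proof is formal spectral-sequence bookkeeping.
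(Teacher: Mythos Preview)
Your outline of the ``connected'' direction (paragraphs 1, 2 and the first clause of 3) matches the paper's Theorem~\ref{thm:connectedness}: lift $S$ to $\mathcal S_D$, compute $\partial\tilde S$, project to $\mathcal F_1^{\FF_2}$, and transport through the mirror isomorphism to obtain $[D_{|X_{T,T^\circ}}]$. The role you assign to hypothesis~\eqref{vanishinghypothesis} in the first paragraph is also correct: it forces the only possible contributions to $H_n$ to come from $E^\infty_{0,n}$ and $E^\infty_{n,n}$.

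The gap is in the ``disconnected'' direction. You claim that when $[D_{|X_{T,T^\circ}}]=0$ the lift can be iteratively improved and that ``at each stage the obstruction to continuing the iteration lies in a tropical homology group to which~\eqref{vanishinghypothesis} applies''. This is not so. The differential $\delta^{[k]}$ sends $E^k_{0,n}$ to $E^k_{k,n-1}$, a subquotient of $H_{n-1}(X_{T^\circ,T};\mathcal F_k^{\FF_2})$; the vanishing hypothesis concerns $H_n(X_{T^\circ,T};\mathcal F_k^{\FF_2})$, degree $n$, not $n-1$. In fact~\eqref{vanishinghypothesis} only tells you that the incoming differentials to row $q=n$ vanish, which is why the count of components is at most two; it says nothing about the outgoing differentials $\delta^{[k]}(S)$ for $k\ge 2$. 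So the inductive scheme you sketch has no input to get started.

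The paper handles this direction by a direct chain-level argument that does \emph{not} use~\eqref{vanishinghypothesis}: from $\delta^{[1]}(S)=0$ one knows the cycle $\gamma_1$ (your mirror of $D_{|X_{T,T^\circ}}$) is exact in $\mathcal M_1^{\FF_2}$; the coefficients $z_{(\tau,\rho)}$ of a bounding chain are used to build vectors $v_\rho\in\mathcal E^D_{(\tau,\rho)}$ and hence a new lift $\tilde S=\sum(\tau,\rho)\otimes w_{v_\rho}$. One then shows, by explicit combinatorics on the cells $(\tau,\lambda)$ with $\dim\lambda=2$, that $\partial\tilde S$ is itself a boundary in $C_{n-1}(\mathcal J(T^\circ,T);\mathcal K_1^D)$ (an explicit primitive $\alpha$ supported on cells $(\tau,\lambda_\infty)$ is produced). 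This gives $[\partial\tilde S]=0$ in $H_{n-1}(\mathcal K_1^D)$, hence $\delta^{[k]}(S)=0$ for all $k\ge 1$ at once. This step is the heart of the disconnected direction, and contrary to your last paragraph it is not formal spectral-sequence bookkeeping but a genuine construction specific to the central-triangulation geometry.
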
 
\begin{rem}
The hypothesis (\ref{vanishinghypothesis}) is satisfied if $\Delta$ is a  polytope giving rise to a non-singular toric variety. This follows from Proposition 3.2 in \cite{brugalle2022combinatorial} (see also \cite{lefschetz:ARS}).
We expect that in the convex case, the tropical divisor class $[D_{|X_{T,T^{\circ}}}]$ vanishes in $H_{n-1}(X_{\text{trop}}^\circ;\mathcal{F}_{n-1}^{\FF_2})$ if and only if, for $t$ small enough, the class of the complex divisor $[D_{|X^{\circ}_t}]$ vanishes in the Picard group modulo $2$ of $X^{\circ}_t$.
\end{rem}
Recall that the real phase structure $\mathcal{E}^D$ defines a sign cosheaf $\mathcal{S}_D$ on $\mathcal{P}(T^\circ,T)$  and induces a spectral sequence $(E^k_{p,q}, \delta^{[k]})$ converging to $H_*(X_{T^\circ,T}; \mathcal{S}_D)$. Obviously the spectral sequence depends on $D$, but to keep notation simple we remove $D$ from the decorations.  In particular the number of connected components of $X_{T^\circ,T,\varepsilon}$ is given by the dimension of the homology group
\[ H_{n}(X_{T^\circ,T,\varepsilon}, \FF_2) \cong \bigoplus_{k} E^{\infty}_{k,n}. \]
Therefore, with assumption \eqref{vanishinghypothesis}, the number of connected components of $X_{T^\circ,T,\varepsilon}$ can be either one or two.   If the differential at the first page
\[ \delta^{[1]}: H_n(X_{T^\circ,T}; \mathcal{F}^{\FF_2}_0) \rightarrow  H_{n-1}(X_{T^\circ,T}; \mathcal{F}^{\FF_2}_1) \] 
does not vanish, i.e. it is injective, then $X_{T^\circ,T,\varepsilon}$ is connected. If it vanishes we need to study also higher differntials $\delta^{[k]}$. Recall that $X_{T^\circ, T}$ contains the sphere $S = S_{T^\circ, T}$ (see \eqref{trop:sphere}) and it is homotopically equivalent to it. One direction of Theorem \ref{connectedness} follows from 

\begin{thm}\label{thm:connectedness}
Let $S\in H_n(X_{T^\circ,T}; \mathcal{F}^{\FF_2}_0)$ be the generator. Then $\delta^{[1]}(S)$ and $[D_{|X_{T,T^{\circ}}}]$ are mirror classes. In particular, if  $[D_{|X_{T,T^{\circ}}}] \neq 0$ then $X_{T^\circ,T,\varepsilon}$ is connected. 
\end{thm}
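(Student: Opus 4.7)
The plan is to produce an explicit cycle for $\delta^{[1]}(S)$, transport it through the three maps making up the mirror isomorphism of Theorem~\ref{thm:main1}, and recognize the output as $[D_{|X_{T,T^\circ}}]$; the connectedness conclusion then follows formally from the spectral sequence.

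First I will produce an explicit representative of $\delta^{[1]}(S)$. The class $S$ is represented by the sphere cycle $\sum_\sigma (0,\sigma)\otimes 1$ summed over edges $\sigma$ of $T$ through $0$. Choose a lift $\epsilon_\sigma \in \mathcal{E}^D(0,\sigma)\subset N\otimes \FF_2$ for each such $\sigma$, giving $\tilde S = \sum_\sigma (0,\sigma)\otimes w_{\epsilon_\sigma} \in C_n(\mathcal{K}_0)$. Its boundary is supported on cells $(0,\sigma'')$ with $\sigma''$ a triangle of $T$ through $0$; such a triangle has exactly two edges $\sigma_1,\sigma_2$ containing $0$, and the real phase condition ensures that $\mathcal{E}^D(0,\sigma_1)\cap\mathcal{E}^D(0,\sigma_2)$ is nonempty. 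Picking any $\epsilon^*$ in this intersection lets one decompose $w_{\epsilon_{\sigma_1}}+w_{\epsilon_{\sigma_2}}$ as a sum of two elements of $\mathcal{K}_1$, whose projection to $\mathcal{F}_1(0,\sigma'')$ yields the representative $\epsilon_{\sigma_1}+\epsilon_{\sigma_2}$ viewed inside $\sigma_1^\perp+\sigma_2^\perp\subseteq \mathcal{F}_1(0,\sigma'')$.

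Next I will pass to the refinement $\mathcal{J}(T^\circ,T)$ and observe that each subdivided cell $(\tau,\sigma'')$ (with $\tau\in\partial T^\circ$ and $\sigma''_\infty\subset \min(C(\tau))^\vee$) lies in $\mathcal{J}^S(T^\circ,T)$; hence the surjection $\mathcal{F}_1\twoheadrightarrow \mathcal{M}_1^\Delta$ automatically lands in the image of $\mathcal{M}_1$, producing a sphere cycle in $\mathcal{M}_1$. The contraction isomorphism \eqref{contr_map} transports this cell by cell to a sphere cycle in $\mathcal{M}_{n-1}$ on $\mathcal{J}^S(T,T^\circ)$, with coefficient $[\iota_{v\wedge (\epsilon_{\sigma_1}+\epsilon_{\sigma_2})}\Omega_N]$ at $(\sigma''_\infty,\hat\tau)$ for a chosen vertex $v$ of $\tau$. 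Applying Example~\ref{inf:sph} in reverse on the mirror side then moves this sphere cycle to a cycle $\gamma_\infty$ at infinity in $\mathcal{F}_{n-1}$, supported on cells $(\rho,\delta)$ with $\rho$ an edge of $T$ through $0$.

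The main technical obstacle is the cell-by-cell identification of $\gamma_\infty$ with $\sum_{\rho\in\supp D}\sum_\delta (\rho,\delta)\otimes v_{\rho,\delta}$. At each ray $\rho=[0,v_\rho]$, one must check that summing $\iota_{v\wedge(\epsilon_{\sigma_1}+\epsilon_{\sigma_2})}\Omega_N$ over triangles $\sigma''$ incident to $\rho$, combined with the adjustment chain from Example~\ref{inf:sph}, yields $v_{\rho,\delta}$ precisely when $a_\rho\neq 0$; this reduces to the local statements that $\mathcal{E}^D$ records whether $\varepsilon(v_\rho)=\varepsilon(\mathbf{o})$, and that contracting with $\Omega_N$ exchanges the generator of $N\otimes\FF_2/\rho$ for the generator of $\bigwedge^{n-1}(\delta^\perp/\langle\rho\rangle)$. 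Once this identification is made, $\delta^{[1]}(S)$ and $[D_{|X_{T,T^\circ}}]$ are mirror classes; and if $[D_{|X_{T,T^\circ}}]\neq 0$ then $\delta^{[1]}(S)\neq 0$, so $S$ dies on the $E^2$-page. Under hypothesis~\eqref{vanishinghypothesis} the only other contribution to $H_n(X_{T^\circ,T,\varepsilon};\FF_2)$ is the surviving fundamental class $E^1_{n,n}=\FF_2$, so $X_{T^\circ,T,\varepsilon}$ is connected.
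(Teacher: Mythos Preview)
Your overall strategy matches the paper's: compute $\delta^{[1]}(S)$ explicitly, push it through the mirror isomorphism, and identify the result with $[D_{|X_{T,T^\circ}}]$. The crucial difference is in the choice of lift $\tilde S$, and this is where your argument develops a gap.

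The paper works in the refinement $\mathcal{J}(T^\circ,T)$ from the outset and chooses the lift so that the coefficient at $(\tau,\rho)$ \emph{depends on $\tau$}: it is $w_0$ when $\rho\notin\supp D$ and $w_\tau$ when $\rho\in\supp D$. The boundary then has two pieces: a term $A=\gamma_1$ supported on cells $(\delta,\rho)$ with $\dim\delta=1$, carrying the coefficient $\tau_1-\tau_2$, and a term $B=\gamma_2$ on cells $(\tau,\lambda)$ with $\dim\lambda=2$, carrying the coefficient $\tau$ or $0$. The point is that $\gamma_2$ vanishes in $\mathcal{M}_1$ (the coefficient $\tau$ lies in $\langle C(\tau)\rangle$), so $\delta^{[1]}(S)=[\gamma_1]$ in $\mathcal{M}_1$; the mirror of $\gamma_1$ lands on cells $(\rho_\infty,\hat\delta)$ and is then homologous via Example~\ref{inf:sph} to $D_{|X_{T,T^\circ}}$, which is supported on cells $(\rho,\delta)$.

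Your lift, chosen in $\mathcal{P}(T^\circ,T)$ and hence constant over the refinement, produces no $A$-type boundary at all: the entire boundary is supported on cells $(\tau,\sigma'')$ with $\dim\sigma''=2$, with coefficient $\epsilon_{\sigma_1}+\epsilon_{\sigma_2}$. Modulo $\langle\tau\rangle$ this does \emph{not} vanish in general, so what you transport under the contraction map is a sphere cycle on cells $(\sigma''_\infty,\hat\tau)$. Applying Example~\ref{inf:sph} in reverse then yields a cycle at infinity supported on cells $(\sigma'',\tau)$ with $\dim\sigma''=2$ and $\dim\tau=0$, \emph{not} on cells $(\rho,\delta)$ with $\dim\rho=\dim\delta=1$ as you claim. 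Your ``main technical obstacle'' paragraph therefore misidentifies the support of $\gamma_\infty$, and the proposed local check (summing over triangles incident to $\rho$) does not make sense as stated, since distinct triangles $\sigma''$ give distinct cells rather than contributions to a common cell $(\rho,\delta)$. Bridging the gap between your representative and $D_{|X_{T,T^\circ}}$ would require producing an explicit $n$-chain witnessing the homology, which is essentially the content of the paper's clever lift choice. The connectedness conclusion at the end is fine.
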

\begin{proof}  We work with the refinement $\mathcal{J}(T^\circ,T)$ of $\mathcal{P}(T^\circ,T)$. The generator of $H_n(\mathcal{J}(T^\circ,T); \mathcal{F}^{\FF_2}_0)$ is given by
$$
S:= \sum_{
\tiny{\begin{array}{c}
(\tau,\sigma)\in\mathcal{J}^S(T^\circ,T) \\ \dim(\tau)=0 \text{ and } \dim(\sigma)=1
\end{array}}} (\tau,\sigma)\otimes 1.
$$
Then a lift of $S$ in $C_n(\mathcal{J}(T^\circ,T);\mathcal{S}_{D})$ is given by $\tilde{S}=S_0+S_1$ with
$$
S_0=\sum_{\rho\notin\text{supp}(D)}\sum_{
\tiny{\begin{array}{c}
(\tau,\rho )\in\mathcal{J}(T^\circ,T) \\ \dim(\tau)=0
\end{array}}} (\tau,\rho)\otimes w_{0},
$$
and
$$
S_1:=\sum_{\rho\in\text{supp}(D)}\sum_{
\tiny{\begin{array}{c}
(\tau, \rho)\in\mathcal{J}(T^\circ,T) \\ \dim(\tau)=0
\end{array}}} (\tau, \rho)\otimes w_{\tau}.
$$
The boundary of the chain $\tilde{S}$ in $C_{n-1}(\mathcal{J}(T^\circ,T);\mathcal{K}_1^D)$ is given by $A+B$ with
$$
A=\sum_{\rho\in\text{supp}(D)}\sum_{
\tiny{\begin{array}{c}
(\delta, \rho )\in\mathcal{J}(T^\circ,T) \\ \dim(\delta)=1
\end{array}}} (\delta,\rho)\otimes (w_{\tau_1}+w_{\tau_2}),
$$
where $\tau_1$ and $\tau_2$ are the two vertices of $\delta$, and
$$
B=\sum_{
\tiny{\begin{array}{c}
(\tau,\lambda)\in\mathcal{J}^S(T^\circ,T) \\ \dim(\tau)=0\text{ and } \dim(\lambda)=2
\end{array}}} (\tau,\lambda)\otimes (w_{(\tau,\lambda,\rho_1)}+w_{(\tau,\lambda,\rho_2)}),
$$
where $\rho_1$ and $\rho_2$ are the two rays of $\Sigma_T$ adjacent to $C(\lambda)$, and 
$$
w_{(\tau, \lambda,\rho_i)}=\left\lbrace \begin{array}{c} w_0 \text{ if } \rho_i\notin \text{supp}(D), \\
w_{\tau} \text{ if } \rho_i\in \text{supp}(D).
\end{array}\right.
$$
So finally, the image of $S$ by the map $\delta^{[1]}$ is represented by $\gamma_1+\gamma_2$, with 
\[
\gamma_1=\sum_{\rho\in\text{supp}(D)}\sum_{
\tiny{\begin{array}{c}
(\delta,\rho)\in\mathcal{J}(T^\circ,T) \\ \dim(\delta)=1
\end{array}}} (\delta, \rho)\otimes (\tau_1 - \tau_2)
\]
 and
$$
\gamma_2=\sum_{
\tiny{\begin{array}{c}
(\tau,\lambda)\in\mathcal{J}(T^\circ,T) \\ \dim(\tau)=0\text{ and } \dim(\lambda)=2
\end{array}}} (\tau,\lambda)\otimes v_{(\tau,\lambda)},
$$
where $v_{(\tau,\lambda)}=\tau$ if only one of the two rays of $\Sigma_T$ adjacent to $C(\lambda)$ is contained in the support of $D$, and is equal to $0$ otherwise. By definition of the cosheaves $\mathcal{M}_p$, the chain $\gamma_2$ is equal to $0$ in $C_{n-1}(\mathcal{J}(T^\circ,T); \mathcal{M}_1^{\FF_2})$. So in $H_{n-1}(\mathcal{J}(T^\circ,T); \mathcal{M}_1^{\FF_2})$, one has
$$
\delta^{[1]}(S)=\left[\gamma_1\right].
$$
Now, the mirror class of $\left[\gamma_1\right]$ in $H_{n-1}(\mathcal{J}(T,T^\circ); \mathcal{M}_{n-1}^{\FF_2})$ is given by
$$
\check{\gamma}_1= \sum_{
\tiny{\begin{array}{c}
\rho\in\text{supp}(D) \\ \dim(\delta)=1 \text{ and } \delta \subset \min(\rho)^\vee
\end{array}}} (\rho_\infty,\hat{\delta})\otimes \iota_{v\wedge v_\delta}\Omega_N,
$$
where $v$ is a vertex of $\delta$ and we have set $v_{\delta} = \tau_1 -\tau_2$, the direction of $\delta$. Notice that the submodule generated by $v$
and $v_\delta$ is equal to $\langle \hat{\delta} \rangle$ and so 
$ \iota_{v\wedge v_\delta}\Omega_N$ is the generator of  $ \Lambda^{n-1}  \langle \hat{\delta} \rangle ^\perp$. In particular $ \iota_{v\wedge v_\delta}\Omega_N$ can also be seen as a generator of $(\mathcal M_{n-1}^{\Delta})^{\FF_2}(\rho_{\infty}, \delta)$. 
Therefore, as in Example \ref{inf:sph}, the chains $\check{\gamma}_1$ and $D_{|X_{T,T^{\circ}}}$ are homologous by the following chain in $C_{n}(\mathcal{J}(T,T^\circ);(\mathcal M_{n-1}^{\Delta})^{\FF_2}):$
$$C = \sum_{
\tiny{\begin{array}{c}
\rho\in\text{supp}(D) \\ \dim(\delta)=1 \text{ and } \delta \subset \min(\rho)^\vee
\end{array}}} (\rho_\infty,\delta)\otimes \iota_{v\wedge v_\delta}\Omega_N.
$$
This concludes the proof. 
\end{proof}

We now investigate what happens when $\delta^{[1]}S = 0$, (i.e. if $[D_{|X_{T,T^{\circ}}}] = 0$). In this case the term $E^{2}_{0,n}$ is equal to $H_n(X_{T^\circ,T}; \mathcal{F}^{\FF_2}_0)$, and thus generated by $S$. More generally the term $E^{k}_{0,n}$ is equal to $H_n(X_{T^\circ,T}; \mathcal{F}^{\FF_2}_0)$ as long as $\delta^{[j]} S = 0$ for all $j < k$.  The next theorem proves the other direction in Theorem \ref{connectedness}.
\begin{thm} If $\delta^{[1]} S = 0$, then
\[ E^{\infty}_{0,n} = H_n(X_{T^\circ,T}; \mathcal{F}^{\FF_2}_0) = \FF_2 \]
 In particular, assuming (\ref{vanishinghypothesis}), if $[D_{|X_{T,T^{\circ}}}] = 0$ then the patchworking $X_{T^\circ,T,\varepsilon}$ has two connected components. 
\end{thm}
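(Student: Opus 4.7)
The plan is to compute $\dim H_n(X_{T^\circ,T,\varepsilon};\FF_2)$ using the Renaudineau--Shaw spectral sequence from Section \ref{rps}, observing that for a closed $n$-manifold with $\FF_2$ coefficients this dimension equals the number of connected components. The hypothesis \eqref{vanishinghypothesis}, together with mirror symmetry \eqref{trop_ms}, forces row $n$ of the $E^1$ page to have only two non-zero entries: $E^1_{0,n}=H_n(X_{T^\circ,T};\mathcal{F}_0^{\FF_2})=\FF_2$, generated by the sphere class $S$, and $E^1_{n,n}\cong H_n(X_{T,T^\circ};\mathcal{F}_0^{\FF_2})=\FF_2$, generated by the mirror fundamental class $\check S$. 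This immediately recovers the upper bound $\dim H_n\leq 2$ mentioned in the introductory remark.

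The first concrete step is to verify that $\check S$ is a permanent cycle: every outgoing differential $\delta^{[r]}\colon E^r_{n,n}\to E^r_{n+r,n-1}$ vanishes since $\mathcal{F}_p=0$ for $p>n$ (the quotients $\lambda^\perp/\langle C(\tau)\rangle$ have dimension at most $n$), and every incoming one comes from $E^r_{n-r,n+1}=0$ because $\mathcal{J}(T^\circ,T)$ is $n$-dimensional. Hence $E^\infty_{n,n}=\FF_2$, which already gives $\dim H_n\geq 1$. The main step is then to show that under the assumption $\delta^{[1]}(S)=0$ the class $S$ is likewise a permanent cycle. I would iterate the chain-level construction from the proof of Theorem \ref{thm:connectedness}: starting with the lift $\tilde S=S_0+S_1\in C_n(\mathcal{S}_D)$, whose boundary projects to $\delta^{[1]}(S)=[\gamma_1+\gamma_2]$, the vanishing hypothesis provides $\eta_1\in C_n(\mathcal{F}_1^{\FF_2})$ with $\partial\eta_1=\gamma_1+\gamma_2$; lifting to $\tilde\eta_1\in C_n(\mathcal{K}_1)$ and replacing $\tilde S$ by $\tilde S-\tilde\eta_1$ yields a chain with boundary in $C_{n-1}(\mathcal{K}_2)$. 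Inductively, the obstruction $\delta^{[k]}(S)\in E^k_{k,n-1}$ at stage $k$ is represented by an explicit chain supported on $\mathcal{J}^\infty(T^\circ,T)$, which under the mirror isomorphism \eqref{trop_ms} corresponds to an iterated cup-product-type expression in the class $[D_{|X_{T,T^\circ}}]$; since the latter is already zero, all higher obstructions vanish and $S$ survives to $E^\infty$.

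The main obstacle is formalizing this last step, namely identifying each higher differential $\delta^{[k]}(S)$ as an iterated intersection in $D$ via the explicit mirror isomorphism of Theorem \ref{thm:main1} --- the analogue at higher pages of the $k=1$ calculation of Theorem \ref{thm:connectedness}. A possible alternative that bypasses the higher differentials uses Proposition \ref{div:pw}: under the (expected) equivalence between vanishing of the tropical divisor and vanishing in the $\FF_2$-Picard group, one could gauge $\varepsilon$ to a canonical sign distribution for which the patchworking manifestly separates into two pieces, and then verify via Proposition \ref{prop:realposethom} that the fundamental class of one such piece represents a second non-trivial element of $H_n(X_{T^\circ,T};\mathcal{S}_D)$ lifting $S$. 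In either case, once $E^\infty_{0,n}=\FF_2$ is established, $\dim H_n=\dim E^\infty_{0,n}+\dim E^\infty_{n,n}=2$, and the patchworking therefore has exactly two connected components.
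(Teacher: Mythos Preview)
Your overall strategy is sound: you correctly reduce the problem to showing that $S$ is a permanent cycle, and your treatment of the rest of row $n$ (vanishing of $E^1_{p,n}$ for $0<p<n$ under \eqref{vanishinghypothesis}, survival of the top class at $E^\infty_{n,n}$) is fine. The gap is in the main step. Your inductive scheme --- lift, correct by $\tilde\eta_1$, land in $\mathcal K_2$, repeat --- is the standard zigzag, but after the first correction you have no concrete mechanism for showing the resulting class in $E^2_{2,n-1}$ vanishes. The assertion that $\delta^{[k]}(S)$ ``corresponds to an iterated cup-product-type expression in $[D_{|X_{T,T^\circ}}]$'' is precisely the thing that would need to be proved, and you say yourself that you do not know how to formalize it. Your alternative via Proposition~\ref{div:pw} likewise rests on an unproven identification with the $\FF_2$-Picard group and on the existence of a canonical sign distribution with the desired separation property, neither of which is supplied.

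The paper avoids the inductive climb entirely. Its key observation is that one need not compute any higher $\delta^{[k]}$: it suffices to produce a lift $\tilde S\in C_n(\mathcal J(T^\circ,T);\mathcal S_D)$ whose boundary is \emph{exact already in $C_{n-1}(\mathcal K_1^D)$}, not just in $\mathcal F_1^{\FF_2}$; then $[\partial\tilde S]=0$ in $H_{n-1}(\mathcal K_1^D)$ and all $\delta^{[k]}(S)$ vanish at once. The hypothesis $\delta^{[1]}S=0$ is used exactly once, to write $\gamma_1=\partial\zeta$ with $\zeta=\sum(\tau,\rho)\otimes z_{(\tau,\rho)}\in C_n(\mathcal M_1^{\FF_2})$. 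From the coefficients $z_{(\tau,\rho)}$ one extracts a vector $v_\rho\in\mathcal E^D_{(\tau,\rho)}$ for each ray $\rho$ (namely $v_\rho=z_{(\tau,\rho)}+\tau$ or $z_{(\tau,\rho)}$ according to whether $\rho\in\supp D$), which turns out to be independent of $\tau$. The \emph{new} lift $\tilde S=\sum(\tau,\rho)\otimes w_{v_\rho}$ then has $\partial\tilde S$ supported only on cells $(\tau,\lambda)$ with $\tau$ a vertex of $\Delta^\circ$ and $\lambda_\infty\subset\inter\tau^\vee$, and an explicit chain $\alpha=\sum(\tau,\lambda_\infty)\otimes(w_{v_{\rho_1}}+w_{v_{\rho_2}})$ on the unbounded cells satisfies $\partial\alpha=\partial\tilde S$. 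This is a direct chain-level construction inside $\mathcal K_1^D$, with no iteration and no appeal to a mirror interpretation of higher obstructions.
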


\begin{proof} In the proof of Theorem \ref{thm:connectedness} we constructed a lift of $S$ as a chain $\tilde S \in C_n(\mathcal{J}(T^\circ,T);\mathcal{S}^{D})$. By taking its boundary we obtained a (closed) chain $\partial \tilde S \in C_{n-1}(\mathcal{J}(T^\circ,T);\mathcal{K}^{D}_1)$, defining a class $[\partial \tilde S] \in H_{n-1}(\mathcal{J}(T^\circ,T); \mathcal{K}^{D}_1)$. The fact that $\delta^{[1]}S = 0$ means that $[\partial \tilde S]$ maps to zero in $H_{n-1}(\mathcal{J}(T^\circ,T); \mathcal{F}^{\FF_2}_1)$, in particular it must be the image of a class $\alpha_S \in H_{n-1}(\mathcal{J}(T^\circ,T); \mathcal{K}^{D}_2)$. By definition, $\delta^{[2]} S$ is the image of $\alpha_S$ in $H_{n-1}(\mathcal{J}(T^\circ,T); \mathcal{F}^{\FF_2}_2)$. We will prove that 
\[ [\partial \tilde S] = 0 \ \ \text{in} \ \ H_{n-1}(\mathcal{J}(T^\circ,T);\mathcal{K}^{D}_1). \]
This will imply $\delta^{[k]} S = 0$ for all $k \geq 1$ and hence the statement of the theorem. We already proved that as a class in $H_{n-1}(\mathcal{J}(T^\circ,T);\mathcal{M}_1^{\FF_2})$, $\delta^{[1]} S $ is represented by the cycle

\[
\gamma_1=\sum_{\rho\in\text{supp}(D)}\sum_{
\tiny{\begin{array}{c}
(\delta,\rho)\in\mathcal{J}(T^\circ,T) \\ \dim(\delta)=1
\end{array}}} (\delta, \rho)\otimes (\tau_1 - \tau_2),
\]
where $\tau_1$ and $\tau_2$ are the vertices of $\delta$. Since $\delta^{[1]}S =0 $ this cycle must be exact. So there exists a chain 
\[ \zeta = \sum_{\rho \in \Sigma_{T}(1)} \sum_{
\tiny{\begin{array}{c}
(\tau,\rho)\in\mathcal{J}(T^\circ,T) \\ \dim(\tau)=0
\end{array}}}  (\tau, \rho) \otimes z_{(\tau, \rho)}\]
in $C_{n}(\mathcal{J}(T^\circ,T);\mathcal{M}_1^{\FF_2})$ such that 
\[ \partial \zeta = \gamma_1 \]
Notice that the coefficient $z_{(\tau, \rho)} \in \mathcal M_1(\tau, \rho) = \rho^{\perp}$. We then have that the $z_{(\tau, \rho)}$'s satisfy the following conditions 
\begin{itemize}
\item $z_{(\tau, \rho_1)}- z_{(\tau, \rho_2)} = 0 \mod \tau$, when $\rho_1$ and $\rho_2$ are edges of a two dimensional simplex $\lambda \in T$;
\item $z_{(\tau_1, \rho)}- z_{(\tau_2, \rho)} = \tau_1 - \tau_2$, when $\rho \in \supp D$ and $\tau_1, \tau_2$ are vertices of an edge $\delta \in \partial T^{\circ}$ such that $(\delta, \rho) \in \mathcal{J}(T^\circ,T)$;
\item $z_{(\tau_1, \rho)}- z_{(\tau_2, \rho)} = 0$, when $\rho \notin \supp D$ and $\tau_1, \tau_2$ are vertices of an edge $\delta \in \partial T^{\circ}$ such that $(\delta, \rho) \in \mathcal{J}(T^\circ,T)$.
\end{itemize}

For any $\rho \in \Sigma_T(1)$ and some zero dimensional $\tau \in T^{\circ}$ such that $(\tau, \rho) \in \mathcal{J}(T^\circ,T)$, define the following vector
\[ 
v_\rho = \begin{cases}
                       z_{(\tau, \rho)} + \tau \quad \text{if} \ \rho \in \supp D, \\
                       z_{(\tau, \rho)} \quad \text{if} \ \rho \notin \supp D. \\
                  \end{cases}
\]
The second and third bullet above imply that $v_\rho$ is independent of $\tau$, moreover since $z_{(\tau, \rho)} \in \rho^{\perp}$ and $\tau \notin \rho^{\perp}$, we have that $v_\rho \in \mathcal E^D_{(\tau, \rho)}$. We use this vector to define a new lift of $S$ in $C_n(\mathcal{J}(T^\circ,T);\mathcal{S}^{D})$
\[ \tilde S = \sum_{\rho \in \Sigma_T(1)}\sum_{
\tiny{\begin{array}{c}
(\tau,\rho )\in\mathcal{J}(T^\circ,T) \\ \dim(\tau)=0
\end{array}}} (\tau,\rho)\otimes w_{v_\rho}.
 \]
 Then the boundary of $\tilde S$, as a chain in $C_{n-1}(\mathcal{J}(T^\circ,T);\mathcal{K}^{D}_1)$ is given by
\begin{equation} \label{boundary_lift}
   \partial \tilde S =\sum_{
   \tiny{\begin{array}{c}
   (\tau, \lambda)\in\mathcal{J}^S(T^\circ,T) \\ \dim(\tau)=0\text{ and } \dim(\lambda)=2
\end{array}}} (\tau,\lambda)\otimes (w_{v_{\rho_1}}+w_{v_{\rho_2}} ),
\end{equation}
where $\rho_1$ and $\rho_2$ are the two generators of the cone $C(\lambda)$. Notice that there are no contributions from elements of type $(\delta, \rho)$, with $\dim \delta = 1$.  

Suppose that $\lambda \in T$ is a two dimensional simplex such that there are two distinct zero dimensional elements $\tau_1$ and $\tau_2$ such that $(\tau_1, \lambda)$ and $(\tau_2, \lambda)$ are both in $\mathcal{J}(T^\circ,T)$. It follows from the definition of $v_\rho$ that
\[ v_{\rho_j} = z_{(\tau_k, \rho_j)} \mod \tau_k \quad \text{for} \ j,k \in \{1,2 \} \]
and from the first bullet above it follows that 
\begin{equation} \label{v_modt}
	   v_{\rho_1} = v_{\rho_2} \mod \tau_k \quad \text{for} \ k \in \{1,2 \}. 
\end{equation}
Since $T^{\circ}$ is unimodular, we can assume that $\tau_1$ and $\tau_2$ are linearly independent over $\FF_2$, therefore we must have
\[ v_{\rho_1} = v_{\rho_2}. \]
In particular, in \eqref{boundary_lift}, only those $\lambda$'s for which there exists a unique zero dimensional $\tau$ such that $(\tau, \lambda) \in \mathcal{J}(T^\circ,T)$ can contribute with a non-zero coefficient. Observe also that the latter property holds if and only if $(\min \lambda)^\vee$ is zero dimensional and $\tau = (\min \lambda)^\vee$ is a vertex of $\Delta^{\circ}$. In this case $\lambda_{\infty}$ is an edge in the interior of $\tau^\vee$. Therefore we may rewrite \eqref{boundary_lift} in the following way 
\begin{equation*}
	\partial \tilde S = \sum_{\tau \in \operatorname{Vert}(\Delta^\circ)} \sum_{
		\tiny{\begin{array}{c}
				\lambda \in T, \dim(\lambda)=2 \\ \lambda_\infty \subset \inter{\tau^\vee}
	\end{array}}} (\tau,\lambda)\otimes (w_{v_{\rho_1}}+w_{v_{\rho_2}} ).
\end{equation*}

The next goal is to prove that $\partial \tilde S$ is an exact chain in $C_{n-1}(\mathcal{J}(T^\circ,T);\mathcal{K}^{D}_1)$. Indeed, define
\[ \alpha = \sum_{\tau \in \operatorname{Vert}(\Delta^\circ)} \sum_{
	\tiny{\begin{array}{c}
			\lambda \in T, \dim(\lambda)=2 \\ \lambda_\infty \subset \inter{\tau^\vee}
\end{array}}} (\tau,\lambda_{\infty})\otimes (w_{v_{\rho_1}}+w_{v_{\rho_2}} ). \]
Notice that $\alpha$ is supported on the infinite $n$-dimensional cells $F(\tau, \lambda_{\infty})$ which subdivide the cells $F(0, \lambda_{\infty}) \subset X_{T^\circ,T}$. We want to prove the following
\begin{itemize}
	\item $\alpha$ is a well defined chain in $C_{n}(\mathcal{J}(T^\circ,T);\mathcal{K}^{D}_1)$;
	\item $\partial \alpha = \partial \tilde S$.
\end{itemize}
The first bullet amounts to proving that $v_{\rho_1}$ and $v_{\rho_2}$ are both in $\mathcal E^D_{(\tau, \lambda_{\infty})}$, so that the coefficients of $\alpha$ are in $\mathcal K^D_1$. Recall that \eqref{v_modt} holds (with $\tau = \tau_k$), so either $v_{\rho_1}= v_{\rho_2}$, in which case the coefficient of $(\tau, \lambda_\infty)$ is zero, or $v_{\rho_1}= v_{\rho_2} + \tau$. Assume the latter holds. 
Since $\lambda_{\infty} \subset \tau^\vee$, we have $\tau \in \lambda_\infty^\perp$, hence either both  $v_{\rho_1}$ and $v_{\rho_2}$ are in $\mathcal E^D_{(\tau, \lambda_{\infty})}$ or both are not. The cells $(\tau, \lambda_\infty)$, $(\tau, \rho_1)$, $(\tau, \rho_2)$ are the three $n$ dimensional cells intersecting along the $(n-1)$-dimensional cell $(\tau, \lambda)$. We already know that $v_{\rho_1} \in \mathcal E^D_{(\tau, \rho_1)}$, therefore, using the properties of a real phase structure, it is enough to prove that $v_{\rho_1} \notin \mathcal E^D_{(\tau, \rho_2)}$ to conclude that $v_{\rho_1} \in \mathcal E^D_{(\tau, \lambda_{\infty})}$, and hence also $v_{\rho_2}$. We know that $v_{\rho_2} \in \mathcal E^D_{(\tau, \rho_2)}$ and that $\tau \notin \rho_2^\perp$, hence we must have that $v_{\rho_1}= v_{\rho_2} + \tau$ cannot be in $\mathcal E^D_{(\tau, \rho_2)}$. This proves the first of the two bullets. 

Let us now compute $\partial \alpha$. We have 
\[ \begin{split}
	 \partial \alpha  = \sum_{\tau \in \operatorname{Vert}(\Delta^\circ)}    & \sum_{
	        \tiny{\begin{array}{c}
			           \lambda \in T, \dim(\lambda)=2 \\ \lambda_\infty \subset \inter{\tau^\vee}
                  \end{array}}} (\tau,\lambda )\otimes (w_{v_{\rho_1}}+w_{v_{\rho_2}} ) + \\
               \     & +  \sum_{
               	\tiny{\begin{array}{c}
               			\lambda \in T, \dim(\lambda)=2 \\ \lambda_\infty \subset \inter{\tau^\vee}
               \end{array}}} (\hat \tau,\lambda_{\infty})\otimes (w_{[v_{\rho_1}]}+w_{[v_{\rho_2}]} ) + \\
           \     & + \sum_{
           	\tiny{\begin{array}{c}
           			\sigma \in T, \dim(\sigma)=3 \\ \sigma_\infty \subset \inter \tau^\vee
           \end{array}}} (\tau,\sigma_{\infty})\otimes \beta_{(\tau, \sigma)},
  \end{split}
\]
where $[v_{\rho_j}]$ is the image of $v_{\rho_j}$ under the projection $\lambda_{\infty}^{\perp} \rightarrow \frac{\lambda_{\infty}^{\perp}}{\langle \tau \rangle}$. The coefficient $\beta_{(\tau, \sigma)}$ in the above formula is defined as follows. The three dimensional simplex $\sigma$ has three boundary facets $\lambda_1, \lambda_2$ and $\lambda_3$ containing the origin, therefore 
\[ \beta_{(\tau, \sigma)} = \alpha_{(\tau,\lambda_{1, \infty})} +  \alpha_{(\tau,\lambda_{2, \infty})}  +  \alpha_{(\tau,\lambda_{3, \infty})} \]
where $\alpha_{(\tau,\lambda_{j, \infty})}$ denotes the coefficient of $\alpha$ relative to the element $(\tau, \lambda_{j, \infty})$. 

Since $v_{\rho_1} = v_{\rho_2} \mod \tau$, we have that 
\[ w_{[v_{\rho_1}]}+w_{[v_{\rho_2}]} = 0.  \]
Let us now prove that $ \beta_{(\tau, \sigma)} = 0$. Label the edges and vertices of $\sigma_{\infty}$ as in Figure \ref{triangle}. If all three edges are in $\inter \tau^\vee$ we have
\[ \beta_{(\tau, \sigma)} = w_{v_{\rho_2}}+w_{v_{\rho_3}} +  w_{v_{\rho_3}}+w_{v_{\rho_1}} + w_{v_{\rho_1}}+w_{v_{\rho_2}} = 0. \]

\begin{figure}[!ht] 
\begin{center}
%% Creator: Inkscape inkscape 0.92.4, www.inkscape.org
%% PDF/EPS/PS + LaTeX output extension by Johan Engelen, 2010
%% Accompanies image file '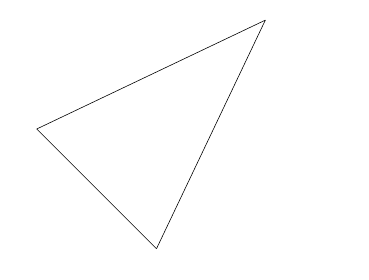' (pdf, eps, ps)
%%
%% To include the image in your LaTeX document, write
%%   \input{<filename>.pdf_tex}
%%  instead of
%%   \includegraphics{<filename>.pdf}
%% To scale the image, write
%%   \def\svgwidth{<desired width>}
%%   \input{<filename>.pdf_tex}
%%  instead of
%%   \includegraphics[width=<desired width>]{<filename>.pdf}
%%
%% Images with a different path to the parent latex file can
%% be accessed with the `import' package (which may need to be
%% installed) using
%%   \usepackage{import}
%% in the preamble, and then including the image with
%%   \import{<path to file>}{<filename>.pdf_tex}
%% Alternatively, one can specify
%%   \graphicspath{{<path to file>/}}
%% 
%% For more information, please see info/svg-inkscape on CTAN:
%%   http://tug.ctan.org/tex-archive/info/svg-inkscape
%%
\begingroup%
  \makeatletter%
  \providecommand\color[2][]{%
    \errmessage{(Inkscape) Color is used for the text in Inkscape, but the package 'color.sty' is not loaded}%
    \renewcommand\color[2][]{}%
  }%
  \providecommand\transparent[1]{%
    \errmessage{(Inkscape) Transparency is used (non-zero) for the text in Inkscape, but the package 'transparent.sty' is not loaded}%
    \renewcommand\transparent[1]{}%
  }%
  \providecommand\rotatebox[2]{#2}%
  \newcommand*\fsize{\dimexpr\f@size pt\relax}%
  \newcommand*\lineheight[1]{\fontsize{\fsize}{#1\fsize}\selectfont}%
  \ifx\svgwidth\undefined%
    \setlength{\unitlength}{177.06417414bp}%
    \ifx\svgscale\undefined%
      \relax%
    \else%
      \setlength{\unitlength}{\unitlength * \real{\svgscale}}%
    \fi%
  \else%
    \setlength{\unitlength}{\svgwidth}%
  \fi%
  \global\let\svgwidth\undefined%
  \global\let\svgscale\undefined%
  \makeatother%
  \begin{picture}(1,0.75727104)%
    \lineheight{1}%
    \setlength\tabcolsep{0pt}%
    \put(0,0){\includegraphics[width=\unitlength,page=1]{triangle.pdf}}%
    \put(0.2567298,0.57475089){\color[rgb]{0,0,0}\makebox(0,0)[lt]{\lineheight{1.25}\smash{\begin{tabular}[t]{l}$\lambda_{1, \infty}$\end{tabular}}}}%
    \put(0.1412034,0.19180225){\color[rgb]{0,0,0}\makebox(0,0)[lt]{\lineheight{1.25}\smash{\begin{tabular}[t]{l}$\lambda_{2, \infty}$\end{tabular}}}}%
    \put(0.59475155,0.36081311){\color[rgb]{0,0,0}\makebox(0,0)[lt]{\lineheight{1.25}\smash{\begin{tabular}[t]{l}$\lambda_{3, \infty}$\end{tabular}}}}%
    \put(0.4064862,0.00995511){\color[rgb]{0,0,0}\makebox(0,0)[lt]{\lineheight{1.25}\smash{\begin{tabular}[t]{l}$\rho_1$\end{tabular}}}}%
    \put(0.74022923,0.71381046){\color[rgb]{0,0,0}\makebox(0,0)[lt]{\lineheight{1.25}\smash{\begin{tabular}[t]{l}$\rho_2$\end{tabular}}}}%
    \put(-0.00427436,0.39076439){\color[rgb]{0,0,0}\makebox(0,0)[lt]{\lineheight{1.25}\smash{\begin{tabular}[t]{l}$\rho_3$\end{tabular}}}}%
  \end{picture}%
\endgroup%

\caption{The boundary of $\lambda_{\infty}$ }
\label{triangle}
\end{center}
\end{figure}

If only one of the edges, say $\lambda_{1, \infty}$, is in the boundary of $\tau^{\vee}$, it means that there exists another $\tau'$ such that $(\tau', \lambda_1) \in \mathcal{J}(T^\circ,T)$, but then $\alpha_{(\tau, \lambda_{1,\infty})} =0$ and
\[ v_{\rho_2} = v_{\rho_3} \] therefore 
\[ \beta_{(\tau, \sigma)} =  w_{v_{\rho_3}}+w_{v_{\rho_1}} + w_{v_{\rho_1}}+w_{v_{\rho_2}} = 0. \]
Similarly,  $\beta_{(\tau, \sigma)} =0$ when two of the edges are in the boundary of $\tau^{\vee}$. This proves that $\partial \alpha = \partial \tilde S$ and concludes the proof of the theorem. \end{proof}

\begin{ex} Let $D = \sum_{a_\rho} D_\rho$ be supported in the interior of the facets of $\Delta$, then $D_{|X_{T,T^{\circ}}} = 0$, since divisors in the interior of maximal faces are mapped to vertices by the blowdown map $\operatorname{Bl}: \T\Sigma_{T} \rightarrow \T\Sigma_{\Delta^{\circ}}$. In particular $X_{T^\circ, T, \epsilon}$ will have two connected components. 
\end{ex}

\begin{ex} Consider the divisors $D= D_7 + D_8$ and $D= D_8$ as in Figure \ref{pw_divisors}. In the first case $D_{|X_{T,T^{\circ}}}$ is one point, in the second $D_{|X_{T,T^{\circ}}} = 0$ (see Figure \ref{div_conn}). The corresponding patchworkings are depicted in Figure \ref{pw_divisors} and they are respectivley connected and disconnected. 

\begin{figure}[!ht] 
\begin{center}
\includegraphics{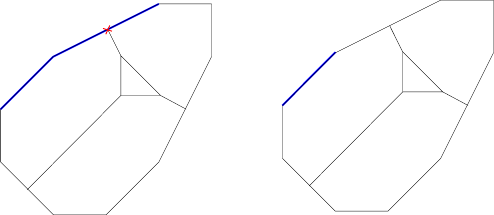}
\caption{The intersection of $X_{T, T^\circ}$ with two different divisors.}
\label{div_conn}
\end{center}
\end{figure}
\end{ex}

\begin{ex} Consider the two reflexive polytopes in Figure \ref{pw_divisors_2}, where $T$ is on the left. We have drawn an example of patchworking and the corresponding divisor. The divisor $D_{|X_{T,T^{\circ}}}$ consists of two points and therefore it is zero ($\mod 2$). Indeed any patchworking in this example will be disconnected. 
\begin{figure}[!ht] 
\begin{center}
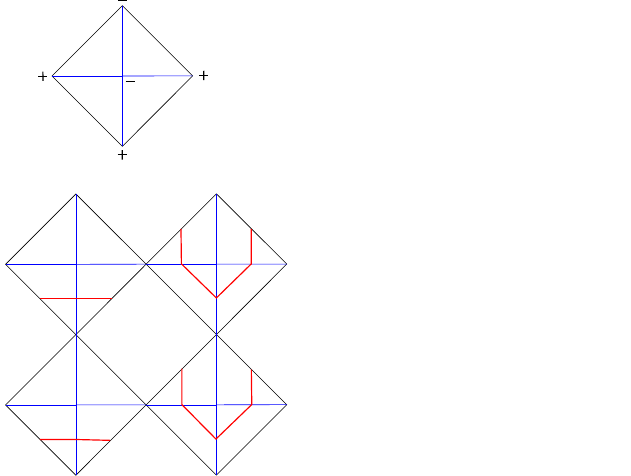
\caption{Two dual reflexive polygons. A divisor $D$ in the mirror and the corresponding disconnected patchworking. The divisor intersects the mirror curve in two points, i.e. $D_{|X_{T,T^{\circ}}}=0$. }
\label{pw_divisors_2}
\end{center}
\end{figure}
\end{ex}

\vspace{1cm}
\begin{flushleft}
Diego MATESSI \\
Dipartimento di Matematica \\
Universit\`a degli Studi di Milano \\
Via Saldini 50 \\
I-20133 Milano, Italy \\
E-mail address: \email{diego.matessi@unimi.it}
\end{flushleft}
\vspace{1cm}
\begin{flushleft}
Arthur RENAUDINEAU \\
Univ. Lille, CNRS, UMR 8524 - Laboratoire Paul Painlev\'e \\
F-59000 Lille, France\\
E-mail address: \email{arthur.renaudineau@univ-lille.fr}
\end{flushleft}


\begin{thebibliography}{10}

\bibitem{arg_princ:realCY}
{Arg\"{u}z, H\"{u}lya and Prince, Thomas}.
\newblock {Real {L}agrangians in {C}alabi-{Y}au threefolds}.
\newblock {\em {Proc. Lond. Math. Soc. (3)}}, {121}({2}):{287--311}, {2020}.
\newblock \href{https://arxiv.org/abs/1908.06685}{arXiv:1908.06685}.

\bibitem{lefschetz:ARS}
{Arnal, Charles and Renaudineau, Arthur and Shaw, Kris}.
\newblock {Lefschetz section theorems for tropical hypersurfaces}.
\newblock {\em {Ann. H. Lebesgue}}, {4}:{1347--1387}, {2021}.
\newblock \href{https://arxiv.org/abs/1907.06420}{arXiv:1907.06420}.

\bibitem{BB}
{Batyrev, Victor and Borisov, Lev}.
\newblock {Mirror duality and string-theoretic Hodge numbers}.
\newblock {\em Invent. Math. {\bf 126}}, pages 183--203, 1996.
\newblock \href{https://arxiv.org/abs/alg-geom/9509009}{arXiv:alg-geom/9509009}

\bibitem{batyr:integral}
{Batyrev, Victor and Kreuzer, Maximilian}.
\newblock {Integral cohomology and mirror symmetry for {C}alabi-{Y}au 3-folds}.
\newblock In {\em {Mirror symmetry. {V}}}, volume~{38} of {\em {AMS/IP Stud.
  Adv. Math.}} {Amer. Math. Soc., Providence, RI}, {2006}.
\newblock \href{https://arxiv.org/abs/math/0505432}{arXiv:math/0505432}

\bibitem{batyr:dual_pol}
{Batyrev, Victor V.}
\newblock {Dual polyhedra and mirror symmetry for {C}alabi-{Y}au hypersurfaces
  in toric varieties}.
\newblock {\em {J. Algebraic Geom.}}, {3}({3}):{1056--3911}, {1994}.
\newblock \href{https://arxiv.org/abs/alg-geom/9310003}{arXiv:alg-geom/9310003}

\bibitem{bihan2006}
{Bihan, Fr{\'e}d{\'e}ric and Franz, Matthias and McCrory, Clint and van Bihan, Joost}.
\newblock Is every toric variety an m-variety?
\newblock {\em Manuscripta mathematica}, 120(2):217--232, 2006.
\newblock \href{https://arxiv.org/abs/math/0510228}{arXiv:math/0510228}
\bibitem{borisov:comp_int}
{Borisov, Lev}.
\newblock {Towards the mirror symmetry for {C}alabi-{Y}au complete
  intersections in {G}orenstein toric Fano varieties}.
\newblock \href{http://arxiv.org/abs/alg-geom/9310001}{arXiv:alg-geom/9310001}.

\bibitem{brugalle2022combinatorial}
{Brugall{\'e}, Erwan and L{\'o}pez de Medrano, Luc{\'\i}a and Rau, Johannes}.
\newblock Combinatorial patchworking: back from tropical geometry.
\newblock \href{https://arxiv.org/abs/2209.14043}{arXiv:2209.14043}, 2022.

\bibitem{BIMS}
{Brugall{\'e}, Erwan and Itenberg, Ilia and Mikhalkin, Grigory and  Shaw, Kris.}
\newblock Brief introduction to tropical geometry.
\newblock {\em Proceedings of the Go{\"o}kova Geometry-Topology Conference},
  pages 1--75, 2014.
\newblock \href{https://arxiv.org/abs/1502.05950}{arXiv:1502.05950}


\bibitem{CBMS}
{Castano-Bernard, Ricardo and Matessi, Diego and Solomon, Jake}.
\newblock {Symmetries of Lagrangian Fibrations}.
\newblock {\em Adv. in Math.}, 225(3):1341--1386, 2010.
\newblock \href{https://arxiv.org/abs/0908.0966}{arXiv:0908.0966}

\bibitem{cox:little:schenck}
{Cox, David and Little, John and Schenck, Hal}.
\newblock {\em {Toric varieties}}, volume 124 of {\em {Graduate Studies in
  Mathematics}}.
\newblock {American Mathematical Society, Providence, RI}, 2011.

\bibitem{cox:katz}
{Cox, David and Katz, Sheldon}.
\newblock {\em {Mirror symmetry and algebraic geometry}}.
\newblock {Mathematical Surveys and Monographs}. {American Mathematical
  Society, Providence, RI}, {1999}.

\bibitem{fulton:toric}
{Fulton, William}.
\newblock {\em {Introduction to toric varieties}}, volume 131 of {\em {Annals
  of Mathematics Studies}}.
\newblock {Princeton University Press, Princeton, NJ}, 1993.

\bibitem{G-Siebert2003}
{Gross, Mark and Siebert, Bernd}.
\newblock {Mirror Symmetry via Logarithmic degeneration data I}.
\newblock {\em J. Differential Geom.}, 72(2):169--338, 2006.
\newblock \href{http://arxiv.org/abs/math.AG/0309070}{math.AG/0309070}.

\bibitem{trop_hom}
{Itenberg, Ilia and Katzarkov, Ludmil and Mikhalkin, Grigory and Zharkov, Ilia}
\newblock Tropical {H}omology.
\newblock {\em {Math. Ann.}}, {374}({1-2}):{963--1006}, {2019}.
\newblock \href{https://arxiv.org/abs/1604.01838}{arXiv:1604.01838}.

\bibitem{itenberg1995}
{Itenberg, Ilia}
\newblock Counter-examples to {R}agsdale conjecture and T-curves.
\newblock {\em Contemporary Mathematics}, 182:55--55, 1995.

\bibitem{matessi:RealCY}
{Matessi, Diego}.
\newblock {On real {C}alabi-{Y}au threefolds twisted by a section}.
\newblock {\em {J. Lond. Math. Soc. (2)}}, {109}({1}):{Paper No. e12845, 35},
  {2024}.
\newblock \href{https://arxiv.org/abs/2302.05357}{arXiv:2302.05357}.

\bibitem{mik_rau:trop}
{Mikhalkin, Grigory and Rau, Johannes}.
\newblock {Tropical Geometry}.
\newblock
  \href{https://math.uniandes.edu.co/~j.rau/downloads/main.pdf}{Johannes Rau's
  web page}.

\bibitem{ren_shaw:bound}
{Renaudineau, Arthur and Shaw, Kris}.
\newblock {Bounding the {B}etti numbers of real hypersurfaces near the tropical
  limit}.
\newblock {\em {Ann. Sci. \'{E}c. Norm. Sup\'{e}r. (4)}}, {56}({3}):{945--980},
  {2023}.
\newblock \href{https://arxiv.org/abs/1805.02030}{arXiv:1805.02030}.

\bibitem{viro}
{Viro, Oleg}.
\newblock {Patchworking real algebraic varieties}.
\newblock \href{https://arxiv.org/abs/math/0611382}{arXiv:math/0611382}

\bibitem{viro_patch}
{Viro, Oleg}.
\newblock Gluing of plane real algebraic curves and constructions of curves of
  degrees {$6$} and {$7$}.
\newblock In {\em Topology ({L}eningrad, 1982)}, volume 1060 of {\em Lecture
  Notes in Math.}, pages 187--200. Springer, Berlin, 1984.

\bibitem{yam:trop_contr}
{Yamamoto, Yuto}.
\newblock {Tropical contractions to integral affine manifolds with
  singularities}.
\newblock \href{https://arxiv.org/abs/2105.10141}{arXiv:2105.10141}, {2021}.

\end{thebibliography}
\end{document}